\documentclass[11pt]{amsart}

\usepackage{
	amsmath,
	amsfonts,
	amssymb,
	amsthm,
	amscd,
	comment,
	enumitem,
	etoolbox,
	gensymb,
	mathtools,
	mathdots,
	stmaryrd,
}
\usepackage[usenames,dvipsnames]{xcolor}
\usepackage{todonotes}
\usepackage[all]{xy}
\usepackage[hyphens]{url}
\usepackage[hyphenbreaks]{breakurl}
\usepackage{xparse}
\usepackage{tikz-cd}
\usepackage{tikz, tikz-3dplot, pgfplots}
\usepackage{tkz-graph}

\usepackage[utf8]{inputenc}  
\usepackage{bbm}
\usepackage[colorlinks=true, linkcolor=blue, citecolor=blue, urlcolor=blue, breaklinks=true]{hyperref}

\usepackage[capitalize]{cleveref} 
\crefname{defi}{Definition}{Definitions}
\crefname{exam}{Example}{Examples}
\crefname{egs}{Example}{Examples}
\crefname{lem}{Lemma}{Lemmas}
\crefname{prob}{Proposition}{Propositions}
\crefname{theo}{Theorem}{Theorems}
\crefname{equation}{}{}
\crefname{enumi}{}{}

\newcommand{\bbC}{\mathbb{C}}

\newcommand{\bbZ}{\mathbb{Z}}

\newcommand{\calC}{\mathcal{C}}

\newcommand{\ob}{\mathrm{ob}}
\newcommand{\id}{\mathrm{id}}

\newcommand{\bm}{\begin{bmatrix}}
	\newcommand{\nm}{\end{bmatrix}}

\usepackage{tikz}
\usetikzlibrary{arrows.meta}
\usetikzlibrary{decorations.markings}
\usetikzlibrary{calc}

\tikzset{anchorbase/.style={>=To,baseline={([yshift=-0.5ex]current bounding box.center)}}}
\tikzset{ 
		centerzero/.style={>=To,baseline={([yshift=-0.5ex](#1))}},
		centerzero/.default={0,0}
	}
	\tikzset{wipe/.style={white,line width=3pt}}
	
	\tikzset{gmod/.style={ForestGreen, thick}}
	\tikzset{rmod/.style={red, thick}}
	\tikzset{blmod/.style={blue, thick}}
	\tikzset{bmod/.style={black, thick}}
	\tikzset{gmod>/.style={->, thick, ForestGreen}}
	\tikzset{<gmod/.style={<-, thick, ForestGreen}}
	
	\newcommand\dotlabel[1]{$\scriptstyle{#1}$}

	\newcommand\btoken[4][0px]{
		\filldraw[black] (#2) circle (1.5pt) node[anchor=#3] {\raisebox{#1}{\dotlabel{#4}}}
	}

	\newcommand{\dcap}{
		\begin{tikzpicture}[anchorbase]
			\draw[bmod] (0,0) -- (0,0.125) arc (180:0:0.25) -- (0.5,0);
		\end{tikzpicture}	
	}
	\newcommand{\dcup}{
		\begin{tikzpicture}[anchorbase]
			\draw[bmod] (0,0.125) -- (0,0) arc (180:360:0.25) -- (0.5,0.125);
		\end{tikzpicture}	
	}
	\newtheorem{theo}{Theorem}[section]
	\theoremstyle{definition}
	\newtheorem{defi}[theo]{Definition}
	
	\newtheorem{lem}[theo]{Lemma}
	\newtheorem{theor}[theo]{Theorem}
	\newtheorem{coro}[theo]{Corollary}
	\newtheorem{rmk}[theo]{Remark}
	\newtheorem{exam}[theo]{Example}
	
	\newtheorem{prop}[theo]{Proposition}

	\numberwithin{equation}{section}
	\allowdisplaybreaks

	\setenumerate[1]{label=(\alph*)}
	
	\usepackage{environ}
	\newcommand{\comsize}{3 in}

	\NewEnviron{commeqn}[1][2.5 in]{\renewcommand{\comsize}{#1} \begin{align*} \BODY \end{align*}}
	\graphicspath{{/}}
	
	\newtoggle{comments}
	\newtoggle{details}
	\newtoggle{detailsnote}
	
	\iftoggle{comments}{%
		\newcommand{\scomments}[1]{
			\ \\
			{\color{Violet}
				\textbf{SSS:} #1
			}
			\ \\
		}
	}{%
		\newcommand{\scomments}[1]{}
	}
	
	\iftoggle{details}{%
		\newcommand{\details}[1]{
			\ \\
			{\color{OliveGreen}
				\textbf{Details:} #1
			}
			\\
		}
	}{%
		\newcommand{\details}[1]{}
	}
	
\begin{document}
		
		\title{Categorical Lie-Rinehart modules and Shen-Larsson functors}
		\author{Han Dai}
		\address{School of Mathematics, Hefei University of Technology, Hefei, 230000, Anhui, P. R. China}
		\email{dh2718@mail.ustc.edu.cn}
		\address{Shenzhen International Center for Mathematics, Southern University of Science and Technology, Shenzhen, 518055, Guangdong, P. R. China}
		\author{Vyacheslav Futorny}
		\email{futorny@sustech.edu.cn}
		\author{Huimin Gao}
		\address{Shenzhen International Center for Mathematics, Southern University of Science and Technology, Shenzhen, 518055, Guangdong, P. R. China}
		\email{12431006@mail.sustech.edu.cn}
		\subjclass{17B10, 18M05, 18D25}
		
		\keywords{Lie-Rinehart monoid, Monoidal category, Module category}
		
		\maketitle
		\thispagestyle{empty}

\begin{abstract}
We develop a categorical framework for Lie-Rinehart monoids and their weak modules in a symmetric monoidal category. Using crossed
homomorphisms, we construct a natural action of the monoidal category of modules over a Lie monoid on the category of weak Lie-Rinehart modules, thereby obtaining categorical versions of the Shen-Larsson functors. We further characterize the conditions under which the category of weak modules admits a monoidal structure and identify the corresponding condition for the associated functors to be strict monoidal. A dual theory for Lie-Rinehart comonoids and weak comodules is developed using cocrossed homomorphisms. Combining the module and comodule constructions, we obtain a bimodule category structure on the category of weak modules. Finally, we specialize the general framework to the symmetric monoidal category of super vector spaces, recovering Lie-Rinehart superalgebras and their associated Shen-Larsson-type constructions. 
\end{abstract}
		
\section{Introduction}
Since the foundational work of B\'enabou \cite{Ben63}, Mac Lane \cite{Mac63,Mac98}, and Kelly \cite{Kel64}, monoidal categories have become fundamental in many areas of algebra and representation theory. 
A basic example arises from Hopf algebras: the category of modules over a Hopf algebra carries a natural monoidal structure induced by the coproduct. 
More generally, many familiar algebraic objects admit intrinsic categorical analogues. 
In particular, Majid \cite{Maj93,Maj94} developed the theory of algebras, coalgebras, bialgebras, and Hopf algebras in braided monoidal categories. 
This point of view makes it possible to separate arguments that depend only on tensor categorical structures from those that are specific to vector spaces, and therefore provides a natural setting for extending classical algebraic constructions to graded, braided, and other monoidal contexts.

Lie-Rinehart algebras form another class of algebraic structures, which was introduced by Rinehart \cite{Rin63}. More explicitly, a Lie-Rinehart algebra consists of a commutative associative algebra $A$, a Lie algebra $L$, an $A$-module structure on $L$, and an anchor map from $L$ to the Lie algebra of derivations of $A$, subject to suitable compatibility conditions. 
Thus, Lie-Rinehart algebras simultaneously encode algebraic and Lie theoretic information. 
Two standard examples are $\big(A,\mathrm{Der}(A)\big)$
for a commutative algebra $A$, and $\big(C^{\infty}(M),\mathrm{Vect}(M)\big)$
for a smooth manifold $M$. 
The latter may be viewed as the algebraic prototype of the tangent Lie algebroid of $M$. 
In this sense, Lie-Rinehart algebras provide a common language for structures appearing in commutative algebra, differential geometry, and representation theory.

As in the case of Lie algebras, one can associate a universal enveloping algebra with a Lie-Rinehart algebra. 
Rinehart \cite{Rin63} and Huebschmann \cite{Hue90} introduced this construction, and subsequent developments clarified the relationship between Lie-Rinehart modules and modules over the corresponding enveloping algebra. For more details about  Lie-Rinehart algebras, see \cite{Hue04,Hue21,BKS24,CLP04,Sar22,BEMS}. 

Representation theory of Lie-Rinehart algebras therefore naturally combines features of both associative algebra and Lie algebra representations. 
A particularly useful notion in this context is that of a \emph{weak module}, in which the associative and Lie actions satisfy a compatibility relation analogous to the Leibniz rule.
An important source of weak modules is closely related to the Shen-Larsson construction. 
The Shen-Larsson functor, originating in the work of Shen \cite{She86} and Larsson \cite{Lar92}, {turns modules over a Lie algebra into representations of vector field Lie algebras}.
Besides connecting different representation categories, this construction has proved useful for producing and studying new classes of irreducible modules. 
{Specifically, the construction combines a representation on one tensor factor with an existing module structure on the other to give a new representation}.

Recently, Pei, Sheng, Tang, and Zhao \cite{PSTZ23} reformulated this phenomenon in terms of Lie-Rinehart algebras and crossed homomorphisms. 
They showed that a crossed homomorphism gives rise to natural Shen-Larsson-type functors and, more conceptually, that the category of weak modules over a Lie-Rinehart algebra carries the structure of a left module category over a suitable monoidal category of Lie algebra representations. 
This interpretation suggests that the Shen-Larsson construction is not confined to ordinary vector spaces. Rather, its constituents including Lie objects, associative algebra objects, module objects, tensor products, and crossed homomorphisms, are all categorical in nature.

The aim of the present paper is to develop this observation systematically. 
Let $\mathcal C$ be a linear symmetric monoidal category. 
We introduce the notion of a \emph{Lie-Rinehart monoid} in $\mathcal C$, in which the commutative algebra and Lie algebra occurring in the classical definition are replaced by a commutative monoid and a Lie monoid, respectively. 
We also introduce the corresponding category ${\bf WMod}^{\mathcal C}_A(L)$ of weak modules. 
The defining compatibility conditions are formulated intrinsically in $\mathcal C$ and can be expressed conveniently by string diagrams. 
When $\mathcal C=\mathcal{V}ec(\mathbb C)$, these notions reduce to the usual Lie-Rinehart algebra and its weak modules.

Our first main result concerns crossed homomorphisms. 
Suppose $(A,L)$ is a Lie-Rinehart monoid, $\mathfrak L$ is a Lie monoid in $\mathcal C$, and $\mathcal H:L\rightarrow \mathfrak L\otimes A$ is a crossed homomorphism. 
We show that every $\mathfrak L$-module $V$ and every weak $(A,L)$-module $M$ canonically determine a new weak $(A,L)$-module on $V\otimes M$. 
More precisely, this construction defines a bifunctor
\[
{\bf Mod}_{\mathcal C}(\mathfrak L)
\times
{\bf WMod}^{\mathcal C}_A(L)
\rightarrow
{\bf WMod}^{\mathcal C}_A(L),
\]
and turns ${\bf WMod}^{\mathcal C}_A(L)$ into a strict left module category over the monoidal category ${\bf Mod}_{\mathcal C}(\mathfrak L)$. 
Equivalently, the crossed homomorphism $\mathcal H$ gives rise to a strict monoidal functor
\[
\mathcal{F}: {\bf Mod}_{\mathcal C}(\mathfrak L)
\rightarrow
\operatorname{End}\big({\bf WMod}^{\mathcal C}_A(L)\big).
\]
This provides a categorical form of the Shen-Larsson construction and recovers the construction of \cite{PSTZ23} in the ordinary vector space case.

We next investigate when the category of weak modules itself admits a monoidal structure. 
Under suitable additional compatibility assumptions on the Lie-Rinehart monoid $(A,L)$, we show that the tensor product in $\mathcal C$ induces a monoidal product on
${\bf WMod}^{\mathcal C}_A(L)$. 
Once this monoidal structure is available, it is natural to ask whether the Shen-Larsson-type endofunctors induced by crossed homomorphisms preserve it. 
We give a necessary and sufficient condition for the corresponding functor
\[
\mathcal F_{\mathcal H}:
{\bf WMod}^{\mathcal C}_A(L)
\rightarrow
{\bf WMod}^{\mathcal C}_A(L)
\]
to be strict monoidal. 
Thus, besides producing new weak modules, crossed homomorphisms can in suitable situations generate monoidal endofunctors of the weak-module category.

We also develop a dual version of the theory. 
In a rigid symmetric monoidal category, we introduce Lie-Rinehart comonoids, weak comodules, and cocrossed homomorphisms. 
A cocrossed homomorphism induces a right action of a monoidal category of Lie comonoid comodules on the category of weak comodules. 
More precisely, the category ${\bf WCMod}^{\mathcal C}_A(L)$
is shown to be a strict right module category over the corresponding category of comodules. 
Combining the monoid and comonoid constructions leads naturally to Lie-Rinehart double monoids and weak double modules. 
For these objects, the left and right actions are compatible and endow the category of weak double modules with a strict bimodule category structure.

Finally, we specialize the general constructions to familiar symmetric monoidal categories. 
For $\mathcal C=\mathcal{V}ec(\mathbb C)$, our framework recovers the corresponding results for ordinary Lie-Rinehart algebras. 
For the symmetric monoidal category
$s\mathcal{V}ec(\mathbb C)$ of $\mathbb Z_2$-graded vector spaces, the categorical definitions yield Lie-Rinehart superalgebras and their weak modules. 
In particular, crossed homomorphisms of Lie superalgebras give rise to Shen-Larsson-type constructions in the super setting. 
{These examples show that the categorical framework is flexible enough to handle the classical and super cases uniformly}.

The paper is organized as follows. 
In Section~\ref{pre}, we recall the basic notions concerning linear symmetric monoidal categories, monoids, comonoids, bimonoids, and Lie monoids. 
We also introduce the graphical notation used throughout the paper. 
In Section~\ref{secoflierinehart}, we define Lie-Rinehart monoids and weak modules and develop their basic properties. 
We then introduce crossed homomorphisms and prove that they induce a left module-category structure on the category of weak modules. 
We further study monoidal structures on weak module categories and characterize when the associated Shen-Larsson-type functors are strict monoidal. 
In the rigid setting, we develop the dual theory of Lie-Rinehart comonoids, weak comodules, and cocrossed homomorphisms, and then combine the two constructions to obtain bimodule categories of weak double modules. 
In Section~\ref{app}, we apply the general theory to vector spaces and super vector spaces. 
In the Appendix, we recall the graphical calculus for monoidal categories used in the proofs.

\textbf{Conventions}. 
Throughout the paper, all vector spaces, associative (super)algebras, coalgebras, bialgebras, and Lie (super)algebras are assumed to be defined over $\mathbb C$.

\section{Monoidal categories}\label{pre}
In this section we review some basic concepts and results concerning monoidal categories. We also introduce the categorical counterparts of algebras, coalgebras, bialgebras, and Lie algebras. For more details, see \cite{Mac98,EGNO15}.
		
Let $\mathcal{C}$ be a category and $\otimes$ be a covariant functor from $\calC \times \calC$ to $\calC,$ i.e., for any $V,W\in \mathrm{ob}(\calC),$ we have the tensor product $V\otimes W \in \mathrm{ob}(\calC)$ of $V$ and $W$. Given morphisms $f: V \rightarrow V'$ and $g: W\rightarrow W'$ of $\calC$, there exists a morphism $f\otimes g: V\otimes W \rightarrow V'\otimes W'$. Moreover, we have 
\begin{align}\label{defoftensor}
	\mathrm{id}_{V\otimes W} = \mathrm{id}_V \otimes \mathrm{id}_W \ \  \mathrm{and} \ \ (f \otimes g) \circ (f'\otimes g')= (f\circ f') \otimes (g\circ g'),   
\end{align}
whenever the compositions are defined. 
An associativity constraint for $\otimes $ is a natural isomorphism 
\[a: \otimes \circ (\otimes \times \mathrm{id}) \rightarrow \otimes \circ ( \mathrm{id} \times \otimes). \]
This means that for any $U,V,W\in \mathrm{ob}(\calC)$, there exists a family of natural isomorphisms
\[a_{U,V,W}: (U\otimes V)\otimes W \rightarrow U\otimes (V\otimes W). \]
Fix an object $I \in \ob(\calC)$. A left unit constraint (resp. a right unit constraint) with respect to $I$ is a natural isomorphism 
\[l:\otimes \circ (I\times \id) \rightarrow \id \ \ (\mathrm{resp.} \ \ r: \otimes \circ (\id \times I) \rightarrow \id).  \]
This means that there exists a family of natural isomorphisms
\[l_V: I\otimes V \rightarrow V \ \ (\mathrm{resp.} \ r_V:V\otimes I \rightarrow V),  \]
for any $V\in \ob(\calC)$. 
		
\begin{defi}
A \emph{monoidal category} $(\calC, \otimes, I, a, l,r )$ is a category $\calC$ equipped with a bifunctor $\otimes: \calC \times \calC \rightarrow \calC,$ with an object $I$, called the unit of the monoidal category, with an associativity constraint $a$, left and right unit constraints $l,r$ with respect to $I$ such that the following diagrams 
\[
\begin{tikzcd}[column sep=1.8em, row sep=3.2ex]
				& \bigl((U\otimes V)\otimes W\bigr)\otimes Z
				\ar[dl, "a_{U,V,W}\otimes\mathrm{id}_Z"']
				\ar[dr, "a_{U\otimes V,W,Z}"] \\
				(U\otimes(V\otimes W))\otimes Z
				\ar[d, "a_{U,V\otimes W,Z}"']
				&& (U\otimes V)\otimes(W\otimes Z)
				\ar[d, "a_{U,V,W\otimes Z}"] \\
				U\otimes\bigl((V\otimes W)\otimes Z\bigr)
				\ar[rr, "\mathrm{id}_U\otimes a_{V,W,Z}"']
		 	&& U\otimes\bigl(V\otimes(W\otimes Z)\bigr)
\end{tikzcd}
\]
and
\[
			\begin{tikzcd}[column sep=3em, row sep=3.5ex]
				(V\otimes I)\otimes W
				\ar[rr, "a_{V,I,W}"]
				\ar[dr, "r_V\otimes\mathrm{id}_W"']
				&& V\otimes(I\otimes W)
				\ar[dl, "\mathrm{id}_V\otimes l_W"] \\
				& V\otimes W
			\end{tikzcd}
\]
commute. Moreover, it is said to be \emph{strict} if the associativity and unit constraints $a,l,r$ are all identities. 
\end{defi}
		
Recall that a braiding in $\calC$ is a family of natural isomorphisms $c_{V,W}: V\otimes W \rightarrow W\otimes V$
such that 
\begin{align}\label{braid1}
	c_{U,V\otimes W}= (\id_V \otimes c_{U,W} ) \circ (c_{U,V}\otimes \id_W)
\end{align}
and
\begin{align}\label{braid2}
	c_{U\otimes V,W}=(c_{U,W}\otimes \id_V) \circ (\id_U \otimes c_{V,W})
\end{align}
for all $U,V,W \in \ob(\calC)$. 
\begin{defi}[Definition $2.1$, \cite{JS93}]
A \emph{braided monoidal category} $(\calC, \otimes, I,a,l,r,c )$ is a monoidal category $\calC$ equipped with a braiding $c$. Moreover, if $c_{W,V} \circ c_{V,W} = \id_{V\otimes W }$ for all $V,W\in \ob(\calC)$, then $\calC$ is called a \emph{symmetric monoidal category}. 
\end{defi}
		
Next, we give several examples of monoidal categories.
		
\begin{exam}
The most basic example is the category $\mathcal{V}ec(\bbC)$ of complex vector spaces. 
Its monoidal structure is given by the usual tensor product of vector spaces over $\mathbb{C}$, and the unit object is the ground field $\mathbb{C}$. 
The associativity and unit constraints are the canonical isomorphisms
 \[(U\otimes V) \otimes W \cong U\otimes (V\otimes W) \ \  \mathrm{and} \ \ \bbC\otimes V \cong V \cong V\otimes \bbC,   \]
for all complex vector spaces $U,V$ and $W$. 
Moreover, $\mathcal{V}ec(\bbC)$ is a symmetric monoidal category with braiding $c_{V,W}: V\otimes W \to W\otimes V, \ v\otimes w \mapsto w \otimes v$ for all $v\in V$ and $w \in W$.
\end{exam}
		
\begin{exam}
A slightly richer example is provided by the category $s\mathcal{V}ec(\bbC)$ of $\bbZ_2$-graded vector spaces, also called super vector spaces. 
An object of this category is a vector space $V=V_{\bar 0}\oplus V_{\bar 1},$ and morphisms are linear maps preserving the $\mathbb{Z}_2$-grading. The unit object is again $\mathbb{C}$, regarded as concentrated in degree $\bar 0$.
For two super vector spaces $V$ and $W$, their tensor product is endowed with the $\mathbb{Z}_2$-grading
\[
V\otimes W=(V\otimes W)_{\bar 0}\oplus (V\otimes W)_{\bar 1},
\]
where
$(V\otimes W)_{\bar 0}
=(V_{\bar 0}\otimes W_{\bar 0})
\oplus
(V_{\bar 1}\otimes W_{\bar 1})
$
and
$(V\otimes W)_{\bar 1}
=(V_{\bar 0}\otimes W_{\bar 1})
\oplus
(V_{\bar 1}\otimes W_{\bar 0}).
$
If $f:V\to V'$ and $g:W\to W'$ are morphisms in $s\mathcal{V}ec(\mathbb{C})$, then their tensor product is given by
\[
(f\otimes g)(v\otimes w)=(-1)^{|g||v|}f(v)\otimes g(w).
\]
The associativity and unit constraints are the usual canonical ones. The braiding is given by the Koszul sign rule
\[
c_{V,W}:V\otimes W\longrightarrow W\otimes V,
\quad
v\otimes w\longmapsto (-1)^{|v||w|}w\otimes v,
\]
for elements $v\in V$ and $w\in W$.
\end{exam}
		
\begin{exam}
Let $L$ be a Lie superalgebra. The category $\mathbf{Mod}(L)$ of  $L$-modules in $s\mathcal{V}ec(\mathbb{C})$ is a symmetric monoidal category. More precisely, if $V$ and $W$ are $L$-modules, then $V\otimes W$ becomes an $L$-module via the diagonal action
\[
x\cdot (v\otimes w)
=
x\cdot v\otimes w
+
(-1)^{|x||v|}v\otimes x\cdot w,
\]
for elements $x\in L, v\in V$ and $w\in W$. The unit object is the trivial one-dimensional $L$-module $\mathbb{C}$. The symmetric braiding is induced from the braiding in $s\mathcal{V}ec(\mathbb{C})$. 
\end{exam}

\begin{exam}
We also recall the standard monoidal structure on cochain complexes. A \emph{differential $\mathbb{Z}$-graded vector space}, or equivalently a cochain complex, is a pair $(V,d_V)$, where
\[
V=\bigoplus_{n\in \mathbb{Z}}V^n
\]
is a $\mathbb{Z}$-graded vector space and $d_V:V\to V$ is a linear map of degree $1$ such that $d_V^2=0$. Thus one has a complex
\[
\cdots
\longrightarrow
V^{n-1}
\overset{d_V}{\longrightarrow}
V^n
\overset{d_V}{\longrightarrow}
V^{n+1}
\longrightarrow
\cdots .
\]
For a nonzero homogeneous element $v \in V^n$, set $|v|=n$.

The category $\mathrm{dg}\mathcal{V}ec(\mathbb{C})$ of differential $\mathbb{Z}$-graded vector spaces is a symmetric monoidal category. A morphism $f:(V,d_V)\longrightarrow (W,d_W)$
is a degree-zero linear map $f:V\to W$ satisfying
\[
d_W\circ f=f\circ d_V.
\]
For two objects $(V,d_V)$ and $(W,d_W)$, their tensor product is defined by $(V,d_V)\otimes (W,d_W)
=
(V\otimes W,d_{V\otimes W}),$
where
\[
d_{V\otimes W}(v\otimes w)
=
d_V(v)\otimes w
+
(-1)^{|v|}v\otimes d_W(w)
\]
for elements $v\in V$ and $w\in W$. With this convention, one has
$d_{V\otimes W}^2=0.$
Indeed, for $v\in V$ and $w\in W$, we have 
\[
\begin{aligned}
	d_{V\otimes W}^2(v\otimes w)
	&=
	d_{V\otimes W}\bigl(d_V(v)\otimes w
	+
	(-1)^{|v|}v\otimes d_W(w)\bigr)  \\
	&=
	d_V^2(v)\otimes w
	+
	(-1)^{|v|+1}d_V(v)\otimes d_W(w)
	+
	(-1)^{|v|}d_V(v)\otimes d_W(w)
	+
	v\otimes d_W^2(w)  \\
	&=0.
\end{aligned}
\]  
The unit object is $\mathbb{C}$, regarded as a complex concentrated in degree $0$ with zero differential. 
The associativity and unit constraints are the usual canonical ones, and the braiding is given by
\[
c_{V,W}:V\otimes W\longrightarrow W\otimes V,
\quad
v\otimes w\longmapsto (-1)^{|v||w|}w\otimes v
\]
for all $v\in V$ and $w \in W$.  For further details, see \cite{Kel06}.
\end{exam}

\begin{exam}
For any category $\mathcal{D}$, the category $\mathrm{End}(\mathcal{D})$ of endofunctors of $\mathcal{D}$ is a strict monoidal category. Its objects are endofunctors
$F:\mathcal{D}\longrightarrow \mathcal{D},$
and its morphisms are natural transformations between such functors. The tensor product is given by composition of endofunctors: $F\otimes G:=F\circ G,$
and the unit object is the identity functor $\mathrm{Id}_{\mathcal{D}}$. 
With these choices, the associativity and unit constraints are identities, and hence $\mathrm{End}(\mathcal{D})$ is strict monoidal. In general, however, this monoidal category is not braided, and hence not symmetric. Indeed, a braiding would require natural isomorphisms $F\circ G \cong G\circ F$
for all endofunctors $F,G$, which do not exist in general.
\end{exam}

By the Mac Lane's strictness theorem, every monoidal category is monoidally equivalent to a strict monoidal category \cite{Mac98}.
Since all constructions below are invariant under monoidal equivalence, we work with strict monoidal categories.

\section{Lie-Rinehart (co)monoids}\label{secoflierinehart}
In this section we introduce a categorical analogue of a Lie-Rinehart (co)monoid, together with the associated notions of a weak (co)module and a (co)crossed homomorphism in $\calC$. Under additional conditions, we further prove that the category of weak (co)modules over a Lie-Rinehart (co)monoid carries the structure of a module category over the monoidal category constructed from a Lie (co)monoid.
\subsection{Basic definitions}
\begin{defi}\label{defiofliemonoid}
Let $(A,m,\eta)$ be a monoid, and let $(L,\mu)$ be a Lie monoid in $\calC$.  A \emph{Lie-Rinehart monoid}
in $\mathcal{C}$ consists of the data $(A,m,\eta,L,\mu,\alpha,\beta),$
where $\beta:A\otimes L\to L$ equips $L$ with the structure of a left $A$-module, and where
$\alpha:L\otimes A\longrightarrow A$
is a morphism in $\mathcal{C}$, called the anchor, satisfying the following
axioms:
\begin{equation} \label{derivation}
\begin{tikzpicture}[anchorbase]
					\draw[bmod] (0.25, 0) -- (0.5, 0.25);
					\draw[bmod] (0, 0) -- (0, 0.25) -- (0.25, 0.5);
					\draw[bmod] (0.75, 0) -- (0.5, 0.25) -- (0.25, 0.5) -- (0.25, 0.75);
					\btoken{0.5, 0.25}{west}{m};
					\btoken{0.25, 0.5}{west}{\alpha};
\end{tikzpicture}
				\ =\
\begin{tikzpicture}[anchorbase]
					\draw[bmod] (0.5, 0) -- (0.25, 0.25);
					\draw[bmod] (0, 0) -- (0.25, 0.25) -- (0.5, 0.5);
					\draw[bmod] (0.75, 0) -- (0.75, 0.25) -- (0.5, 0.5) -- (0.5, 0.75);
					\btoken[2px]{0.5, 0.5}{west}{m};
					\btoken[2px]{0.25, 0.25}{east}{\alpha};
\end{tikzpicture}
				\ +\
\begin{tikzpicture}[anchorbase]
					\draw[bmod] (0, -0.25) -- (0.25, 0) -- (0.5, 0.25);
					\draw[bmod] (0.25, -0.25) -- (0, 0) -- (0, 0.25) -- (0.25, 0.5);
					\draw[bmod] (0.75, -0.25) -- (0.75, 0) -- (0.5, 0.25) -- (0.25, 0.5) -- (0.25, 0.75);
					\btoken{0.5, 0.25}{west}{\alpha};
					\btoken[2px]{0.25, 0.5}{west}{m};
\end{tikzpicture}\ ,
\end{equation}
			
\begin{equation}\label{homofa}
				\
				\begin{tikzpicture}[anchorbase]
					\draw[bmod] (0.5, 0) -- (0.25, 0.25);
					\draw[bmod] (0, 0) -- (0.25, 0.25) -- (0.5, 0.5);
					\draw[bmod] (0.75, 0) -- (0.75, 0.25) -- (0.5, 0.5) -- (0.5, 0.75);
					\btoken[2px]{0.5, 0.5}{west}{\alpha};
					\btoken[2px]{0.25, 0.25}{east}{\beta};
				\end{tikzpicture}
				\ = \ 
				\begin{tikzpicture}[anchorbase]
					\draw[bmod] (0.25, 0) -- (0.5, 0.25);
					\draw[bmod] (0, 0) -- (0, 0.25) -- (0.25, 0.5);
					\draw[bmod] (0.75, 0) -- (0.5, 0.25) -- (0.25, 0.5) -- (0.25, 0.75);
					\btoken{0.5, 0.25}{west}{\alpha};
					\btoken{0.25, 0.5}{west}{m};
				\end{tikzpicture} \ ,
\end{equation}
			
\begin{equation} \label{liehom}
				\begin{tikzpicture}[anchorbase]
					\draw[bmod] (0.25, 0) -- (0.5, 0.25);
					\draw[bmod] (0, 0) -- (0, 0.25) -- (0.25, 0.5);
					\draw[bmod] (0.75, 0) -- (0.5, 0.25) -- (0.25, 0.5) -- (0.25, 0.75);
					\btoken{0.5, 0.25}{west}{\alpha};
					\btoken{0.25, 0.5}{west}{\alpha};
				\end{tikzpicture}
				\ =\
				\begin{tikzpicture}[anchorbase]
					\draw[bmod] (0.5, 0) -- (0.25, 0.25);
					\draw[bmod] (0, 0) -- (0.25, 0.25) -- (0.5, 0.5);
					\draw[bmod] (0.75, 0) -- (0.75, 0.25) -- (0.5, 0.5) -- (0.5, 0.75);
					\btoken[2px]{0.5, 0.5}{west}{\alpha};
					\btoken[2px]{0.25, 0.25}{east}{\mu};
				\end{tikzpicture}
				\ +\
				\begin{tikzpicture}[anchorbase]
					\draw[bmod] (0, -0.25) -- (0.25, 0) -- (0.5, 0.25);
					\draw[bmod] (0.25, -0.25) -- (0, 0) -- (0, 0.25) -- (0.25, 0.5);
					\draw[bmod] (0.75, -0.25) -- (0.75, 0) -- (0.5, 0.25) -- (0.25, 0.5) -- (0.25, 0.75);
					\btoken{0.5, 0.25}{west}{\alpha};
					\btoken[2px]{0.25, 0.5}{west}{\alpha};
				\end{tikzpicture}\ ,
\end{equation}
and 
\begin{equation} \label{compatible1}
				\begin{tikzpicture}[anchorbase]
					\draw[bmod] (0.25, 0) -- (0.5, 0.25);
					\draw[bmod] (0, 0) -- (0, 0.25) -- (0.25, 0.5);
					\draw[bmod] (0.75, 0) -- (0.5, 0.25) -- (0.25, 0.5) -- (0.25, 0.75);
					\btoken{0.5, 0.25}{west}{\beta};
					\btoken{0.25, 0.5}{west}{\mu};
				\end{tikzpicture}
				\ =\
				\begin{tikzpicture}[anchorbase]
					\draw[bmod] (0.5, 0) -- (0.25, 0.25);
					\draw[bmod] (0, 0) -- (0.25, 0.25) -- (0.5, 0.5);
					\draw[bmod] (0.75, 0) -- (0.75, 0.25) -- (0.5, 0.5) -- (0.5, 0.75);
					\btoken[2px]{0.5, 0.5}{west}{\beta};
					\btoken[2px]{0.25, 0.25}{east}{\alpha};
				\end{tikzpicture}
				\ + \
				\begin{tikzpicture}[anchorbase]
					\draw[bmod] (0, -0.25) -- (0.25, 0) -- (0.5, 0.25);
					\draw[bmod] (0.25, -0.25) -- (0, 0) -- (0, 0.25) -- (0.25, 0.5);
					\draw[bmod] (0.75, -0.25) -- (0.75, 0) -- (0.5, 0.25) -- (0.25, 0.5) -- (0.25, 0.75);
					\btoken{0.5, 0.25}{west}{\mu};
					\btoken[2px]{0.25, 0.5}{west}{\beta};
				\end{tikzpicture}\ .
\end{equation}
We abbreviate it as $(A,L,\alpha)$ when unambiguous. Moreover, the monoid $A$ is assumed to be commutative in what follows.
\end{defi}
		
When the monoidal category $\calC$ is specialized to the category $\mathcal{V}ec(\bbC)$ of complex vector spaces, the above notion of a Lie-Rinehart monoid reduces precisely to the classical Lie-Rinehart algebra introduced in \cite{Rin63}. 
		
Next, we present a crossed product construction for Lie-Rinehart monoids.
Starting from a Lie-Rinehart monoid $(A,L,\alpha)$, we keep the commutative
monoid $A$ fixed and replace the Lie part $L$ by $L\otimes A$. The resulting
structure is again a Lie-Rinehart monoid, and may be viewed as the categorical
counterpart of the classical crossed product construction. When
$\mathcal{C}=s\mathcal{V}ec(\mathbb{C})$, this construction specializes to
the example discussed in Example~$2.1$ of \cite{Che95}.
\begin{prop}\label{mainofanchor}
Let $(A, m,\eta, L, \mu, \alpha, \beta )$ be a Lie-Rinehart monoid in $\calC$. Then the tuple $(A,m,\eta,L\otimes A, \mu_A, \alpha_{A}, \beta_A)$ forms a new Lie-Rinehart monoid, where 
\begin{equation*}
\alpha_A= \ 
\begin{tikzpicture}[anchorbase]
	\draw[bmod] (0, -0.25) -- (0.25, 0) -- (0.5, 0.25);
					\draw[bmod] (0.25, -0.25) -- (0, 0) -- (0, 0.25) -- (0.25, 0.5);
					\draw[bmod] (0.75, -0.25) -- (0.75, 0) -- (0.5, 0.25) -- (0.25, 0.5) -- (0.25, 0.75);
					\btoken{0.5, 0.25}{west}{\alpha};
					\btoken[2px]{0.25, 0.5}{west}{\alpha};
\end{tikzpicture} \ ,
				\  \beta_A = \ 
				\begin{tikzpicture}[anchorbase]
					\draw[bmod] (0, 0) -- (0.25, 0.5) -- (0.25, 0.75);
					\draw[bmod] (0.25, 0.5) -- (0.5, 0);
					\draw[bmod] (0.75, 0) -- (0.75, 0.75);
					\btoken{0.25, 0.5}{west}{\beta};
				\end{tikzpicture} 
			\end{equation*}
			and
			\begin{equation*}
				\mu_A= \ 
				\begin{tikzpicture}[anchorbase]
					\draw[bmod] (0, 0) -- (0, 0.5) -- (0.25, .75)-- (0.25, 1);
					\draw[bmod] (0.25, 0) -- (0.25, 0.25)-- (0.75, 0.75)--(0.75,1);
					\draw[bmod] (0.75, 0) -- (0.75, 0.25)--(0.25,.75)--(.25,1);
					\draw[bmod] (1, 0) -- (1, 0.5)--(.75,.75);
					\btoken{0.25, 0.75}{east}{\mu};
					\btoken{0.75, 0.75}{west}{m};
				\end{tikzpicture} \ + \ 
				\begin{tikzpicture}[anchorbase]
					\draw[bmod] (0, 0) -- (0.5, 0.5)--(0.25,0.75)--(0.25,1);
					\draw[bmod] (0.25, 0) -- (0, 0.25)--(0,0.5)--(0.25,0.75)--(0.25,1);
					\draw[bmod] (0.5, 0) -- (0, 0.75)--(0,1);
					\draw[bmod] (0.75, 0) -- (0.75, 0.25)--(0.5,0.5);
					\btoken{0.25, 0.75}{west}{m};
					\btoken{0.5, 0.5}{west}{\alpha};
				\end{tikzpicture} \ - \ 
				\begin{tikzpicture}[anchorbase]
					\draw[bmod] (0, 0) -- (0.5, 0.25)--(0.25,0.5)--(0.5,0.75)--(0.5,1);
					\draw[bmod] (0.5, 0) -- (0, 0.25)--(0.25,0.5);
					\draw[bmod] (0.75, 0) -- (0.75, 0.5)--(0.5,0.75);
					\draw[bmod] (-0.25, 0) -- (-0.25, 1);
					\btoken{0.5, 0.75}{west}{m};
					\btoken{0.25, 0.5}{east}{\alpha};
				\end{tikzpicture}\ . 
			\end{equation*}
\end{prop}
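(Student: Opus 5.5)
The plan is to verify every axiom of Definition~\ref{defiofliemonoid} for the new data directly, by string-diagram manipulation. The only tools are naturality of the braiding, the symmetry $c_{W,V}\circ c_{V,W}=\id$, associativity, unitality and commutativity of $A$, and the four axioms \eqref{derivation}, \eqref{homofa}, \eqref{liehom}, \eqref{compatible1} for $(A,L,\alpha)$.

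To keep the bookkeeping manageable, I will first read the diagrams in generalized-element notation, which is legitimate in a symmetric monoidal category because every braiding can be slid to the bottom of a diagram. In that notation:
\begin{itemize}
\item $\alpha_A(x\otimes a\otimes b)=a\,\alpha(x\otimes b)$;
\item $\beta_A(b\otimes x\otimes a)=\beta(b\otimes x)\otimes a$;
\item $\mu_A(x\otimes a\otimes y\otimes b)=\mu(x\otimes y)\otimes ab+y\otimes a\,\alpha(x\otimes b)-x\otimes b\,\alpha(y\otimes a)$.
\end{itemize}
Here the braidings required to bring each letter into position are implicit. Each identity below is then checked term by term, and each term is afterwards redrawn as a string diagram so that the equalities hold in $\calC$ itself.

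The order of verification will be as follows.
\begin{enumerate}
\item $\beta_A$ is a left $A$-module structure on $L\otimes A$. This is immediate from the corresponding property of $\beta$, since $\beta_A=\beta\otimes\id_A$.
\item $\mu_A$ is antisymmetric. Precomposing with the braiding of $L\otimes A$ with itself fixes the first term, by antisymmetry of $\mu$ and commutativity of $m$. It interchanges the second and third terms up to sign, using symmetry of $c$.
\item The anchor axioms for $\alpha_A$.
 \begin{itemize}
 \item For \eqref{derivation}, expand $a\,\alpha(x)(bc)$ using \eqref{derivation} for $\alpha$, then use associativity and commutativity of $m$.
 \item For \eqref{homofa}, write $\alpha_A(b\cdot(x\otimes a))=a\,\alpha(bx)(-)=ab\,\alpha(x)(-)$, using \eqref{homofa} and associativity.
 \item For \eqref{compatible1}, expand $\mu_A(X,c\cdot Y)$. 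Apply \eqref{compatible1} to $\mu(x,cy)$, and note that the two cross terms pick up $c$ or $\alpha(x)(c)$ by the derivation rule. The extra piece $a\,\alpha(x)(c)\,Y=\alpha_A(X)(c)\,Y$ then appears exactly as required.
 \end{itemize}
\item Axiom \eqref{liehom} for $\alpha_A$. Compute $\alpha_A(X)\alpha_A(Y)(c)-\alpha_A(Y)\alpha_A(X)(c)$ with \eqref{derivation}. This produces the terms $a\,\alpha(x)(b)\,\alpha(y)(c)$ and $ab\,\alpha(x)\alpha(y)(c)$, together with their swaps. Using \eqref{liehom} for $\alpha$, the result matches $\alpha_A(\mu_A(X,Y))(c)$ term by term: the $\mu$-term gives $ab\,\alpha([x,y])(c)$, and the two cross terms give the mixed products.
\end{enumerate}

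The main obstacle is the Jacobi identity for $\mu_A$. My plan is to expand the cyclic sum $\sum_{\mathrm{cyc}}\mu_A(\mu_A(X,Y),Z)$ and sort the resulting terms by how many times $\alpha$ appears and which $L$-letter survives.
\begin{itemize}
\item Terms with no $\alpha$ are $[[x,y],z]\otimes abc$. They cancel by Jacobi for $\mu$ together with commutativity and associativity of $A$, which make the $A$-factors agree after the permutation braidings.
\item Terms with one $\alpha$ of the form $[x,y]\otimes(\cdot)\alpha(z)(\cdot)$ and $[x,y]\otimes(\cdot)\alpha(\cdot)$ cancel in pairs, using the explicit cross terms and \eqref{derivation}.
\item Terms with a double anchor, such as $z\otimes c\,\alpha([x,y])(\ldots)$, combine with terms like $z\otimes \alpha(x)\alpha(y)(\ldots)$ and products of single anchors. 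These vanish by \eqref{liehom} and \eqref{derivation}.
\end{itemize}
Diagrammatically, I would organize this by fixing which of the three $L$-strands reaches the output. I would then check that, for each such strand, the coefficient morphism $A^{\otimes 3}\otimes L^{\otimes 2}\to A$ vanishes. Note that since $\beta$ never enters $\mu_A$, this step uses only \eqref{derivation} and \eqref{liehom}.
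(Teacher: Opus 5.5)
\textbf{Gap in step (c): the check of \eqref{compatible1} fails.} You read $\beta_A=\beta\otimes\id_A$ off the diagram correctly, but with that choice the axiom is false, so this step cannot be completed. In your element notation put $X=x\otimes a$, $Y=y\otimes b$, write $[x,y]$ for $\mu(x\otimes y)$, and suppress braidings. Then $c\cdot Y=cy\otimes b$. Applying \eqref{compatible1} and \eqref{homofa} for $(A,L,\alpha)$ gives
\[
\mu_A(X,c\cdot Y)=\alpha(x)(c)\,y\otimes ab+c[x,y]\otimes ab+cy\otimes a\,\alpha(x)(b)-x\otimes bc\,\alpha(y)(a),
\]
while the required right-hand side is
\[
\alpha_A(X)(c)\cdot Y+c\cdot\mu_A(X,Y)=a\,\alpha(x)(c)\,y\otimes b+c[x,y]\otimes ab+cy\otimes a\,\alpha(x)(b)-cx\otimes b\,\alpha(y)(a).
\]
The first and last terms do not match. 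The reason is that $L\otimes A$ is the tensor product in $\calC$, not over $A$, so $a$ and $c$ cannot be moved between the $L$-factor and the $A$-factor. Concretely, work in $\mathcal{V}ec(\mathbb{C})$ with $A=\mathbb{C}[t]$, $L=\mathrm{Der}(A)=A\partial$, $X=\partial\otimes t$, $Y=\partial\otimes 1$ and $c=t^2$. The left side is $2t\partial\otimes t-\partial\otimes t^2$ and the right side is $t^2\partial\otimes 1$. So the proposition is false as stated. The paper's proof only asserts a direct verification and does not catch this.

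\textbf{Your sketch mixes two module structures, and the other one is correct.} Your phrase ``apply \eqref{compatible1} to $\mu(x,cy)$'' belongs to $\beta\otimes\id_A$. By contrast, ``the cross terms pick up $c$ or $\alpha(x)(c)$ by the derivation rule'' and ``the extra piece is $a\,\alpha(x)(c)\,Y$'' are only true when $c$ acts on the $A$-factor. That second structure is the one that works: take $\beta_A=(\id_L\otimes m)\circ(c_{A,L}\otimes\id_A)$, i.e.\ $c\cdot(x\otimes a)=x\otimes ca$, which is an $A$-module by Lemma~\ref{tensorofamod}. Then:
\begin{itemize}
\item In $\mu_A(X,y\otimes cb)$, the first and third terms are just $c$ times the corresponding terms of $\mu_A(X,Y)$.
\item In the middle term, \eqref{derivation} applied to $\alpha(x)(cb)$ yields exactly $\alpha_A(X)(c)\cdot Y$ plus $c$ times the middle term of $\mu_A(X,Y)$.
\item \eqref{homofa} reduces to associativity and commutativity of $m$.
\end{itemize}
The rest of your plan is sound and unaffected: the module axioms, antisymmetry, \eqref{derivation} and \eqref{liehom} for $\alpha_A$, and the Jacobi bookkeeping, which is the standard transformation Lie--Rinehart computation. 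With the corrected $\beta_A$, the original $\beta$ and the axioms \eqref{homofa} and \eqref{compatible1} of $(A,L,\alpha)$ are not used at all.
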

\begin{proof}
The proof is a direct verification of the defining axioms of a
Lie-Rinehart monoid. More precisely, one checks that $\mu_A$ defines a Lie monoid structure on $L\otimes A$, that $\beta_A$ equips $L\otimes A$ with a left $A$-module structure, and that the anchor $\alpha_A$ satisfies the derivation property and the required compatibility conditions with
$\mu_A$ and $\beta_A$.

Each of these identities follows from the corresponding identity for $(A,m,\eta,L,\mu,\alpha,\beta),$
together with the associativity and commutativity of the monoid $A$. In the language of graphical calculus, the required diagrams are obtained by successively applying the defining diagrams of a Lie-Rinehart monoid and then using the coherence of the ambient monoidal category. Hence $(A,m,\eta,L\otimes A,\mu_A,\alpha_A,\beta_A)$ is a Lie-Rinehart monoid.
\end{proof}

The crossed product construction in Proposition~\ref{mainofanchor} can be iterated. Starting from a Lie-Rinehart monoid
$(A,m,\eta,L,\mu,\alpha,\beta),$
we define a sequence of objects and morphisms recursively as follows:		
\begin{align*}
			&L_A^{[0]}=L, \ \  \mu_A^{[0]}=\mu, \ \  \alpha_A^{[0]}=\alpha, \ \ \beta_A^{[0]}=\beta,\\
			&L_A^{[r]}=L_A^{[r-1]}\otimes A, \ \  \mu_A^{[r]}=\left(\mu_A^{[r-1]}\right)_{A}, \ \  \alpha_A^{[r]}=\left(\alpha_A^{[r-1]}\right)_{A}, \ \ \beta_A^{[r]}=\left(\beta_A^{[r-1]}  \right)_A,\ \forall r \geq 1. 
\end{align*}
Here $(\mu_A^{[r-1]})_A$, $(\alpha_A^{[r-1]})_A$, and $(\beta_A^{[r-1]})_A$ denote
the  morphisms obtained from the crossed product construction applied to the Lie-Rinehart monoid
$(A,m,\eta,L_A^{[r-1]},\mu_A^{[r-1]},\alpha_A^{[r-1]},\beta_A^{[r-1]}).$

\begin{coro}
For every $r\in \bbZ_{\geq 0}$, the tuple $(A,m,\eta,L_A^{[r]},\mu_A^{[r]}, \alpha_A^{[r]},\beta_A^{[r]})$ is a  Lie-Rinehart monoid.
\end{coro}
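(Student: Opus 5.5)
This is a straightforward induction on $r$, with Proposition~\ref{mainofanchor} as the inductive step. For $r=0$ we have by definition
\[
(A,m,\eta,L_A^{[0]},\mu_A^{[0]},\alpha_A^{[0]},\beta_A^{[0]})=(A,m,\eta,L,\mu,\alpha,\beta),
\]
which is a Lie-Rinehart monoid by hypothesis, with $A$ commutative as assumed throughout.

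For the inductive step, assume that for some $r\geq 1$ the tuple $(A,m,\eta,L_A^{[r-1]},\mu_A^{[r-1]},\alpha_A^{[r-1]},\beta_A^{[r-1]})$ is a Lie-Rinehart monoid in $\calC$. Apply Proposition~\ref{mainofanchor} to it, with $L_A^{[r-1]}$ playing the role of $L$. The proposition gives a Lie-Rinehart monoid
\[
\bigl(A,m,\eta,L_A^{[r-1]}\otimes A,(\mu_A^{[r-1]})_A,(\alpha_A^{[r-1]})_A,(\beta_A^{[r-1]})_A\bigr).
\]
By the recursive definition, this tuple is exactly $(A,m,\eta,L_A^{[r]},\mu_A^{[r]},\alpha_A^{[r]},\beta_A^{[r]})$, which completes the induction.

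No step is a real obstacle; the only point to check is that Proposition~\ref{mainofanchor} applies verbatim at each stage. Its hypotheses involve only the Lie-Rinehart monoid axioms and the commutativity of $A$. Both persist through the iteration: the monoid $(A,m,\eta)$ never changes, and the new Lie part $L_A^{[r-1]}\otimes A$ is just another object of the strict symmetric monoidal category $\calC$. So the proposition's graphical verification, which treats the $L$-strand as an arbitrary object, applies unchanged with that strand labelled $L_A^{[r-1]}$.
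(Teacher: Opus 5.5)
Your induction on $r$, with Proposition~\ref{mainofanchor} as the inductive step, is exactly the argument the paper intends (it states the corollary without separate proof, as the immediate result of iterating that proposition), so your proof is correct relative to that proposition. One caution, since all the real content lives in Proposition~\ref{mainofanchor}: as printed there, $\beta_A=\beta\otimes\id_A$ violates the Leibniz axiom \eqref{compatible1}. For example, take $A=\bbC[t]$, $L=A\partial$, $X=\partial\otimes t$, $Y=t\partial\otimes 1$ and $a=t$; the left side minus the right side is $2t\partial\otimes t-\partial\otimes t^2-t^2\partial\otimes 1\neq 0$. So your induction really runs on the corrected construction, with $\beta_A=(\id_L\otimes m)\circ(c_{A,L}\otimes\id_A)$ and with the upper token of $\alpha_A$ read as $m$.
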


We now introduce morphisms and weak modules for Lie-Rinehart monoids. In what follows, all Lie-Rinehart monoids are assumed to have the same underlying commutative monoid $A$ unless otherwise stated.		
\begin{defi}
Let $(A,L,\alpha)$ and $(A,L',\alpha')$ be Lie-Rinehart monoids in $\calC$. 
A morphism $f:L\rightarrow L'$ is called a \emph{ homomorphism} of Lie-Rinehart monoids if $f$ is a homomorphism of Lie monoids and is compatible with the anchors, namely
\begin{equation*}
				\begin{tikzpicture}[anchorbase]
					\draw[bmod] (0, 0) -- (0, 0.25) -- (0.25, 0.5)--(0.25,0.75);
					\draw[bmod] (0.25, 0.5) -- (0.5, 0.25)--(0.5,0);
					\btoken{0.25, 0.5}{west}{\alpha'};
					\btoken{0, 0.25}{east}{f};
				\end{tikzpicture} \ = \ 
				\begin{tikzpicture}[anchorbase]
					\draw[bmod] (0, 0) -- (0, 0.25) -- (0.25, 0.5)--(0.25,0.75);
					\draw[bmod] (0.25, 0.5) -- (0.5, 0.25)--(0.5,0);
					\btoken{0.25, 0.5}{west}{\alpha};
				\end{tikzpicture} \ .
\end{equation*}
\end{defi}
		
\begin{defi} 
The triple $(M; \rho,\gamma)$ is called a \emph{weak module} over  $(A,m,\eta,L,\mu,\alpha,\beta)$ if it satisfies the following conditions:
\begin{enumerate}
\item [(1)] the pair $(M,\rho)$ is an $A$-module;
				
\item [(2)] the pair $(M,\gamma)$ is an $L$-module;
				
\item [(3)] the following compatible condition is satisfied:
\begin{equation} \label{compatible2}
					\begin{tikzpicture}[anchorbase]
						\draw[bmod] (0.25, 0) -- (0.5, 0.25);
						\draw[bmod] (0, 0) -- (0, 0.25) -- (0.25, 0.5);
						\draw[bmod] (0.75, 0) -- (0.5, 0.25) -- (0.25, 0.5) -- (0.25, 0.75);
						\btoken{0.5, 0.25}{west}{\rho};
						\btoken{0.25, 0.5}{west}{\gamma};
					\end{tikzpicture}
					\ =\
					\begin{tikzpicture}[anchorbase]
						\draw[bmod] (0.5, 0) -- (0.25, 0.25);
						\draw[bmod] (0, 0) -- (0.25, 0.25) -- (0.5, 0.5);
						\draw[bmod] (0.75, 0) -- (0.75, 0.25) -- (0.5, 0.5) -- (0.5, 0.75);
						\btoken[2px]{0.5, 0.5}{west}{\rho};
						\btoken[2px]{0.25, 0.25}{east}{\alpha};
					\end{tikzpicture}
					\ +\
					\begin{tikzpicture}[anchorbase]
						\draw[bmod] (0, -0.25) -- (0.25, 0) -- (0.5, 0.25);
						\draw[bmod] (0.25, -0.25) -- (0, 0) -- (0, 0.25) -- (0.25, 0.5);
						\draw[bmod] (0.75, -0.25) -- (0.75, 0) -- (0.5, 0.25) -- (0.25, 0.5) -- (0.25, 0.75);
						\btoken{0.5, 0.25}{west}{\gamma};
						\btoken[2px]{0.25, 0.5}{west}{\rho};
					\end{tikzpicture}\ .
				\end{equation}   
\end{enumerate}
\end{defi}
\begin{rmk}
The adjective ``weak'' indicates that no $A$-linearity condition is imposed on the $L$-action. More precisely, in the definition above, the morphism $\gamma:L\otimes M\rightarrow M$
is required to define an $L$-module structure on $M$ and to satisfy the Leibniz-type compatibility condition \eqref{compatible2}. However, we do not require $\gamma$ to be $A$-linear with respect to the $A$-module structures on $L$ and $M$. Thus a weak module is weaker than the usual notion of a
Lie-Rinehart module, where the $L$-action is typically required to be compatible with the $A$-module structures in an $A$-linear sense.
\end{rmk}

With the notation introduced in the definition of a Lie monoid,  relations \eqref{derivation} and \eqref{compatible1} imply that the triples $(A;m,\alpha)$ and $(L;\beta,\mu)$ are weak modules over $(A,L,\alpha)$. 
Given two weak modules $(M;\rho,\gamma)$ and $(M';\rho',\gamma')$ of the Lie-Rinehart monoid $(A,L,\alpha)$, a morphism $\phi:M\rightarrow M'$ is called a \emph{homomorphism of Lie-Rinehart weak modules} if $\phi$ is both a homomorphism of $A$-modules and $L$-modules. The composition of homomorphisms of the Lie-Rinehart weak modules is again a homomorphism of Lie-Rinehart weak modules. Consequently, all weak modules over the Lie-Rinehart monoid $(A,L,\alpha)$ form a category, denoted by ${\bf WMod}^{\calC}_A{\big(L\big)}$. 
		
Similar to Proposition~\ref{mainofanchor}, the crossed product construction can also be applied to weak modules. 
\begin{prop}\label{prop:cross}
Let $(M;\rho,\gamma)$ be a weak module over $(A,m,\eta,L, \mu, \alpha, \beta)$. Then the triple $(M;\rho,\gamma_A)$ forms a weak module over $(A,m,\eta,L\otimes A, \mu_A, \alpha_{A}, \beta_A)$,
where 
\begin{equation*}
\gamma_A \ = 
\begin{tikzpicture}[anchorbase]
\draw[bmod] (0, 0) -- (0.5, 0.5) -- (0.25, 0.75)--(0.25,1);
\draw[bmod] (0.5, 0) -- (0, 0.5)--(0.25,0.75);
\draw[bmod] (0.75, 0) -- (0.75, 0.25)--(0.25,0.75);
\btoken{0.25, 0.75}{east}{\rho};
\btoken{0.5, 0.5}{west}{\gamma};
\end{tikzpicture} \ . 
\end{equation*}
\end{prop}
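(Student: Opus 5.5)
The plan is to verify directly the three conditions in the definition of a weak module over $(A,m,\eta,L\otimes A,\mu_A,\alpha_A,\beta_A)$. I would do this graphically, and guide each diagrammatic step by the element-wise computation one would carry out in $\mathcal{V}ec(\bbC)$. Reading the diagram for $\gamma_A$, the $A$-strand crosses the $L$-strand, $\gamma$ acts on $L\otimes M$, and then $\rho$ lets $A$ act on the result. Element-wise this is $\gamma_A(x\otimes a\otimes m)=a\cdot(x\cdot m)$. Likewise $\alpha_A(x\otimes a\otimes b)=a\,\alpha(x\otimes b)$, and
\[
\mu_A\big((x\otimes a)\otimes(y\otimes b)\big)=\mu(x\otimes y)\otimes ab+y\otimes a\,\alpha(x\otimes b)-x\otimes b\,\alpha(y\otimes a),
\]
up to the braiding signs that the diagrams encode. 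The tools I would use are: the $A$-module axioms for $\rho$; the $L$-module axiom for $\gamma$; the Leibniz condition \eqref{compatible2}; commutativity $m\circ c_{A,A}=m$; and naturality and symmetry of $c$.

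\textbf{Condition (1).} This is immediate, since $\rho$ is unchanged.

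\textbf{Condition (3).} Next I would check the compatibility \eqref{compatible2} for the new data. The left side is $\gamma_A\circ(\id\otimes\rho)$, that is, $a\cdot x\cdot(b\cdot m)$. Applying \eqref{compatible2} for $(M;\rho,\gamma)$ inside the $\rho$ that carries $a$ gives
\[
a\cdot\big(\alpha(x\otimes b)\cdot m\big)+a\cdot\big(b\cdot(x\cdot m)\big).
\]
Associativity of $\rho$ turns the first term into $\rho\circ(\alpha_A\otimes\id)$. Associativity of $\rho$ together with commutativity of $m$ (to swap $a$ and $b$) turns the second term into $\rho\circ(\id\otimes\gamma_A)$ precomposed with the appropriate braiding. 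Diagrammatically, this amounts to sliding the $\rho$-vertex carrying $a$ past the other vertices using naturality of $c$, and merging two $\rho$'s into one $m$ followed by $\rho$.

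\textbf{Condition (2), the main step.} Here I must show that $\gamma_A$ is a Lie action of $(L\otimes A,\mu_A)$:
\[
\gamma_A\circ(\mu_A\otimes\id)=\gamma_A\circ(\id\otimes\gamma_A)-\gamma_A\circ(\id\otimes\gamma_A)\circ(c\otimes\id).
\]
I would expand $(x\otimes a)\cdot\big((y\otimes b)\cdot m\big)=a\cdot x\cdot(b\cdot y\cdot m)$. By \eqref{compatible2} this equals
\[
a\,\alpha(x\otimes b)\cdot y\cdot m+ab\cdot x\cdot y\cdot m.
\]
Antisymmetrizing, the two $ab$-terms combine, via commutativity of $A$ and the $L$-module axiom for $\gamma$, into $ab\cdot\mu(x\otimes y)\cdot m$. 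The two cross terms are exactly $\gamma_A$ applied to the second and third summands of $\mu_A$. Applying $\gamma_A$ to the first summand of $\mu_A$ gives the matching term $ab\cdot\mu(x\otimes y)\cdot m$ directly.

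I expect this step to be the main obstacle. The difficulty is not conceptual. It lies in the braiding bookkeeping: one must check that the crossings in each of the three diagrams defining $\mu_A$ match, after applying naturality of $c$ and the symmetry $c\circ c=\id$, the crossings produced when two copies of $\gamma_A$ are stacked and the antisymmetrizing braiding is applied. I would handle this by normalizing every diagram to the form in which all $A$-strands are first multiplied together by $m$ and then act once through $\rho$ at the top. Two diagrams in this form can then be compared term by term, relying on commutativity of $m$ to disregard the order of the $A$-inputs.
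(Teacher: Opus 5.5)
Your proposal is correct and is the intended argument: a direct verification of the three weak-module axioms, which involve only $\rho$, $\gamma_A$, $\alpha_A$ and $\mu_A$ and never $\beta_A$. Your plan to bring every term to the form $\rho\circ(\text{product in }A\otimes\text{iterated }\gamma)$ and to match crossings by naturality and symmetric coherence is sound, and the Koszul signs do agree term by term in $s\mathcal{V}ec(\mathbb{C})$. The paper gives no written proof of this proposition, treating it as the same routine diagrammatic check as Proposition~\ref{mainofanchor}, so your sketch supplies more detail than the paper does.
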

The weak module $(M;\rho,\gamma_A)$ will be called the crossed module associated with $(M;\rho,\gamma)$ over
$(A,m,\eta,L\otimes A,\mu_A,\alpha_A,\beta_A).$ We now iterate the above construction. For any weak module $(M;\rho,\gamma) \in {\bf WMod}^{\calC}_A{\big(L\big)},$
define recursively 
\[\gamma_A^{[0]}=\gamma, \ \ \gamma_A^{[r]}=(\gamma_A^{[r-1]})_A \ \ \text{for all} \  r\in \bbZ_{\geq 1}.    \]
Here $(\gamma_A^{[r-1]})_A$ denotes the action obtained from
Proposition~\ref{prop:cross}, applied to the weak module
$(M;\rho,\gamma_A^{[r-1]})$ over the Lie-Rinehart monoid $
(A,m,\eta,L_A^{[r-1]},\mu_A^{[r-1]},\alpha_A^{[r-1]},\beta_A^{[r-1]}). $
Thus Proposition~\ref{prop:cross} produces, step by step, a sequence of weak modules over the iterated Lie-Rinehart monoids constructed above.
\begin{coro}\label{newweakmodule}
Let $(M;\rho,\gamma)$ be a weak module over $(A,m,\eta,L,\mu,\alpha,\beta)$. 
Then, the triple $(M;\rho,\gamma_A^{[r]})$ forms a weak module over $(A,m,\eta,L_A^{[r]},\mu_A^{[r]},\alpha_A^{[r]},\beta_A^{[r]})$ for every $r\in \bbZ_{\geq 0}.$
\end{coro}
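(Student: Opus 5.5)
The plan is induction on $r$, with Proposition~\ref{prop:cross} supplying the inductive step.

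For $r=0$ the definitions give $L_A^{[0]}=L$, $\mu_A^{[0]}=\mu$, $\alpha_A^{[0]}=\alpha$, $\beta_A^{[0]}=\beta$ and $\gamma_A^{[0]}=\gamma$. The claim is then exactly the hypothesis that $(M;\rho,\gamma)$ is a weak module over $(A,m,\eta,L,\mu,\alpha,\beta)$.

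For the inductive step, fix $r\geq 1$ and assume $(M;\rho,\gamma_A^{[r-1]})$ is a weak module over $(A,m,\eta,L_A^{[r-1]},\mu_A^{[r-1]},\alpha_A^{[r-1]},\beta_A^{[r-1]})$. First, I would note that by the preceding corollary (iterating Proposition~\ref{mainofanchor}) this tuple really is a Lie-Rinehart monoid. It has the same commutative monoid $A$, so Proposition~\ref{prop:cross} applies to it. That proposition, applied to the weak module $(M;\rho,\gamma_A^{[r-1]})$, produces the weak module $(M;\rho,(\gamma_A^{[r-1]})_A)$ over
\[
\bigl(A,m,\eta,L_A^{[r-1]}\otimes A,(\mu_A^{[r-1]})_A,(\alpha_A^{[r-1]})_A,(\beta_A^{[r-1]})_A\bigr).
\]
By the recursive definitions, this is precisely $(M;\rho,\gamma_A^{[r]})$ over $(A,m,\eta,L_A^{[r]},\mu_A^{[r]},\alpha_A^{[r]},\beta_A^{[r]})$, which closes the induction.

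There is no real obstacle here, since all the content sits in Proposition~\ref{prop:cross}. The only point needing care is bookkeeping: the input to Proposition~\ref{prop:cross} must be a genuine Lie-Rinehart monoid with commutative $A$, which is what the previous corollary guarantees. One should also check that the $A$-action $\rho$ is left unchanged at each stage, as it is in the construction of Proposition~\ref{prop:cross}.
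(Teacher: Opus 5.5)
Your argument is correct and is essentially the paper's own. The paper states this corollary as the result of iterating Proposition~\ref{prop:cross} step by step, that is, induction on $r$: the preceding corollary ensures each $(A,m,\eta,L_A^{[r-1]},\mu_A^{[r-1]},\alpha_A^{[r-1]},\beta_A^{[r-1]})$ is a Lie-Rinehart monoid, and the recursive definitions identify the output of Proposition~\ref{prop:cross} with the stage-$r$ data.
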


The preceding corollary shows that the assignment
$(M;\rho,\gamma)\longmapsto (M;\rho,\gamma_A^{[r]})$
is compatible with morphisms of weak modules. Hence, for each
$r\in \mathbb{Z}_{\geq 0}$, we obtain the following functor.		
\begin{coro}\label{weakfunctor}
There is a functor 
\begin{align*}
				\mathcal{F}_r:  {\bf WMod}^{\calC}_A{\big(L\big)} &\to {\bf WMod}^{\calC}_A{\big(L_A^{[r]}\big)}\\
				(M;\rho,\gamma) &\mapsto \left(M; \rho, \gamma_A^{[r]}\right),\\
				f:M\to M' &\mapsto f: M \to M',
\end{align*}
where $(M';\rho',\gamma') \in $ ${\bf WMod}^{\calC}_A{\big(L\big)}$. 
\end{coro}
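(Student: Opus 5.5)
The plan is to check the two functor requirements directly; they reduce to what is already known once we verify that $\mathcal{F}_r$ sends morphisms to morphisms. On objects, Corollary~\ref{newweakmodule} already says that for every $(M;\rho,\gamma)\in {\bf WMod}^{\calC}_A(L)$ the triple $(M;\rho,\gamma_A^{[r]})$ is a weak module over $(A,m,\eta,L_A^{[r]},\mu_A^{[r]},\alpha_A^{[r]},\beta_A^{[r]})$. So $\mathcal{F}_r$ is well defined on objects. Since $\mathcal{F}_r$ acts as the identity on underlying morphisms of $\calC$, preservation of identities and of composition is automatic.

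The remaining point is that a homomorphism $f:(M;\rho,\gamma)\to(M';\rho',\gamma')$ of weak $(A,L)$-modules is again a homomorphism $(M;\rho,\gamma_A^{[r]})\to(M';\rho',\gamma_A'^{[r]})$. The $A$-linearity part, $f\circ\rho=\rho'\circ(\id_A\otimes f)$, is unchanged because the $A$-action is not modified. For $L_A^{[r]}$-linearity I will argue by induction on $r$. The case $r=0$ is the hypothesis.

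For the inductive step, suppose $f$ intertwines $\gamma_A^{[r-1]}$ and $\gamma_A'^{[r-1]}$. By Proposition~\ref{prop:cross}, $\gamma_A^{[r]}$ is the composite
\[
\rho\circ(\id_A\otimes\gamma_A^{[r-1]})\circ(c_{L_A^{[r-1]},A}\otimes\id_M),
\]
read off from the diagram. I then postcompose with $f$. First, $A$-linearity of $f$ moves $f$ past $\rho$, giving $\rho'\circ(\id_A\otimes f\gamma_A^{[r-1]})$. Next, the induction hypothesis turns $f\circ\gamma_A^{[r-1]}$ into $\gamma_A'^{[r-1]}\circ(\id\otimes f)$. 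Finally, $\id\otimes f$ acts only on the $M$-strand, so it slides below the braiding $c_{L_A^{[r-1]},A}\otimes\id_M$, using functoriality of $\otimes$ from \eqref{defoftensor}. The result is
\[
f\circ\gamma_A^{[r]}=\gamma_A'^{[r]}\circ(\id_{L_A^{[r]}}\otimes f),
\]
which completes the induction.

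The only real step is this diagram slide. It needs only $A$-linearity of $f$, the induction hypothesis, and interchange in the strict monoidal category; naturality of $c$ is not even required, since $f$ sits on a strand that the braiding does not touch. I expect no genuine obstacle; the only care needed is in reading the string diagram for $\gamma_A$ correctly.
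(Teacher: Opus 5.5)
Your proposal is correct and follows the paper's route: well-definedness on objects comes from Corollary~\ref{newweakmodule}, and the functor is the identity on underlying morphisms. Your induction, which uses $A$-linearity of $f$, the inductive hypothesis, and the interchange law to slide $\id\otimes f$ past $c_{L_A^{[r-1]},A}\otimes\id_M$, correctly supplies the compatibility with morphisms that the paper only asserts.
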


\subsection{Universal enveloping monoids}

In this subsection, we explain how weak modules over a Lie-Rinehart monoid may be realized
as modules over a suitable weak universal enveloping monoid. Throughout this
subsection, we assume that $\mathcal C$ is an additive symmetric monoidal
category admitting tensor monoids and the quotient monoids appearing below.

Let ${\bf Mon}(\mathcal C)$ denote the category whose objects are monoids in $\mathcal C$, and whose morphisms are morphisms in $\mathcal C$ preserving
the multiplication and the unit.
\begin{defi}
Assume that $\mathcal C$ is an additive symmetric monoidal category admitting tensor monoids and the coequalizers appearing below in
${\bf Mon}(\mathcal C)$. 
Let $(A,m,\eta,L,\mu,\alpha,\beta)$
be a Lie-Rinehart monoid in $\mathcal C$. 
Set $X=A\oplus L$ and let $(T(X), m_T, \eta_T)$ be the tensor monoid generated by $X$. 
Denote by
\[
\iota_A:A\rightarrow T(X) \ \text{and} \ 
\iota_L:L\rightarrow T(X)
\]
the canonical morphisms.
Define
\[
R_{\mathrm w}=(A\otimes A)\oplus(L\otimes L)\oplus
(L\otimes A).
\]
We define two morphisms
\[
r_{\mathrm w},s_{\mathrm w}:R_{\mathrm w}\rightarrow T(X)
\]
by specifying their restrictions to the three direct summands.
On $A\otimes A$, set
\[r_A=m_T\circ(\iota_A\otimes\iota_A) \ \text{and} \  s_A=\iota_A\circ m.
\]
On $L\otimes L$, set
\[r_L=m_T\circ(\iota_L\otimes\iota_L) \ \text{and} \ s_L=
m_T\circ(\iota_L\otimes\iota_L)\circ c_{L,L}+\iota_L\circ\mu.
\]
On $L\otimes A$, set
\[r_{\alpha}=m_T\circ(\iota_L\otimes\iota_A) \ \text{and} \ s_{\alpha}=
m_T\circ(\iota_A\otimes\iota_L)\circ c_{L,A}+\iota_A\circ\alpha.
\]
Thus
\[
r_{\mathrm w}=r_A\oplus r_L\oplus r_{\alpha} \ \text{and} \ 
s_{\mathrm w}=s_A\oplus s_L\oplus s_{\alpha}.
\]
By the universal property of the tensor monoid, these morphisms induce
monoid morphisms
\[
\widetilde r_{\mathrm w},\widetilde s_{\mathrm w}:
T(R_{\mathrm w})\rightarrow T(X).
\]
The \emph{weak universal enveloping monoid} of $(A,L,\alpha)$ is defined to
be the coequalizer in ${\bf Mon}(\mathcal C)$:
\[
U^{\mathrm w}_{\mathcal C}(A,L)
=
\operatorname{Coeq}_{{\bf Mon}(\mathcal C)}
\left(
T(R_{\mathrm w})
\overset{\widetilde r_{\mathrm w}}{\underset{\widetilde s_{\mathrm w}}{\rightrightarrows}}T(X)\right).
\]
\end{defi}

\begin{rmk}
Let $B$ be a monoid in $\mathcal C$. To give a monoid morphism $U^{\mathrm w}_{\mathcal C}(A,L)\rightarrow B$
is equivalent to giving morphisms
\[
f_A:A\rightarrow B \ \text{and} \ 
f_L:L\rightarrow B
\]
such that
\[
m_B\circ(f_A\otimes f_A)=f_A\circ m,
\]
\[
m_B\circ(f_L\otimes f_L)=
m_B\circ(f_L\otimes f_L)\circ c_{L,L}
+f_L\circ \mu,
\]
and
\[
m_B\circ(f_L\otimes f_A)=
m_B\circ(f_A\otimes f_L)\circ c_{L,A}
+f_A\circ \alpha.
\]
\end{rmk}

\begin{theor}
Assume that the weak universal enveloping monoid $U^{\mathrm w}_{\mathcal C}(A,L)$
exists. Then there is an equivalence of categories
\[
{\bf Mod}_{\mathcal C}\big(U^{\mathrm w}_{\mathcal C}(A,L)\big)
\simeq
{\bf WMod}^{\mathcal C}_A(L).
\]
\end{theor}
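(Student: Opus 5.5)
The plan is to reduce the equivalence to the universal property in the preceding remark, applied in the "internal" form where modules over a monoid correspond to representations. In $\mathcal C$ I do not have internal Homs, so I cannot literally take $B=\underline{\mathrm{End}}(M)$. Instead I will use the actions $U\otimes M\to M$ directly, together with the universal property of the tensor monoid and of the coequalizer. These are taken in the form that morphisms $T(X)\otimes M\to M$ making $M$ a $T(X)$-module correspond to arbitrary morphisms $X\otimes M\to M$. I will also assume, as part of "admitting the coequalizers appearing below", that $-\otimes M$ preserves the relevant colimits, so that $U^{\mathrm w}_{\mathcal C}(A,L)\otimes M$ is a coequalizer of $T(R_{\mathrm w})\otimes M\rightrightarrows T(X)\otimes M$.

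First I construct a functor $\Phi:{\bf Mod}_{\mathcal C}(U^{\mathrm w}_{\mathcal C}(A,L))\to{\bf WMod}^{\mathcal C}_A(L)$. Let $\pi:T(X)\to U^{\mathrm w}_{\mathcal C}(A,L)$ be the coequalizing monoid map and $\nu:U\otimes M\to M$ a module structure. Set
\[
\rho=\nu\circ(\pi\iota_A\otimes\id_M), \qquad \gamma=\nu\circ(\pi\iota_L\otimes\id_M).
\]
The relation $\pi r_A=\pi s_A$ together with associativity of $\nu$ gives $\rho\circ(\id_A\otimes\rho)=\rho\circ(m\otimes\id_M)$. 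For unitality, I will include the relation $\iota_A\circ\eta=\eta_T$ in $R_{\mathrm w}$ if needed; I note this as a point where the definition may need to be read with unital relations. The relation on $L\otimes L$ gives exactly the Lie-module identity $\gamma(\id\otimes\gamma)-\gamma(\id\otimes\gamma)(c_{L,L}\otimes\id)=\gamma(\mu\otimes\id)$. The relation on $L\otimes A$ gives
\[
\gamma(\id\otimes\rho)=\rho(\id\otimes\gamma)(c_{L,A}\otimes\id)+\rho(\alpha\otimes\id),
\]
which is the compatibility condition \eqref{compatible2} written with string diagrams. On morphisms, $\Phi$ is the identity: a $U$-linear map is both $A$-linear and $L$-linear.

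Conversely, I construct $\Psi$. Given $(M;\rho,\gamma)$, the map $\rho\oplus\gamma:X\otimes M\to M$ (using additivity, so that $X\otimes M\cong (A\otimes M)\oplus(L\otimes M)$) extends uniquely to a $T(X)$-action $\tilde\nu$. The three weak-module axioms say precisely that $\tilde\nu\circ(r_{\mathrm w}\otimes\id)=\tilde\nu\circ(s_{\mathrm w}\otimes\id)$ on $R_{\mathrm w}\otimes M$. I then need to upgrade this to equality $\tilde\nu(\widetilde r_{\mathrm w}\otimes\id)=\tilde\nu(\widetilde s_{\mathrm w}\otimes\id)$ on $T(R_{\mathrm w})\otimes M$, and then descend $\tilde\nu$ along $\pi\otimes\id_M$.

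I expect this descent to be the main obstacle. The coequalizer is taken in ${\bf Mon}(\mathcal C)$, so it identifies the two-sided ideal generated by the relations, not just the images of the generators. My approach is to show that the set of "words" on which the two actions agree is closed under multiplication on both sides by $T(X)$. This holds because $\tilde\nu$ is an action, so $T(X)\otimes T(R_{\mathrm w})\otimes T(X)$-sandwiched relations also act equally. Concretely, I will use the standard description of the monoid coequalizer as $T(X)$ modulo the ideal $T(X)\otimes R_{\mathrm w}\otimes T(X)$, assuming, as is implicit in the hypotheses, that this quotient exists and is preserved by $-\otimes M$. The descended action is associative and unital because $\pi\otimes\pi\otimes\id$ is an epimorphism.

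Finally, $\Phi\Psi$ is the identity because the restriction of the descended action to $A$ and $L$ is $\rho$ and $\gamma$. $\Psi\Phi$ is the identity by uniqueness in the universal property of $T(X)$ and the epimorphism $\pi\otimes\id_M$. Both functors are the identity on underlying morphisms, and $A$- and $L$-linearity implies $U$-linearity by the same uniqueness argument. This yields an isomorphism, in particular an equivalence, of categories.
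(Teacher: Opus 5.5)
Your proof is correct and follows the same route as the paper's. You restrict a $U^{\mathrm w}_{\mathcal C}(A,L)$-action along $\pi\iota_A$ and $\pi\iota_L$; conversely, you extend $\rho\oplus\gamma$ to a $T(X)$-action, check it on $R_{\mathrm w}$, and descend through the coequalizer. The paper does this in a few lines and does not address the points you raise.

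Those caveats are warranted, not pedantic. First, the unit relation $\iota_A\circ\eta=\eta_T$ is genuinely missing from the paper's $R_{\mathrm w}$. For $A=\mathbb C$, $L=0$ in $\mathcal{V}ec(\mathbb C)$, the paper's definition gives $U^{\mathrm w}_{\mathcal C}(A,L)\cong\mathbb C[t]/(t^2-t)\cong\mathbb C\times\mathbb C$, whose module category is not equivalent to ${\bf WMod}^{\mathcal C}_A(L)\simeq\mathcal{V}ec(\mathbb C)$. Second, the descent does need extra input, such as your ideal description of the monoid coequalizer together with right exactness of $\otimes$ (or internal Homs). You should therefore delete your first-paragraph claim that $U^{\mathrm w}_{\mathcal C}(A,L)\otimes M$ is the coequalizer of $T(R_{\mathrm w})\otimes M\rightrightarrows T(X)\otimes M$. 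That claim already fails in $\mathcal{V}ec(\mathbb C)$, because the image of $\widetilde r_{\mathrm w}-\widetilde s_{\mathrm w}$ is in general strictly smaller than the two-sided ideal $\ker\pi$.
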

\begin{proof}
Let $N$ be a left $U^{\mathrm w}_{\mathcal C}(A,L)$-module. By restriction
along the canonical morphisms
\[
A\rightarrow U^{\mathrm w}_{\mathcal C}(A,L) \ \text{and} \ 
L\rightarrow U^{\mathrm w}_{\mathcal C}(A,L),
\]
the object $N$ becomes both a left $A$-module and a left $L$-module. The
coequalizer relations imply precisely that $N$ is a weak module over $(A,L,\alpha)$.

Conversely, if $(M;\rho,\gamma)\in {\bf WMod}^{\mathcal C}_A(L),$
then the $A$-action $\rho$ and the $L$-action $\gamma$ define, by the
universal property of the tensor monoid, an action of $T(A\oplus L)$ on $M$.
The $A$-module relation, the $L$-module relation, and the weak compatibility
condition imply that the two induced actions of $T(R_{\mathrm w})$ on $M$
coincide. Therefore the action of $T(A\oplus L)$ factors uniquely through the coequalizer
$U^{\mathrm w}_{\mathcal C}(A,L).$
This construction is functorial and gives an equivalence of categories.
\end{proof}

\subsection{Left module categories over monoidal categories}
\begin{defi}
Let $(A,m,\eta,L,\mu,\alpha,\beta)$ be a  Lie-Rinehart monoid in $\calC$. 
Let $(\mathfrak{L},\mathfrak{u})$ be another Lie monoid in $\calC$. A morphism $\mathcal{H}: L \rightarrow \mathfrak{L}\otimes A$ is called a \emph{crossed homomorphism} if it satisfies the following axiom:
\begin{equation*}
				\begin{tikzpicture}[anchorbase]
					\draw[bmod] (0, 0) -- (0.25, 0.5) -- (0.25, 1)--(0,1.5);
					\draw[bmod] (0.5, 0) -- (0.25, 0.5);
					\draw[bmod] (0.5, 1.5) -- (0.25, 1);
					\btoken{0.25, 1}{west}{\mathcal{H}};
					\btoken{0.25, 0.5}{west}{\mu};
				\end{tikzpicture} \ = \ 
				\begin{tikzpicture}[anchorbase]
					\draw[bmod] (0, 0) -- (0, 1) -- (0.25, 1.25)--(0.25,1.5);
					\draw[bmod] (0.25, 0) --(0.25,0.25)-- (-0.25, 0.75) -- (-0.25, 1.5);
					\draw[bmod] (0.25, 0.25) -- (0.5, 0.5) -- (0.5, 1)--(0.25,1.25);
					\btoken{0.25, 1.25}{west}{\alpha};
					\btoken{0.25, 0.25}{west}{\mathcal{H}};
				\end{tikzpicture}\ - \ 
				\begin{tikzpicture}[anchorbase]
					\draw[bmod] (0, 0) -- (0, 0.25) -- (-0.25, 0.5)--(-0.25,1.5);
					\draw[bmod] (0, 0.25) -- (0.5, 0.75) -- (0.5, 1)--(0.25,1.25)--(0.25,1.5);
					\draw[bmod] (0.5, 0) -- (0.5, 0.25) -- (0, 0.75)--(0,1)--(0.25,1.25);
					\btoken{0.25, 1.25}{west}{\alpha};
					\btoken{0, 0.25}{east}{\mathcal{H}};
				\end{tikzpicture} \ + \ 
				\begin{tikzpicture}[anchorbase]
					\draw[bmod] (0, 0) -- (0, 0.25) -- (-0.25, 0.5)--(-0.25,1)--(0,1.25)--(0,1.5);
					\draw[bmod] (0, 0.25) -- (0.75,1.25)--(0.75,1.5);
					\draw[bmod] (0.75, 0.25) -- (1, 0.5) -- (1, 1)--(0.75,1.25);
					\draw[bmod] (0.75, 0) -- (0.75, 0.25) --(0,1.25);
					\btoken{0, 1.25}{east}{\mathfrak{u}};
					\btoken{0.75, 1.25}{west}{m};
					\btoken{0, 0.25}{east}{\mathcal{H}};
					\btoken{0.75, 0.25}{west}{\mathcal{H}};
				\end{tikzpicture} \ .
\end{equation*}
\end{defi}
		
Crossed homomorphisms provide a mechanism for producing new weak modules from existing ones. More precisely, we shall prove that a crossed homomorphism $\mathcal{H}$, together with a weak module over $(A,L,\alpha)$ and a module over the Lie monoid $\mathfrak{L}$, canonically induces another weak module.
		
\begin{lem}\label{main1}
Let $(A,m,\eta,L,\mu,\alpha,\beta)$ be a Lie-Rinehart monoid in $\calC$, and let $(M;\rho,\gamma)\in {\bf WMod}^{\mathcal C}_A(L).$
Suppose $(\mathfrak L,\mathfrak u)$ is another Lie monoid in $\mathcal C$, $(V,\psi)$ an $\mathfrak L$-module, and 
$\mathcal H:L\longrightarrow \mathfrak L\otimes A$ a crossed homomorphism. Define
\begin{equation}\label{newhom}
				\psi \vartriangleright\rho \ = \ 
				\begin{tikzpicture}[anchorbase]
					\draw[bmod] (0, 0) -- (0, 0.5) -- (0.5, 1)--(0.5,1.25);
					\draw[bmod] (0, 1.25) -- (0,1)--(0.5,0.5)--(0.5,0);
					\draw[bmod] (0.5, 1) -- (1, 0.5) -- (1, 0);
					\btoken{0.5, 1}{west}{\rho};
				\end{tikzpicture} \ \ \ \ \text{and} \ \ \ \
				\psi\vartriangleright \gamma \ =  \ 
				\begin{tikzpicture}[anchorbase]
					\draw[bmod] (0, 0) -- (0, 0.5) -- (0.5, 1)--(0.5,1.25);
					\draw[bmod] (0, 1.25) -- (0,1)--(0.5,0.5)--(0.5,0);
					\draw[bmod] (0.5, 1) -- (1, 0.5) -- (1, 0);
					\btoken{0.5, 1}{west}{\gamma};
				\end{tikzpicture} \ + \ 
				\begin{tikzpicture}[anchorbase]
					\draw[bmod] (0, 0) -- (0, 0.25) -- (-0.25, 0.5)--(-0.25,0.75)--(0,1)--(0,1.25);
					\draw[bmod] (0, 0.25) -- (0.75,1)--(0.75,1.25);
					\draw[bmod] (0.75, 1) -- (1, 0.75) -- (1, 0);
					\draw[bmod] (0.5, 0) -- (0.5, 0.5) --(0,1);
					\btoken{0, 1}{east}{\psi};
					\btoken{0.75, 1}{west}{\rho};
					\btoken{0, 0.25}{east}{\mathcal{H}};
				\end{tikzpicture} \ .
\end{equation}
Then we have  $(V\otimes M ; \psi \vartriangleright \rho , \psi \vartriangleright \gamma ) \in$ ${\bf WMod}^{\calC}_A{\big(L\big)}$.
\end{lem}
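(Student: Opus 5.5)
We have to check three things for $V\otimes M$: that $\psi\vartriangleright\rho$ is an $A$-module action, that $\psi\vartriangleright\gamma$ is an $L$-module action, and that the pair satisfies the compatibility condition \eqref{compatible2}. Throughout we use string-diagram manipulations in the strict symmetric monoidal category $\calC$. We rely on naturality of the braiding and on the symmetry $c^2=\id$ to slide strands past each other. It helps to read the definitions elementwise, in Sweedler-type notation: $\mathcal H(x)=\sum x_{\mathfrak L}\otimes x_A$. Then
\[
a\cdot(v\otimes m)=v\otimes am,\qquad x\cdot(v\otimes m)=v\otimes x m+ x_{\mathfrak L}v\otimes x_A m,
\]
with the symmetric-monoidal signs inserted by the braidings. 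We translate each step back into diagrams at the end.

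\emph{Step 1, the $A$-module structure.} The action $\psi\vartriangleright\rho$ is just $\id_V\otimes\rho$, precomposed with the braiding $c_{A,V}$. Associativity follows from associativity of $\rho$ together with naturality of $c$; here we use \eqref{braid1}/\eqref{braid2} to split $c_{A\otimes A,V}$. Unitality follows from $c_{I,V}=\id$.

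\emph{Step 2, the compatibility condition.} This is the cheapest of the three checks. We expand the left side of \eqref{compatible2} for the new structures. The first summand of $\psi\vartriangleright\gamma$ contributes $v\otimes x(am)$, and we rewrite it using \eqref{compatible2} for $M$ as $v\otimes(\alpha(x,a)m+a\,xm)$. The second summand contributes $x_{\mathfrak L}v\otimes x_A(am)$, which equals $x_{\mathfrak L}v\otimes a(x_Am)$ by commutativity and associativity of $A$ acting through $\rho$. In diagrams this uses commutativity of $m$ with one braiding and associativity of $\rho$. Collecting terms gives exactly $\alpha(x,a)\cdot(v\otimes m)+a\cdot(x\cdot(v\otimes m))$.

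\emph{Step 3, the $L$-module axiom.} This is the main obstacle. We must show
\[
x\cdot(y\cdot w)-y\cdot(x\cdot w)=\mu(x,y)\cdot w,\qquad w=v\otimes m,
\]
where the swap of $x$ and $y$ is implemented by $c_{L,L}$. Expanding the left side produces four types of terms:
\begin{enumerate}
\item[(i)] $v\otimes(x(ym)-y(xm))$, which equals $v\otimes\mu(x,y)m$ because $\gamma$ is an $L$-action. This matches the first summand of $\mu(x,y)\cdot w$.
\item[(ii)] Mixed terms $y_{\mathfrak L}v\otimes x(y_Am)$ and their swaps. Using \eqref{compatible2} for $M$, we split each as $y_{\mathfrak L}v\otimes(\alpha(x,y_A)m+y_A\,xm)$.
\item[(iii)] The terms $x_{\mathfrak L}v\otimes x_A(ym)$ and their swaps.
\item[(iv)] The terms $x_{\mathfrak L}y_{\mathfrak L}v\otimes x_Ay_Am$ minus their swaps. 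Since $\psi$ is a Lie action and $A$ is commutative, these combine to $\mathfrak u(x_{\mathfrak L},y_{\mathfrak L})v\otimes x_Ay_Am$.
\end{enumerate}
The pieces $y_{\mathfrak L}v\otimes y_A\,xm$ from (ii) cancel against the pieces $y_{\mathfrak L}v\otimes y_A(xm)$ arising in (iii) with the opposite sign, and symmetrically with $x$ and $y$ interchanged. What survives is
\[
y_{\mathfrak L}v\otimes\alpha(x,y_A)m-x_{\mathfrak L}v\otimes\alpha(y,x_A)m+\mathfrak u(x_{\mathfrak L},y_{\mathfrak L})v\otimes x_Ay_Am .
\]
By the crossed homomorphism axiom, this is precisely $\psi\vartriangleright\rho$ applied to $\mathcal H(\mu(x,y))\otimes v\otimes m$. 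Note that $\mathcal H(\mu(x,y))$ has components in $\mathfrak L\otimes A$, which act on $V$ via $\psi$ and on $M$ via $\rho$. That is the second summand of $\mu(x,y)\cdot w$, which finishes the check.

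The delicate point is bookkeeping of braidings, in particular matching the swapped terms with the second diagram of the crossed homomorphism axiom. For this we repeatedly use symmetry $c_{W,V}c_{V,W}=\id$ and naturality, so that all diagrams can be brought to the normal form appearing in the axiom. Once this is done the verification closes, and $(V\otimes M;\psi\vartriangleright\rho,\psi\vartriangleright\gamma)\in{\bf WMod}^{\calC}_A(L)$.
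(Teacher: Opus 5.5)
Your proposal is correct and follows essentially the same route as the paper. The $A$-module structure comes from Lemma~\ref{tensorofamod}, and the compatibility condition follows from \eqref{compatible2} for $M$ together with associativity of $\rho$ and commutativity of $A$. The $L$-module axiom is proved exactly as in the paper: you expand into the same four families of terms, cancel the mixed terms after applying \eqref{compatible2}, merge the double-$\mathcal H$ terms using the $\mathfrak L$-action on $V$ and the $A$-module axiom for $\rho$, and close with the crossed homomorphism identity.

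The only difference is presentation. You track braidings as permutations in Sweedler-type notation, which is legitimate by coherence for symmetric monoidal categories, whereas the paper writes out each string-diagram move explicitly.
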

\begin{proof}
Thanks to Lemma \ref{tensorofamod}, the pair $(V\otimes M, \psi \vartriangleright \rho)$ forms an $A$-module. It remains to prove that $(V\otimes M, \psi \vartriangleright \gamma )$ is an $L$-module and the triple $(V\otimes M ; \psi \vartriangleright \rho , \psi \vartriangleright \gamma )$ satisfies the compatible condition \eqref{compatible2}. To see the first assertion, it suffices to verify \eqref{fig:ten items}.
\begin{equation}\label{fig:ten items}
				\begin{tikzpicture}[anchorbase]
					\draw[bmod] (0, 0) -- (0, .75) -- (.5, 1.25)--(.5,1.5);
					
					\draw[bmod] (.25, 0) -- (.25, .5) -- (.75, 1)--(.5,1.25);
					\draw[bmod] (.75, 0) -- (.75, .5) -- (0, 1.25)--(0, 1.5);
					\draw[bmod] (1, 0) -- (1, .75)--(.75,1);
					\btoken{.5, 1.25}{west}{\gamma};
					\btoken{.75, 1}{west}{\gamma};
				\end{tikzpicture} \ + \ 
				\begin{tikzpicture}[anchorbase]
					\draw[bmod] (0, 0) -- (0, .75) -- (-.25, 1)--(0,1.25)--(0,1.5);
					\draw[bmod] (0, .75) -- (0.5,1.25)--(.5,1.5);
					\draw[bmod] (.25, 0) -- (.25, .5) -- (.75, 1)--(.5,1.25);
					\draw[bmod] (.75, 0) -- (.75, .5) -- (0, 1.25);
					\draw[bmod] (1, 0) -- (1, .75)--(.75,1);
					\btoken{0, 1.25}{east}{\psi};
					\btoken{.5, 1.25}{west}{\rho};
					\btoken{.75, 1}{west}{\gamma};
					\btoken{0, .75}{east}{\mathcal{H}};
				\end{tikzpicture} \ + \  
				\begin{tikzpicture}[anchorbase]
					\draw[bmod] (0, 0) -- (0, .5) -- (.75, 1.25)--(.75,1.5);
					\draw[bmod] (.5,0) -- (0.5,.25)--(.25,.5)--(.5,.75)--(.5,1.5);
					\draw[bmod] (.5, 0.25) -- (.75, .5) -- (.5, .75);
					\draw[bmod] (.75, .5) -- (.75, .75) -- (1, 1);
					\draw[bmod] (1, 0) -- (1, .25)--(.75,.5);
					\draw[bmod] (1.25, 0) -- (1.25, .75)--(1,1)--(.75,1.25);
					\btoken{0.5, .75}{north}{\psi};
					\btoken{1, 1}{west}{\rho};
					\btoken{.75, 1.25}{west}{\gamma};
					\btoken{0.5, .25}{east}{\mathcal{H}};
				\end{tikzpicture} \ + \ 
				\begin{tikzpicture}[anchorbase]
					\draw[bmod] (0, 0) -- (0, .75) -- (-.25, 1)--(0,1.25)--(0,1.5);
					\draw[bmod] (0,0.75) -- (0.5,1.25)--(.5,1.5);
					\draw[bmod] (.5, 0) -- (.5, .25) -- (.25, .5)--(.5,.75)--(0,1.25);
					\draw[bmod] (.5, 0.25) -- (1, .75) -- (0.5, 1.25);
					\draw[bmod] (1,0)--(1,0.25)--(.5,.75);
					\draw[bmod] (1.25, 0) -- (1.25, .5)--(1,.75);
					\btoken{0, 1.25}{east}{\psi};
					\btoken{.5, 1.25}{west}{\rho};
					\btoken{1, .75}{west}{\rho};
					\btoken{0, .75}{east}{\mathcal{H}};
					\btoken{0.5, .75}{east}{\psi};
					\btoken{0.5, .25}{east}{\mathcal{H}};
				\end{tikzpicture}  \ - \ 
				\begin{tikzpicture}[anchorbase]
					\draw[bmod] (0, 0) -- (0, .5) -- (.25, .75)--(0.25,1)--(0.5,1.25)--(0.5,1.5);
					\draw[bmod] (0.5,0) -- (0.5,.5)--(.25,.75);
					\draw[bmod] (.75, 0) -- (.75, .75) -- (.25, 1.25)--(.25,1.5);
					\draw[bmod] (1, 0) -- (1, .75) -- (0.5, 1.25);
					\btoken{0.5, 1.25}{west}{\gamma};
					\btoken{.25, .75}{east}{\mu};
				\end{tikzpicture} \ - \ 
				\begin{tikzpicture}[anchorbase]
					\draw[bmod] (0, 0) -- (0, .25) -- (.25, .5)--(0.25,.75)--(0,1)--(0.25,1.25)--(0.25,1.5);
					\draw[bmod] (0.25,0.75) -- (0.75,1.25)--(.75,1.5);
					\draw[bmod] (.5, 0) --(.5,.25)-- (.25, .5) -- (.25, .75);
					\draw[bmod] (.75, 0) -- (.75, .75) -- (0.25, 1.25);
					\draw[bmod] (1, 0) -- (1, 1) -- (0.75, 1.25);
					\btoken{0.25, 1.25}{east}{\psi};
					\btoken{.25, .5}{east}{\mu};
					\btoken{.25, .75}{east}{\mathcal{H}};
					\btoken{.75, 1.25}{west}{\rho};
				\end{tikzpicture} 
\end{equation}
\[=\]
\begin{equation*}
				\begin{tikzpicture}[anchorbase]
					\draw[bmod] (0, 0) -- (0, .25) -- (.75, 1)--(.5,1.25)--(.5,1.5);
					\draw[bmod] (0.25, 0) -- (0.25,1)--(.5,1.25);
					\draw[bmod] (.75, 0) -- (.75, .5) -- (0, 1.25)--(0,1.5);
					\draw[bmod] (1, 0) -- (1, .75) -- (.75, 1);
					\btoken{.5, 1.25}{west}{\gamma};
					\btoken{1, .75}{west}{\gamma};
				\end{tikzpicture}\ + \ 
				\begin{tikzpicture}[anchorbase]
					\draw[bmod] (0, 0) -- (1, 1) -- (.75, 1.25)--(.75,1.5);
					\draw[bmod] (.25, 0) -- (.25,.75)--(0,1)--(.25,1.25)--(.25,1.5);
					\draw[bmod] (.25, .75) -- (.75, 1.25);
					\draw[bmod] (1,0) -- (1, 0.5) -- (0.25, 1.25);
					\draw[bmod] (1.25, 0) -- (1.25, .75) -- (1, 1);
					\btoken{.75, 1.25}{west}{\rho};
					\btoken{1, 1}{west}{\gamma};
					\btoken{0.25, 1.25}{east}{\psi};
					\btoken{0.25, .75}{east}{\mathcal{H}}; 
				\end{tikzpicture} \ + \ 
				\begin{tikzpicture}[anchorbase]
					\draw[bmod] (0.75,0)--(0, 0.25) -- (0, .5) -- (.75, 1.25)--(.75,1.5);
					\draw[bmod] (.5,0) -- (0.5,.25)--(.25,.5)--(.5,.75)--(.5,1.5);
					\draw[bmod] (.5, 0.25) -- (.75, .5) -- (.5, .75);
					\draw[bmod] (.75, .5) -- (.75, .75) -- (1, 1);
					\draw[bmod] (1, 0) -- (1, .25)--(.75,.5);
					\draw[bmod] (1.25, 0) -- (1.25, .75)--(1,1)--(.75,1.25);
					\btoken{0.5, .75}{north}{\psi};
					\btoken{1, 1}{west}{\rho};
					\btoken{.75, 1.25}{west}{\gamma};
					\btoken{0.5, .25}{west}{\mathcal{H}};
				\end{tikzpicture} \ + \ 
				\begin{tikzpicture}[anchorbase]
					\draw[bmod] (0.5, 0) -- (0.5,0.25) -- (0.25, .5)--(0.5,.75)--(0,1.25)--(0,1.5);
					\draw[bmod] (0.5, .25) -- (1,.75)--(.5,1.25);
					\draw[bmod] (.75, 0) -- (0, .25)--(0,.75)--(-.25,1)--(0,1.25)--(0,1.5);
					\draw[bmod] (0, .75) -- (0.5, 1.25)--(.5,1.5);
					\draw[bmod] (1, 0) -- (1, .25) -- (0.5, .75);
					\draw[bmod] (1.25, 0) -- (1.25, .5) -- (1, .75);
					\btoken{1, .75}{west}{\rho};
					\btoken{.5, .75}{west}{\psi};
					\btoken{0, 1.25}{east}{\psi};
					\btoken{0, 0.75}{east}{\mathcal{H}}; 
					\btoken{0.5, .25}{west}{\mathcal{H}}; 
					\btoken{.5, 1.25}{west}{\rho};
				\end{tikzpicture} \ .
\end{equation*}
Since $(M,\gamma)$ is an $L$-module, we obtain 
\begin{equation*}
				\begin{tikzpicture}[anchorbase]
					\draw[bmod] (0, 0) -- (0, .75) -- (.5, 1.25)--(.5,1.5);
					
					\draw[bmod] (.25, 0) -- (.25, .5) -- (.75, 1)--(.5,1.25);
					\draw[bmod] (.75, 0) -- (.75, .5) -- (0, 1.25)--(0, 1.5);
					\draw[bmod] (1, 0) -- (1, .75)--(.75,1);
					\btoken{.5, 1.25}{west}{\gamma};
					\btoken{.75, 1}{west}{\gamma};
				\end{tikzpicture} \ = \ 
				\begin{tikzpicture}[anchorbase]
					\draw[bmod] (0, 0) -- (0, .5) -- (.25, .75)--(0.25,1)--(0.5,1.25)--(0.5,1.5);
					\draw[bmod] (0.5,0) -- (0.5,.5)--(.25,.75);
					\draw[bmod] (.75, 0) -- (.75, .75) -- (.25, 1.25)--(.25,1.5);
					\draw[bmod] (1, 0) -- (1, .75) -- (0.5, 1.25);
					\btoken{0.5, 1.25}{west}{\gamma};
					\btoken{.25, .75}{east}{\mu};
				\end{tikzpicture} \ + \ 
				\begin{tikzpicture}[anchorbase]
					\draw[bmod] (0, 0) -- (0, .25) -- (.75, 1)--(.5,1.25)--(.5,1.5);
					\draw[bmod] (0.25, 0) -- (0.25,1)--(.5,1.25);
					\draw[bmod] (.75, 0) -- (.75, .5) -- (0, 1.25)--(0,1.5);
					\draw[bmod] (1, 0) -- (1, .75) -- (.75, 1);
					\btoken{.5, 1.25}{west}{\gamma};
					\btoken{1, .75}{west}{\gamma};
				\end{tikzpicture} \ .     
\end{equation*}
By the condition that $(M;\rho,\gamma)$ is a weak module, we have 
\begin{equation} \label{fig:three}
				\begin{tikzpicture}[anchorbase]
					\draw[bmod] (0, 0) -- (0, .5) -- (.75, 1.25)--(.75,1.5);
					\draw[bmod] (.5,0) -- (0.5,.25)--(.25,.5)--(.5,.75)--(.5,1.5);
					\draw[bmod] (.5, 0.25) -- (.75, .5) -- (.5, .75);
					\draw[bmod] (.75, .5) -- (.75, .75) -- (1, 1);
					\draw[bmod] (1, 0) -- (1, .25)--(.75,.5);
					\draw[bmod] (1.25, 0) -- (1.25, .75)--(1,1)--(.75,1.25);
					
					\btoken{0.5, .75}{north}{\psi};
					\btoken{1, 1}{west}{\rho};
					\btoken{.75, 1.25}{west}{\gamma};
					\btoken{0.5, .25}{east}{\mathcal{H}};
				\end{tikzpicture} \ = \ 
				\begin{tikzpicture}[anchorbase]
					\draw[bmod] (0, 0) -- (0, .75) -- (.5, 1.25)--(.75,1.25)--(1,1.5)--(1,1.75);
					\draw[bmod] (.5,0) -- (0.5,.25)--(.25,.5)--(.25,.75)--(.5,1)--(.5,1.75);
					\draw[bmod] (.5, 0.25) -- (.75, .5) -- (.75, 1.25) -- (1, 1.5)-- (1, 1.75);
					\draw[bmod] (1, 0) -- (1, .5) -- (.5, 1);
					\draw[bmod] (1.5, 0) -- (1.5, 1)--(1,1.5);
					\draw[bmod] (.75, 1.25) -- (1.25, 1.25);
					\btoken{0.5, 1}{north}{\psi};
					\btoken{1, 1.5}{west}{\rho};
					\btoken{1.25, 1.25}{west}{\gamma};
					\btoken{0.5, .25}{east}{\mathcal{H}};
				\end{tikzpicture} \ + \ 
				\begin{tikzpicture}[anchorbase]
					\draw[bmod] (0, 0) -- (0, .5) -- (1, 1.5)--(1,1.75);
					\draw[bmod] (.5,0) -- (0.5,.25)--(.25,.5)--(.5,.75)--(.5,1.75);
					\draw[bmod] (.5, 0.25) -- (1, .75) -- (1, 1) -- (.75, 1.25);
					\draw[bmod] (1, 0) -- (1, .25) -- (.5, .75);
					\draw[bmod] (1.25, 0) -- (1.25, 1.25)--(1,1.5);
					\btoken{0.75, 1.25}{west}{\alpha};
					\btoken{1, 1.5}{west}{\rho};
					\btoken{.5, .75}{west}{\psi};
					\btoken{0.5, .25}{east}{\mathcal{H}};
				\end{tikzpicture}    
\end{equation}
and
\begin{equation}\label{fig:nine}
				\begin{tikzpicture}[anchorbase]
					\draw[bmod] (0.75,0)--(0, 0.25) -- (0, .5) -- (.75, 1.25)--(.75,1.5);
					\draw[bmod] (.5,0) -- (0.5,.25)--(.25,.5)--(.5,.75)--(.5,1.5);
					\draw[bmod] (.5, 0.25) -- (.75, .5) -- (.5, .75);
					\draw[bmod] (.75, .5) -- (.75, .75) -- (1, 1);
					\draw[bmod] (1, 0) -- (1, .25)--(.75,.5);
					\draw[bmod] (1.25, 0) -- (1.25, .75)--(1,1)--(.75,1.25);
					\btoken{0.5, .75}{north}{\psi};
					\btoken{1, 1}{west}{\rho};
					\btoken{.75, 1.25}{west}{\gamma};
					\btoken{0.5, .25}{west}{\mathcal{H}};
				\end{tikzpicture} \ = \ 
				\begin{tikzpicture}[anchorbase]
					\draw[bmod] (0.75, 0) -- (0, .25)--(0,.75) -- (.5, 1.25)--(.75,1.25)--(1,1.5)--(1,1.75);
					\draw[bmod] (.5,0) -- (0.5,.25)--(.25,.5)--(.25,.75)--(.5,1)--(.5,1.75);
					\draw[bmod] (.5, 0.25) -- (.75, .5) -- (.75, 1.25) -- (1, 1.5)-- (1, 1.75);
					\draw[bmod] (1, 0) -- (1, .5) -- (.5, 1);
					\draw[bmod] (1.5, 0) -- (1.5, 1)--(1,1.5);
					\draw[bmod] (.75, 1.25) -- (1.25, 1.25);
					\btoken{0.5, 1}{north}{\psi};
					\btoken{1, 1.5}{west}{\rho};
					\btoken{1.25, 1.25}{west}{\gamma};
					\btoken{0.5, .25}{west}{\mathcal{H}};
				\end{tikzpicture} \ + \ 
				\begin{tikzpicture}[anchorbase]
					\draw[bmod] (.75,0)--(0, 0.25) -- (0, .5) -- (1, 1.5)--(1,1.75);
					\draw[bmod] (.5,0) -- (0.5,.25)--(.25,.5)--(.5,.75)--(.5,1.75);
					\draw[bmod] (.5, 0.25) -- (1, .75) -- (1, 1) -- (.75, 1.25);
					\draw[bmod] (1, 0) -- (1, .25) -- (.5, .75);
					\draw[bmod] (1.25, 0) -- (1.25, 1.25)--(1,1.5);
					\btoken{0.75, 1.25}{west}{\alpha};
					\btoken{1, 1.5}{west}{\rho};
					\btoken{.5, .75}{west}{\psi};
					\btoken{0.5, .25}{west}{\mathcal{H}};
				\end{tikzpicture}  \ . 
\end{equation}
For the braiding $\big(c_{L,L\otimes V}, \id_L, (\psi\otimes \id_A)\circ (\id_{\mathfrak{L}}\otimes c_{A,V})\circ (\mathcal{H}\otimes \id_V)\big),$
its natural isomorphisms give us 
\begin{equation*}
				\begin{tikzpicture}[anchorbase]
					\draw[bmod] (0, 0) -- (0, .75) -- (.5, 1.25)--(.75,1.25)--(1,1.5)--(1,1.75);
					\draw[bmod] (.5,0) -- (0.5,.25)--(.25,.5)--(.25,.75)--(.5,1)--(.5,1.75);
					\draw[bmod] (.5, 0.25) -- (.75, .5) -- (.75, 1.25) -- (1, 1.5)-- (1, 1.75);
					\draw[bmod] (1, 0) -- (1, .5) -- (.5, 1);
					\draw[bmod] (1.5, 0) -- (1.5, 1)--(1,1.5);
					\draw[bmod] (.75, 1.25) -- (1.25, 1.25);
					\btoken{0.5, 1}{north}{\psi};
					\btoken{1, 1.5}{west}{\rho};
					\btoken{1.25, 1.25}{west}{\gamma};
					\btoken{0.5, .25}{east}{\mathcal{H}};
				\end{tikzpicture} \ = \ 
				\begin{tikzpicture}[anchorbase]
					\draw[bmod] (0, 0) -- (1, 1) -- (.75, 1.25)--(.75,1.5);
					\draw[bmod] (.25, 0) -- (.25,.75)--(0,1)--(.25,1.25)--(.25,1.5);
					\draw[bmod] (.25, .75) -- (.75, 1.25);
					\draw[bmod] (1,0) -- (1, 0.5) -- (0.25, 1.25);
					\draw[bmod] (1.25, 0) -- (1.25, .75) -- (1, 1);
					\btoken{.75, 1.25}{west}{\rho};
					\btoken{1, 1}{west}{\gamma};
					\btoken{0.25, 1.25}{east}{\psi};
					\btoken{0.25, .75}{east}{\mathcal{H}}; 
				\end{tikzpicture}  \ \ \text{and} \ \ 
				\begin{tikzpicture}[anchorbase]
					\draw[bmod] (0.75, 0) -- (0, .25)--(0,.75) -- (.5, 1.25)--(.75,1.25)--(1,1.5)--(1,1.75);
					\draw[bmod] (.5,0) -- (0.5,.25)--(.25,.5)--(.25,.75)--(.5,1)--(.5,1.75);
					\draw[bmod] (.5, 0.25) -- (.75, .5) -- (.75, 1.25) -- (1, 1.5)-- (1, 1.75);
					\draw[bmod] (1, 0) -- (1, .5) -- (.5, 1);
					\draw[bmod] (1.5, 0) -- (1.5, 1)--(1,1.5);
					\draw[bmod] (.75, 1.25) -- (1.25, 1.25);
					\btoken{0.5, 1}{north}{\psi};
					\btoken{1, 1.5}{west}{\rho};
					\btoken{1.25, 1.25}{west}{\gamma};
					\btoken{0.5, .25}{west}{\mathcal{H}};
				\end{tikzpicture} \ = \ 
				\begin{tikzpicture}[anchorbase]
					\draw[bmod] (0, 0) -- (0, .75) -- (-.25, 1)--(0,1.25)--(0,1.5);
					\draw[bmod] (0, .75) -- (0.5,1.25)--(.5,1.5);
					\draw[bmod] (.25, 0) -- (.25, 0.25) -- (-.25,.25)--(-.25,.5)--(.25,.5)-- (.75, 1)--(.5,1.25);
					\draw[bmod] (.75, 0) -- (.75, .5) -- (0, 1.25);
					\draw[bmod] (1, 0) -- (1, .75)--(.75,1);
					\btoken{0, 1.25}{east}{\psi};
					\btoken{.5, 1.25}{west}{\rho};
					\btoken{.75, 1}{west}{\gamma};
					\btoken{0, .75}{east}{\mathcal{H}};
				\end{tikzpicture}  \ = \ 
				\begin{tikzpicture}[anchorbase]
					\draw[bmod] (0, 0) -- (0, .75) -- (-.25, 1)--(0,1.25)--(0,1.5);
					\draw[bmod] (0, .75) -- (0.5,1.25)--(.5,1.5);
					\draw[bmod] (.25, 0) -- (.25, .5) -- (.75, 1)--(.5,1.25);
					\draw[bmod] (.75, 0) -- (.75, .5) -- (0, 1.25);
					\draw[bmod] (1, 0) -- (1, .75)--(.75,1);
					\btoken{0, 1.25}{east}{\psi};
					\btoken{.5, 1.25}{west}{\rho};
					\btoken{.75, 1}{west}{\gamma};
					\btoken{0, .75}{east}{\mathcal{H}};
				\end{tikzpicture} \ .
\end{equation*}		
Using the naturality of the braidings $(c_{L,\mathfrak{L}\otimes V}, \id_L,\psi)$ and $(c_{L\otimes A,V},\alpha,\id_V)$, we get 
\begin{equation}\label{fig:three b}
				\begin{tikzpicture}[anchorbase]
					\draw[bmod] (0, 0) -- (0, .5) -- (1, 1.5)--(1,1.75);
					\draw[bmod] (.5,0) -- (0.5,.25)--(.25,.5)--(.5,.75)--(.5,1.75);
					\draw[bmod] (.5, 0.25) -- (1, .75) -- (1, 1) -- (.75, 1.25);
					\draw[bmod] (1, 0) -- (1, .25) -- (.5, .75);
					\draw[bmod] (1.25, 0) -- (1.25, 1.25)--(1,1.5);
					\btoken{0.75, 1.25}{west}{\alpha};
					\btoken{1, 1.5}{west}{\rho};
					\btoken{.5, .75}{west}{\psi};
					\btoken{0.5, .25}{east}{\mathcal{H}};
				\end{tikzpicture} \ = \ 
				\begin{tikzpicture}[anchorbase]
					\draw[bmod] (0, 0) -- (0, .25) -- (1.25, 1.5)--(1.25,1.75);
					\draw[bmod] (.5,0) -- (0.5,.25)--(.25,.5)--(.25,1.25)--(.5,1.5)--(.5,1.75);
					\draw[bmod] (.5, 0.25) -- (1.25, 1) -- (1, 1.25);
					\draw[bmod] (.75, 0) -- (.75, 1.25) -- (.5, 1.5);
					\draw[bmod] (1.5, 0) -- (1.5, 1.25)--(1.25,1.5);
					\btoken{1, 1.25}{west}{\alpha};
					\btoken{1.25, 1.5}{west}{\rho};
					\btoken{.5, 1.5}{west}{\psi};
					\btoken{0.5, .25}{east}{\mathcal{H}};
				\end{tikzpicture} \ + \ 
				\begin{tikzpicture}[anchorbase]
					\draw[bmod] (0, 0) -- (0, .25) -- (1.25, 1.5)--(1.25,1.75);
					\draw[bmod] (.5,0) -- (0.5,.25)--(.25,.5)--(.25,1.25)--(.5,1.5)--(.5,1.75);
					\draw[bmod] (.5, 0.25) -- (1, .75) -- (.75, 1);
					\draw[bmod] (1.25, 0) -- (1.25, .75) -- (.5, 1.5);
					\draw[bmod] (1.5, 0) -- (1.5, 1.25)--(1.25,1.5);
					\btoken{.75, 1}{east}{\alpha};
					\btoken{1.25, 1.5}{west}{\rho};
					\btoken{.5, 1.5}{west}{\psi};
					\btoken{0.5, .25}{east}{\mathcal{H}};
				\end{tikzpicture} \ .
\end{equation}
Applying these together with Lemma \ref{identityofbraid}, we have 
\begin{equation}\label{fig:nine b}
				\begin{tikzpicture}[anchorbase]
					\draw[bmod] (.75,0)--(0, 0.25) -- (0, .5) -- (1, 1.5)--(1,1.75);
					\draw[bmod] (.5,0) -- (0.5,.25)--(.25,.5)--(.5,.75)--(.5,1.75);
					\draw[bmod] (.5, 0.25) -- (1, .75) -- (1, 1) -- (.75, 1.25);
					\draw[bmod] (1, 0) -- (1, .25) -- (.5, .75);
					\draw[bmod] (1.25, 0) -- (1.25, 1.25)--(1,1.5);
					\btoken{0.75, 1.25}{west}{\alpha};
					\btoken{1, 1.5}{west}{\rho};
					\btoken{.5, .75}{west}{\psi};
					\btoken{0.5, .25}{west}{\mathcal{H}};
				\end{tikzpicture} \ = \ 
				\begin{tikzpicture}[anchorbase]
					\draw[bmod] (0,0)--(1, 1) -- (0.75, 1.25) -- (1, 1.5)--(1,1.75);
					\draw[bmod] (.5,0.5) -- (0.25,.75)--(.25,1)--(.5,1.25)--(.5,1.75);
					\draw[bmod] (.5, 0) -- (0, .5) -- (.75, 1.25);
					\draw[bmod] (1, 0) -- (1, .75) -- (.5, 1.25);
					\draw[bmod] (1.25, 0) -- (1.25, 1.25)--(1,1.5);
					\btoken{0.75, 1.25}{west}{\alpha};
					\btoken{1, 1.5}{west}{\rho};
					\btoken{.5, 1.25}{east}{\psi};
					\btoken{0.5, .5}{west}{\mathcal{H}};
				\end{tikzpicture} \ = \ 
				\begin{tikzpicture}[anchorbase]
					\draw[bmod] (0,0)--(.75, .75) -- (0.5, 1) -- (1, 1.5)--(1,1.75);
					\draw[bmod] (.5,0.5) -- (0.25,.75)--(.25,1)--(.5,1.25)--(.5,1.75);
					\draw[bmod] (.5, 0) -- (0, .5) -- (.75, 1.25);
					\draw[bmod] (1, 0) -- (1, .75) -- (.5, 1.25);
					\draw[bmod] (1.25, 0) -- (1.25, 1.25)--(1,1.5);
					\btoken{0.5, 1}{north}{\alpha};
					\btoken{1, 1.5}{west}{\rho};
					\btoken{.5, 1.25}{east}{\psi};
					\btoken{0.5, .5}{west}{\mathcal{H}};
				\end{tikzpicture} \ = \ 
				\begin{tikzpicture}[anchorbase]
					\draw[bmod] (0.25,0)--(.25, .25) -- (0, .5) -- (0, 1)--(.5,1.5)--(.5,1.75);
					\draw[bmod] (.25,0.25) -- (0.75,.75)--(.5,1)--(1,1.5)--(1,1.75);
					\draw[bmod] (.75, 0) -- (.75, .25) -- (.25, .75)--(.5,1);
					\draw[bmod] (1, 0) -- (1, 1) -- (.5, 1.5);
					\draw[bmod] (1.25, 0) -- (1.25, 1.25)--(1,1.5);
					\btoken{0.5, 1}{north}{\alpha};
					\btoken{1, 1.5}{west}{\rho};
					\btoken{.5, 1.5}{east}{\psi};
					\btoken{0.25, .25}{east}{\mathcal{H}};
				\end{tikzpicture} \ .
\end{equation}
Since $(M,\rho)$ is an $A$-module, the naturality of braidings  $(c_{A,\mathfrak{L}\otimes V}, \id_A,\psi)$ and $(c_{A\otimes A,V},m,\id_V)$ give  
\begin{equation}\label{fig:four}
				\begin{tikzpicture}[anchorbase]
					\draw[bmod] (0, 0) -- (0, .75) -- (-.25, 1)--(0,1.25)--(0,1.5);
					\draw[bmod] (0,0.75) -- (0.5,1.25)--(.5,1.5);
					\draw[bmod] (.5, 0) -- (.5, .25) -- (.25, .5)--(.5,.75)--(0,1.25);
					\draw[bmod] (.5, 0.25) -- (1, .75) -- (0.5, 1.25);
					\draw[bmod] (1,0)--(1,0.25)--(.5,.75);
					\draw[bmod] (1.25, 0) -- (1.25, .5)--(1,.75);
					\btoken{0, 1.25}{east}{\psi};
					\btoken{.5, 1.25}{west}{\rho};
					\btoken{1, .75}{west}{\rho};
					\btoken{0, .75}{east}{\mathcal{H}};
					\btoken{0.5, .75}{east}{\psi};
					\btoken{0.5, .25}{east}{\mathcal{H}};
				\end{tikzpicture} \ = \ 
				\begin{tikzpicture}[anchorbase]
					\draw[bmod] (0.25, 0) -- (0.25, .5) -- (0, .75)--(0,1)--(0.25,1.25)--(0.25,1.5);
					\draw[bmod] (0.25,0.5) -- (1,1.25)--(1,1.5);
					\draw[bmod] (.75, 0) -- (.75, .25) -- (.5, .5)--(.75,.75)--(0.25,1.25);
					\draw[bmod] (.75, 0.25) -- (1, .5) -- (1, .75)-- (.75, 1);
					\draw[bmod] (1.25,0)--(1.25,0.25)--(.75,.75);
					\draw[bmod] (1.5, 0) -- (1.5, .75)--(1,1.25);
					\btoken{0.25, 1.25}{east}{\psi};
					\btoken{1, 1.25}{west}{\rho};
					\btoken{.75, 1}{west}{m};
					\btoken{0.25, .5}{east}{\mathcal{H}};
					\btoken{0.75, .75}{north}{\psi};
					\btoken{0.75, .25}{east}{\mathcal{H}};
				\end{tikzpicture} \ = \ 
				\begin{tikzpicture}[anchorbase]
					\draw[bmod] (0.25, 0) -- (0.25, .25) -- (0, .5)--(0,.75)--(0.5,1.25)--(0.5,1.5);
					\draw[bmod] (0.25,0.25) -- (1.25,1.25)--(1.25,1.5);
					\draw[bmod] (.75, 0) -- (.75, .25) -- (.5, .5)--(.5,.75)--(0.75,1)--(0.5,1.25);
					\draw[bmod] (.75, 0.25) -- (1.25, .75) -- (1, 1);
					\draw[bmod] (1.5,0)--(1.5,1)--(1.25,1.25);
					\draw[bmod] (1,0)--(1,.75)--(.75,1);
					\btoken{0.5, 1.25}{east}{\psi};
					\btoken{1.25, 1.25}{west}{\rho};
					\btoken{1, 1}{west}{m};
					\btoken{0.25, .25}{east}{\mathcal{H}};
					\btoken{0.75, 1}{east}{\psi};
					\btoken{0.75, .25}{east}{\mathcal{H}};
				\end{tikzpicture} \ = \ 
				\begin{tikzpicture}[anchorbase]
					\draw[bmod] (0.25, 0) -- (0.25, .25) -- (0, .5)--(0,.75)--(0.5,1.25)--(0.5,1.5);
					\draw[bmod] (0.25,0.25) -- (1.25,1.25)--(1.25,1.5);
					\draw[bmod] (.75, 0) -- (.75, .25) -- (.5, .5)--(.5,.75)--(0.75,1)--(0.5,1.25);
					\draw[bmod] (.75, 0.25) -- (1, .5) -- (.75, .75);
					\draw[bmod] (1.5,0)--(1.5,1)--(1.25,1.25);
					\draw[bmod] (1.25,0)--(1.25,.5)--(.5,1.25);
					\btoken{0.5, 1.25}{east}{\psi};
					\btoken{1.25, 1.25}{west}{\rho};
					\btoken{.75, .75}{north}{m};
					\btoken{0.25, .25}{east}{\mathcal{H}};
					\btoken{0.75, 1}{east}{\psi};
					\btoken{0.75, .25}{west}{\mathcal{H}};
				\end{tikzpicture} \ . 
\end{equation}
Moreover, we have
\begin{equation}\label{fig:ten}
				\begin{tikzpicture}[anchorbase]
	\draw[bmod] (0,-0.5)--(0.5, 0) -- (0.5,0.25) -- (0.25, .5)--(0.5,.75)--(0,1.25)--(0,1.5);
	\draw[bmod] (0.5, .25) -- (1,.75)--(.5,1.25);
	\draw[bmod] (.5,-.5)--(0, 0)--(0,.75)--(-.25,1)--(0,1.25)--(0,1.5);
	\draw[bmod] (0, .75) -- (0.5, 1.25)--(.5,1.5);
	\draw[bmod] (1,-0.5)--(1, 0) -- (1, .25) -- (0.5, .75);
	\draw[bmod] (1.25,-0.5)--(1.25, 0) -- (1.25, .5) -- (1, .75);
	\btoken{1, .75}{west}{\rho};
	\btoken{.5, .75}{west}{\psi};
	\btoken{0, 1.25}{east}{\psi};
	\btoken{0, 0.75}{east}{\mathcal{H}}; 
	\btoken{0.5, .25}{west}{\mathcal{H}}; 
	\btoken{.5, 1.25}{west}{\rho};
\end{tikzpicture} \ = \ 	
\begin{tikzpicture}[anchorbase]
	\draw[bmod] (.75,-0.5)--(0.25, 0) -- (0.25, .5) -- (0, .75)--(0,1)--(0.25,1.25)--(0.25,1.5);
	\draw[bmod] (0.25,0.5) -- (1,1.25)--(1,1.5);
	\draw[bmod] (.25,-0.5)--(.75, 0) -- (.75, .25) -- (.5, .5)--(.75,.75)--(0.25,1.25);
	\draw[bmod] (.75, 0.25) -- (1, .5) -- (1, .75)-- (.75, 1);
	\draw[bmod] (1.25,-.5)--(1.25,0)--(1.25,0.25)--(.75,.75);
	\draw[bmod] (1.5,-0.5)--(1.5, 0) -- (1.5, .75)--(1,1.25);
	\btoken{0.25, 1.25}{east}{\psi};
	\btoken{1, 1.25}{west}{\rho};
	\btoken{.75, 1}{west}{m};
	\btoken{0.25, .5}{east}{\mathcal{H}};
	\btoken{0.75, .75}{north}{\psi};
	\btoken{0.75, .25}{west}{\mathcal{H}};
\end{tikzpicture} \ = \ 
\begin{tikzpicture}[anchorbase]
	\draw[bmod] (.25,0)--(0.75, 0.5) -- (0.75, .75) -- (0.5, 1)--(0.5,1.25)--(0.75,1.5)--(0.5,1.75)--(.5,2);
	\draw[bmod] (0.75,0.75) -- (1.25,1.25)--(1,1.5)--(1.25,1.75)--(1.25,2);
	\draw[bmod] (.75, 0) -- (.25, .5) -- (.25, .75)--(0,1)--(0,1.25)--(.5,1.75);
	\draw[bmod] (.25, 0.75) -- (1.25, 1.75);
	\draw[bmod] (1,0)--(1,1.25)--(.5,1.75);
	\draw[bmod] (1.5, 0) -- (1.5, 1.5)--(1.25,1.75);
	\btoken{0.5, 1.75}{east}{\psi};
	\btoken{1.25, 1.75}{west}{\rho};
	\btoken{1, 1.5}{west}{m};
	\btoken{0.25, .75}{east}{\mathcal{H}};
	\btoken{0.75, 1.5}{south}{\psi};
	\btoken{0.75, .75}{east}{\mathcal{H}};
\end{tikzpicture}\ = \
\begin{tikzpicture}[anchorbase]
	\draw[bmod] (.25,0)--(0.75, 0.5) -- (0.75, .75) -- (0.5, 1)--(0.5,1.25)--(0.75,1.5)--(0.5,1.75)--(.5,2);
	\draw[bmod] (0.75,0.75) -- (1.25,1.25)--(1,1.5)--(1.25,1.75)--(1.25,2);
	\draw[bmod] (.75, 0) -- (.25, .5) -- (.25, .75)--(0,1)--(0,1.25)--(.5,1.75);
	\draw[bmod] (.25, 0.75) -- (1.25, 1.75);
	\draw[bmod] (1,0)--(1,1.25)--(.5,1.75);
	\draw[bmod] (1.5, 0) -- (1.5, 1.5)--(1.25,1.75);
	\btoken{0.5, 1.75}{east}{\psi};
	\btoken{1.25, 1.75}{west}{\rho};
	\btoken{1, 1.5}{west}{m};
	\btoken{0.25, .75}{east}{\mathcal{H}};
	\btoken{0.75, 1.5}{south}{\psi};
	\btoken{0.75, .75}{east}{\mathcal{H}};
\end{tikzpicture}
\end{equation}
\[=\]
\begin{equation*}
\begin{tikzpicture}[anchorbase]
	\draw[bmod] (.75,0)--(0.75, 0.25) -- (0.5, .5) -- (0.5, 1.5)--(0.75,1.75)--(0.5,2)--(0.5,2.25);
	\draw[bmod] (0.75,0.25) -- (1,.5)--(1,1.25)--(.75,1.5)--(1.25,2)--(1.25,2.25);
	\draw[bmod] (1, 0) -- (1, .25) -- (.25, .75)--(0.25,1)--(0,1.25)--(0,1.5)--(.5,2)--(.5,2.25);
	\draw[bmod] (.25, 1) -- (1.25, 2);
	\draw[bmod] (1.25,0)--(1.25,1.25)--(.5,2);
	\draw[bmod] (1.5, 0) -- (1.5, 1.75)--(1.25,2);
	\btoken{0.5, 2}{east}{\psi};
	\btoken{1.25, 2}{west}{\rho};
	\btoken{.75, 1.5}{north}{m};
	\btoken{0.25, 1}{east}{\mathcal{H}};
	\btoken{0.75, 1.75}{south}{\psi};
	\btoken{0.75, .25}{east}{\mathcal{H}};
\end{tikzpicture} \ = \
\begin{tikzpicture}[anchorbase]
	\draw[bmod] (.5,0)--(0.5, 0.25) -- (0, .75) -- (0, 1)--(0.75,1.75)--(0.5,2)--(0.5,2.25);
	\draw[bmod] (0.5,0.25) -- (1,.75)--(1,1.25)--(.75,1.5)--(1.25,2)--(1.25,2.25);
	\draw[bmod] (.75, 0) -- (.75, .25) -- (.25, .75)--(0.25,1)--(0,1.25)--(0,1.5)--(.5,2)--(.5,2.25);
	\draw[bmod] (.25, 1) -- (1.25, 2);
	\draw[bmod] (1.25,0)--(1.25,1.25)--(.5,2);
	\draw[bmod] (1.5, 0) -- (1.5, 1.75)--(1.25,2);
	\btoken{0.5, 2}{east}{\psi};
	\btoken{1.25, 2}{west}{\rho};
	\btoken{.75, 1.5}{north}{m};
	\btoken{0.25, 1}{west}{\mathcal{H}};
	\btoken{0.75, 1.75}{south}{\psi};
	\btoken{0.5, .25}{east}{\mathcal{H}};
\end{tikzpicture} \ = \ 
\begin{tikzpicture}[anchorbase]
	\draw[bmod] (.25,0)--(0.25, 0.25) -- (0, .5) -- (0, 1)--(0.75,1.75)--(0.5,2)--(0.5,2.25);
	\draw[bmod] (0.25,0.25) -- (1,1)--(.75,1.25)--(.75,1.5)--(1.25,2)--(1.25,2.25);
	\draw[bmod] (.75, 0) -- (.75, .25) -- (.25, .75)--(0.25,1.75)--(.5,2);
	\draw[bmod] (.75, .25) -- (1, .5)--(.5,1)--(.75,1.25);
	\draw[bmod] (1.25,0)--(1.25,1.25)--(.5,2);
	\draw[bmod] (1.5, 0) -- (1.5, 1.75)--(1.25,2);
	\btoken{0.5, 2}{east}{\psi};
	\btoken{1.25, 2}{west}{\rho};
	\btoken{.75, 1.25}{north}{m};
	\btoken{0.25, .25}{east}{\mathcal{H}};
	\btoken{0.75, 1.75}{south}{\psi};
	\btoken{0.75, .25}{west}{\mathcal{H}};
\end{tikzpicture} \ = \ 
\begin{tikzpicture}[anchorbase]
	\draw[bmod] (.25,0)--(0.25, 0.25) -- (0, .5) -- (0, 1)--(0.75,1.75)--(0.5,2)--(0.5,2.25);
	\draw[bmod] (0.25,0.25) -- (.5,.5)--(.5,1)--(.75,1.25)--(.75,1.5)--(1.25,2)--(1.25,2.25);
	\draw[bmod] (.75, 0) -- (.75, .25) -- (1, .5)--(1,1)--(.75,1.25);
	\draw[bmod] (.75, .25) -- (.25, .75)--(.25,1.75)--(.5,2);
	\draw[bmod] (1.25,0)--(1.25,1.25)--(.5,2);
	\draw[bmod] (1.5, 0) -- (1.5, 1.75)--(1.25,2);
	\btoken{0.5, 2}{east}{\psi};
	\btoken{1.25, 2}{west}{\rho};
	\btoken{.75, 1.25}{north}{m};
	\btoken{0.25, .25}{east}{\mathcal{H}};
	\btoken{0.75, 1.75}{south}{\psi};
	\btoken{0.75, .25}{west}{\mathcal{H}};
\end{tikzpicture} \ .
\end{equation*}
			
For equation \ref{fig:ten}, the first equality comes from the $A$-module axiom of $(M,\rho)$. The second, third, and fourth equalities follow from the naturality of braidings $(c_{A, \mathfrak{L}\otimes V}, \id_A,\psi), (c_{A\otimes A,V},m,\id_V)$ and $(c_{L,L},\mathcal{H},\id_L)$. Lemma \ref{identityofbraid} implies the fifth equality, the natural isomorphism of the braiding $(c_{L,L},\id_L,\mathcal{H})$ gives the sixth equality. The last equality is due to the commutativity of $A$.
			
Since $(V,\psi)$ is an $\mathfrak{L}$-module, we have 
\begin{equation}\label{fig:four-ten}
\begin{tikzpicture}[anchorbase]
	\draw[bmod] (0, 0) -- (0, .75) -- (-.25, 1)--(0,1.25)--(0,1.5);
	\draw[bmod] (0,0.75) -- (0.5,1.25)--(.5,1.5);
	\draw[bmod] (.5, 0) -- (.5, .25) -- (.25, .5)--(.5,.75)--(0,1.25);
	\draw[bmod] (.5, 0.25) -- (1, .75) -- (0.5, 1.25);
	\draw[bmod] (1,0)--(1,0.25)--(.5,.75);
	\draw[bmod] (1.25, 0) -- (1.25, .5)--(1,.75);
	\btoken{0, 1.25}{east}{\psi};
	\btoken{.5, 1.25}{west}{\rho};
	\btoken{1, .75}{west}{\rho};
	\btoken{0, .75}{east}{\mathcal{H}};
	\btoken{0.5, .75}{west}{\psi};
	\btoken{0.5, .25}{east}{\mathcal{H}};
\end{tikzpicture}  \ - \ 
\begin{tikzpicture}[anchorbase]
	\draw[bmod] (0.5, 0) -- (0.5,0.25) -- (0.25, .5)--(0.5,.75)--(0,1.25)--(0,1.5);
	\draw[bmod] (0.5, .25) -- (1,.75)--(.5,1.25);
	\draw[bmod] (.75, 0) -- (0, .25)--(0,.75)--(-.25,1)--(0,1.25)--(0,1.5);
	\draw[bmod] (0, .75) -- (0.5, 1.25)--(.5,1.5);
	\draw[bmod] (1, 0) -- (1, .25) -- (0.5, .75);
	\draw[bmod] (1.25, 0) -- (1.25, .5) -- (1, .75);
	\btoken{1, .75}{west}{\rho};
	\btoken{.5, .75}{west}{\psi};
	\btoken{0, 1.25}{east}{\psi};
	\btoken{0, 0.75}{east}{\mathcal{H}}; 
	\btoken{0.5, .25}{west}{\mathcal{H}}; 
	\btoken{.5, 1.25}{west}{\rho};
\end{tikzpicture} \ = \ 
\begin{tikzpicture}[anchorbase]
	\draw[bmod] (0.25, 0) -- (0.25,0.5) -- (0, .75)--(0.75,1.5)--(0.75,1.75);
	\draw[bmod] (0.25, .5) -- (1.25,1.5)--(1.25,1.75);
	\draw[bmod] (.75, 0) -- (0.75, .5)--(0.25,1);
	\draw[bmod] (0.75, .5) -- (1, .75)--(.75,1);
	\draw[bmod] (1.25, 0) -- (1.25, 1) -- (0.75, 1.5);
	\draw[bmod] (1.5, 0) -- (1.5, 1.25) -- (1.25, 1.5);
	\btoken{1.25, 1.5}{west}{\rho};
	\btoken{.75, 1.5}{east}{\psi};
	\btoken{0.25, 1}{east}{\mathfrak{u}};
	\btoken{0.25, 0.5}{east}{\mathcal{H}}; 
	\btoken{0.75, .5}{west}{\mathcal{H}}; 
	\btoken{.75, 1}{west}{m};
\end{tikzpicture} \ .
\end{equation}
Note that $\mathcal{H}$ is a crossed homomorphism, the pair $(V\otimes M, \psi \vartriangleright\gamma )$ already becomes an $L$-module by combining the above pieces together. 
			
Finally, we check that $(V\otimes M ; \psi \vartriangleright \rho , \psi \vartriangleright \gamma )$ satisfies the compatible condition \eqref{compatible2}, i.e., 
\begin{equation}\label{fig:enter-label31}
\begin{tikzpicture}[anchorbase]
	\draw[bmod] (0, 0) -- (0,1.25) -- (0.5, 1.75)--(0.5,2);
	\draw[bmod] (0.75, 0) -- (.75,1)--(1,1.25)--(.5,1.75);
	\draw[bmod] (1.25, 0) -- (1.25, 1)--(0.5,1.75);
	\draw[bmod] (1, 0) -- (1, .5)--(0.25,1.25)--(.25,2);
	\btoken{1, 1.25}{west}{\rho};
	\btoken{.5, 1.75}{west}{\gamma};
\end{tikzpicture} \ + \ 
\begin{tikzpicture}[anchorbase]
	\draw[bmod] (0.25, 0) -- (0.25,1) -- (0, 1.25)--(0.5,1.75)--(.5,2);
	\draw[bmod] (0.25, 1) -- (1,1.75)--(1,2);
	\draw[bmod] (1, 0) -- (1, .75)--(1.5,1.25);
	\draw[bmod] (1.5, 0) -- (1.5, .75)--(0.5,1.75);
	\draw[bmod] (1.75, 0) -- (1.75, 1)--(1,1.75);
	\btoken{1, 1.75}{west}{\rho};
	\btoken{1.5, 1.25}{west}{\rho};
	\btoken{.5, 1.75}{east}{\psi};
		\btoken{.25, 1}{east}{\mathcal{H}};
\end{tikzpicture} \ = \ 
\begin{tikzpicture}[anchorbase]
	\draw[bmod] (0, 0) -- (0,.25) -- (1, 1.25)--(0.5,1.75)--(.5,2);
	\draw[bmod] (0.5, 0) -- (.5,.25)--(0,.75)--(0,1.25)--(.5,1.75);
	\draw[bmod] (.75, 0) -- (.75, .5)--(.25,1)--(.25,2);
	\draw[bmod] (1.25, 0) -- (1.25, 1)--(1,1.25);
	\btoken{.5, 1.75}{west}{\rho};
	\btoken{1, 1.25}{west}{\gamma};
\end{tikzpicture} \ + \ 
\begin{tikzpicture}[anchorbase]
	\draw[bmod] (0.5, 0) -- (0.5,.5) -- (.25, .75)--(0.75,1.25)--(.75,2);
	\draw[bmod] (0.5, 0.5) -- (.75,.75)--(0.75,1)--(1.25,1.5)--(1,1.75);
	\draw[bmod] (.75, 0) -- (0, .75)--(1,1.75)--(1,2);
	\draw[bmod] (1, 0) -- (1, 1)--(.75,1.25);
	\draw[bmod] (1.5, 0) -- (1.5, 1.25)--(1.25,1.5);
	\btoken{1, 1.75}{west}{\rho};
	\btoken{1.25, 1.5}{west}{\rho};
	\btoken{.75, 1.25}{north}{\psi};
	\btoken{.5, .5}{west}{\mathcal{H}};
\end{tikzpicture} \ + \ 
\begin{tikzpicture}[anchorbase]
	\draw[bmod] (0, 0) -- (0,1) -- (.75, 1.75)--(0.75,2);
	\draw[bmod] (0.5, 0) -- (.5,1)--(.25,1.25);
	\draw[bmod] (.75, 0) -- (0.75, 1.25)--(.5,1.5)--(.5,2);
	\draw[bmod] (1, 0) -- (1, 1.5)--(.75,1.75);
	\btoken{.75, 1.75}{west}{\rho};
	\btoken{.25, 1.25}{east}{\alpha};
\end{tikzpicture} \ . 
\end{equation}
By the compatible condition of the weak module $(M;\rho,\gamma)$, we obtain  
\begin{equation*}
\begin{tikzpicture}[anchorbase]
	\draw[bmod] (0, 0) -- (0,1.25) -- (0.5, 1.75)--(0.5,2);
	\draw[bmod] (0.75, 0) -- (.75,1)--(1,1.25)--(.5,1.75);
	\draw[bmod] (1.25, 0) -- (1.25, 1)--(0.5,1.75);
	\draw[bmod] (1, 0) -- (1, .5)--(0.25,1.25)--(.25,2);
	\btoken{1, 1.25}{west}{\rho};
	\btoken{.5, 1.75}{west}{\gamma};
\end{tikzpicture} \ + \ 
\begin{tikzpicture}[anchorbase]
	\draw[bmod] (0, 0) -- (0,.25) -- (1, 1.25)--(0.5,1.75)--(.5,2);
	\draw[bmod] (0.5, 0) -- (.5,.25)--(0,.75)--(0,1.25)--(.5,1.75);
	\draw[bmod] (.75, 0) -- (.75, .5)--(.25,1)--(.25,2);
	\draw[bmod] (1.25, 0) -- (1.25, 1)--(1,1.25);
	\btoken{.5, 1.75}{west}{\rho};
	\btoken{1, 1.25}{west}{\gamma};
\end{tikzpicture} \ = \ 
\begin{tikzpicture}[anchorbase]
	\draw[bmod] (0, 0) -- (0,1) -- (.75, 1.75)--(0.75,2);
	\draw[bmod] (0.5, 0) -- (.5,1)--(.25,1.25);
	\draw[bmod] (.75, 0) -- (0.75, 1.25)--(.5,1.5)--(.5,2);
	\draw[bmod] (1, 0) -- (1, 1.5)--(.75,1.75);
	\btoken{.75, 1.75}{west}{\rho};
	\btoken{.25, 1.25}{east}{\alpha};
\end{tikzpicture} \ .
\end{equation*}
Moreover, we have 
\begin{equation}\label{compatible222}
\begin{tikzpicture}[anchorbase]
	\draw[bmod] (0.5, 0) -- (0.5,.5) -- (.25, .75)--(0.75,1.25)--(.75,2);
	\draw[bmod] (0.5, 0.5) -- (.75,.75)--(0.75,1)--(1.25,1.5)--(1,1.75);
	\draw[bmod] (.75, 0) -- (0, .75)--(1,1.75)--(1,2);
	\draw[bmod] (1, 0) -- (1, 1)--(.75,1.25);
	\draw[bmod] (1.5, 0) -- (1.5, 1.25)--(1.25,1.5);
	\btoken{1, 1.75}{west}{\rho};
	\btoken{1.25, 1.5}{west}{\rho};
	\btoken{.75, 1.25}{north}{\psi};
	\btoken{.5, .5}{west}{\mathcal{H}};
\end{tikzpicture} \ = \ 
\begin{tikzpicture}[anchorbase]
	\draw[bmod] (0.5, 0) -- (0.5,.5) -- (.25, .75)--(0.25,1.5)--(.5,1.75)--(.5,2);
	\draw[bmod] (0.5, 0.5) -- (.75,.75)--(0.75,1)--(1.25,1.5)--(1,1.75)--(1,2);
	\draw[bmod] (.75, 0) -- (0, .75)--(1,1.75);
	\draw[bmod] (1.25, 0) -- (1.25, 1)--(.5,1.75);
	\draw[bmod] (1.5, 0) -- (1.5, 1.25)--(1.25,1.5);
	\btoken{1, 1.75}{west}{\rho};
	\btoken{1.25, 1.5}{west}{\rho};
	\btoken{.5, 1.75}{east}{\psi};
	\btoken{.5, .5}{west}{\mathcal{H}};
\end{tikzpicture} \ = \ 
\begin{tikzpicture}[anchorbase]
	\draw[bmod] (0.25, 0) -- (0.25,.5) -- (0, .75)--(0,1.25)--(.5,1.75)--(.5,2);
	\draw[bmod] (0.25, 0.5) -- (1.25,1.5)--(1,1.75)--(1,2);
	\draw[bmod] (.5, 0) -- (0.5, 1.25)--(1,1.75);
	\draw[bmod] (1.25, 0) -- (1.25, 1)--(.5,1.75);
	\draw[bmod] (1.5, 0) -- (1.5, 1.25)--(1.25,1.5);
	\btoken{1, 1.75}{west}{\rho};
	\btoken{1.25, 1.5}{west}{\rho};
	\btoken{.5, 1.75}{east}{\psi};
	\btoken{.25, .5}{east}{\mathcal{H}};
\end{tikzpicture}
\end{equation}
\[ =\]
\begin{equation*}
\begin{tikzpicture}[anchorbase]
	\draw[bmod] (0.25, 0) -- (0.25,.5) -- (0, .75)--(0,1.5)--(.25,1.75)--(.25,2);
	\draw[bmod] (0.25, 0.5) -- (1.25,1.5)--(1,1.75)--(1,2);
	\draw[bmod] (.75, 0) -- (0.75, 1.5)--(1,1.75)--(1,2);
	\draw[bmod] (1, 0) -- (1, .5)--(.5,1)--(.5,1.5)--(.25,1.75);
	\draw[bmod] (1.5, 0) -- (1.5, 1.25)--(1.25,1.5);
	\btoken{1, 1.75}{west}{\rho};
	\btoken{1.25, 1.5}{west}{\rho};
	\btoken{.25, 1.75}{east}{\psi};
	\btoken{.25, .5}{east}{\mathcal{H}};
\end{tikzpicture} \ = \ 
\begin{tikzpicture}[anchorbase]
	\draw[bmod] (0.5, 0) -- (0.5,.5) -- (0, 1)--(0,1.5)--(.25,1.75)--(.25,2);
	\draw[bmod] (0.5, 0.5) -- (1.25,1.25)--(1,1.5)--(1.25,1.75)--(1.25,2);
	\draw[bmod] (1.25, 0) -- (1.25, .75)--(.75,1.25)--(1,1.5)--(1.25,1.75);
	\draw[bmod] (1.5, 0) -- (.5, 1)--(.5,1.5)--(.25,1.75);
	\draw[bmod] (1.75, 0) -- (1.75, 1.25)--(1.25,1.75);
	\btoken{1.25, 1.75}{west}{\rho};
	\btoken{1, 1.5}{west}{m};
	\btoken{.25, 1.75}{east}{\psi};
	\btoken{.5, .5}{east}{\mathcal{H}};
\end{tikzpicture} \ = \ 
\begin{tikzpicture}[anchorbase]
	\draw[bmod] (0.5, 0) -- (0.5,.5) -- (0, 1)--(0,1.5)--(.25,1.75)--(.25,2);
	\draw[bmod] (0.5, 0.5) -- (1.25,1.25)--(1,1.5)--(1.25,1.75)--(1.25,2);
	\draw[bmod] (1.25, 0) -- (1.25, .75)--(.75,1.25)--(1,1.5)--(1.25,1.75);
	\draw[bmod] (1.5, 0) -- (1.5, .25)--(.5,1.25)--(.5,1.5)--(.25,1.75);
	\draw[bmod] (1.75, 0) -- (1.75, 1.25)--(1.25,1.75);
	\btoken{1.25, 1.75}{west}{\rho};
	\btoken{1, 1.5}{west}{m};
	\btoken{.25, 1.75}{east}{\psi};
	\btoken{.5, .5}{east}{\mathcal{H}};
\end{tikzpicture} \ = \ 
\begin{tikzpicture}[anchorbase]
	\draw[bmod] (0.25, 0) -- (0.25,1) -- (0, 1.25)--(0.5,1.75)--(.5,2);
	\draw[bmod] (0.25, 1) -- (1,1.75)--(1,2);
	\draw[bmod] (1, 0) -- (1, .75)--(1.5,1.25);
	\draw[bmod] (1.5, 0) -- (1.5, .75)--(0.5,1.75);
	\draw[bmod] (1.75, 0) -- (1.75, 1)--(1,1.75);
	\btoken{1, 1.75}{west}{\rho};
	\btoken{1.5, 1.25}{west}{\rho};
	\btoken{.5, 1.75}{east}{\psi};
	\btoken{.25, 1}{east}{\mathcal{H}};
\end{tikzpicture} \ ,
\end{equation*}
where the first equality comes from the natural isomorphism of the braiding $(c_{A,\mathfrak{L}\otimes V}, \id_A,\psi)$. 
Thanks to Lemma \ref{identityofbraid}, the next equality holds. The natural isomorphism of the braiding $(c_{A\otimes A,V}, c_{A,A}, \id_V)$ implies the third equality. 
Since $(M,\rho)$ is an $A$-module and $A$ is commutative, we have done it!
\end{proof} 

We next recall the categorical analogues of modules over a semigroup: left module categories, right module categories, and bimodule categories. 
These structures provide a categorical formulation of left modules, right modules, and bimodules, respectively.
		
\begin{defi}
A \emph{left module category} over $\mathcal{C}$ is a category $\mathcal{M}$ equipped with a bifunctor $\vartriangleright : \mathcal{C} \times \mathcal{M} \rightarrow \mathcal{M}$, a natural isomorphism \[a^{\mathcal{M}}_{U,V,Z} :(U\otimes V)\vartriangleright Z \to U\vartriangleright (V \vartriangleright Z),\]  called a left module associativity constraint, and a natural isomorphism \[l^{\mathcal{M}}_Z : I \vartriangleright Z \to Z,\] called a left module unit constraint,  such that the following diagrams
\begin{equation}\label{module1}
				\begin{tikzcd}[sep=large]
					(U \otimes V \otimes W) \vartriangleright Z \ar[r, "a^{\mathcal{M}}_{U,V\otimes W,Z}"] \ar[d, "a^{\mathcal{M}}_{U\otimes V,W,Z}"'] & U \vartriangleright \big((V \otimes W) \vartriangleright Z\big) \ar[d, "\id_U\vartriangleright a^{\mathcal{M}}_{V,W,Z}"] \\
					(U \otimes V) \vartriangleright (W \vartriangleright Z) \ar[r, "a^{\mathcal{M}}_{U,V,W\vartriangleright Z}"']             & U \vartriangleright \big(V \vartriangleright (W \vartriangleright Z)\big)
				\end{tikzcd}
\end{equation}
and
\begin{equation}\label{module2}
				\begin{tikzpicture}[description/.style={fill=white,inner sep=2pt}]
					\matrix (m) [matrix of math nodes, row sep=3em,
					column sep=2.5em, text height=1.5ex, text depth=0.25ex]
					{ 
						(V \otimes I) \vartriangleright Z & & V \vartriangleright (I \vartriangleright Z) \\
						& V \vartriangleright Z & \\
					};
					\path[->,font=\scriptsize]
					(m-1-1) edge node[auto] {$a^{\mathcal{M}}_{V, I, Z}$} (m-1-3)
					(m-1-1) edge node[auto,swap] {$\id_{V\vartriangleright Z} $} (m-2-2)
					(m-1-3) edge node[auto] {$\text{Id}_V \vartriangleright l^{\mathcal{M}}_Z$} (m-2-2);
				\end{tikzpicture}
\end{equation}
are commutative, for all $U,V,W \in \calC, Z\in \mathcal{M}$. Moreover, it is said to be \emph{strict} if $a^{\mathcal{M}}_{U,V,Z}$ and $l^{\mathcal{M}}_{Z}$ are all identities. 
\end{defi}
Similarly, one obtains the notion of a right module category over $\mathcal{C}$ by placing the action of $\mathcal{C}$ on the right. 
We now combine the left and right actions and recall the corresponding notion of a bimodule category. 
\begin{defi}
Let $\mathcal{D}$ be another strict monoidal category. A \emph{bimodule category} over $(\calC,\mathcal{D})$ is a category $\mathcal{M}$ equipped with bifunctors $\vartriangleright: \calC \times \mathcal{M} \to \mathcal{M}$ and $\vartriangleleft:\mathcal{M} \times \mathcal{D} \to \mathcal{M}$, and natural isomorphisms 
\begin{align*}
				a^{\mathcal{M}}_{U,V,Z}& :(U\otimes V)\vartriangleright Z \to U\vartriangleright (V \vartriangleright Z)\\
				l^{\mathcal{M}}_Z&: I \vartriangleright Z \to Z\\
				b^{\mathcal{M}}_{Z,X,Y}&: Z\vartriangleleft(X\otimes Y) \to (Z\vartriangleleft X)\vartriangleleft Y\\
				r^{\mathcal{M}}_Z&:Z\vartriangleleft I \to Z\\
				d^{\mathcal{M}}_{U,Z,X}&:(U\vartriangleright Z)\vartriangleleft X \to U \vartriangleright(Z\vartriangleleft X)
\end{align*}
for all $U,V\in \calC,Z\in \mathcal{M}, X,Y\in \mathcal{D}$, such that $\mathcal{M}$ is a left module category over $\calC$ and a right module category over $\mathcal{D}$, and the following diagrams
\[
			\begin{tikzcd}[column sep=1.8em, row sep=3.2ex]
				& \bigl((U\otimes V)\vartriangleright  Z\bigr)\vartriangleleft X
				\ar[dl, "a^{\mathcal{M}}_{U,V,Z}\vartriangleleft \mathrm{id}_X"']
				\ar[dr, "d^{\mathcal{M}}_{U\otimes V,Z,X}"] \\
				\bigl(U\vartriangleright (V\vartriangleright  Z)\bigr)\vartriangleleft X
				\ar[d, "d^{\mathcal{M}}_{U,V\vartriangleright Z,X}"']
				&& (U\otimes V)\vartriangleright(Z\vartriangleleft X)
				\ar[d, "a^{\mathcal{M}}_{U,V,Z\vartriangleleft X}"] \\
				U\vartriangleright \bigl((V\vartriangleright Z)\vartriangleleft X\bigr)
				\ar[rr, "\mathrm{id}_U\vartriangleright  d^{\mathcal{M}}_{V,Z,X}"']
				&& U\vartriangleright \bigl(V\vartriangleright(Z\vartriangleleft X)\bigr)
			\end{tikzcd}
\]
and
\[
			\begin{tikzcd}[column sep=1.8em, row sep=3.2ex]
				& U\vartriangleright\bigl( Z\vartriangleleft( X\otimes Y)\bigr)
				\ar[dl, "\id_U\vartriangleright b^{\mathcal{M}}_{Z,X,Y}  "'] \\
				U\vartriangleright\bigl((Z\vartriangleleft X) \vartriangleleft Y  \bigr)
				&& (U\vartriangleright Z)\vartriangleleft (X\otimes Y)
				\ar[d, "b^{\mathcal{M}}_{U\vartriangleright Z,X,Y   }"] 
				\ar[ul, "d^{\mathcal{M}}_{U,Z,X\otimes Y}"']\\
				\bigl(U\vartriangleright (Z\vartriangleleft X)  \bigr)\vartriangleleft Y
				\ar[u,"d^{\mathcal{M}}_{U,Z\vartriangleleft X,Y}"]
				&& \bigl( (U\vartriangleright Z)\vartriangleleft  X   \bigr) \vartriangleleft Y
				\ar[ll, "d^{\mathcal{M}}_{U,Z,X} \vartriangleleft \id_Y"']
			\end{tikzcd}
\]
commute for all $U,V\in \calC,Z\in \mathcal{M}, X,Y\in \mathcal{D}$. Moreover, it is said to be \emph{strict} if $a^{\mathcal{M}}_{U,V,Z}, l^{\mathcal{M}}_{Z},b^{\mathcal{M}}_{Z,X,Y},r^{\mathcal{M}}_Z$ and $d^{\mathcal{M}}_{U,Z,X}$ are all identities. 
\end{defi}

It is immediate that $\mathcal{C}$ is both a left and a right module category over itself, with the action given by the tensor product of $\mathcal{C}$.
Consequently, $\mathcal{C}$ is naturally a bimodule category over
$(\mathcal{C},\mathcal{C})$. 
Moreover, by Lemma~\ref{tensorofamod}, for any monoid $A$ in $\mathcal{C}$, the category ${\bf Mod}_{\mathcal{C}}(A)$ of
left $A$-modules carries a natural left module category structure over $\mathcal{C}$.
\begin{rmk}
Let $\mathcal{D}$ be a strict monoidal category. 
A functor $F:\mathcal{C}\rightarrow \mathcal{D}$ is called a \emph{strict monoidal functor} if
\[
	F(I_{\mathcal C})=I_{\mathcal D} \ \ \text{and} \ \ 
	F(V\otimes W)=F(V)\otimes F(W)
\]
for all objects $V,W\in \mathrm{ob}(\mathcal{C})$, and if the analogous equalities hold on morphisms.
A strict left $\mathcal{C}$-module category $\mathcal{M}$ is equivalently the same as a strict monoidal functor $F:\mathcal{C}\rightarrow \mathrm{End}(\mathcal{M}).$
More explicitly, the action
	\[
	\mathcal{C}\times \mathcal{M}\rightarrow \mathcal{M},
	\quad
	(V,M)\mapsto V\otimes M,
	\]
corresponds to the strict monoidal functor defined by $F(V)(M)=V\otimes  M.$
Conversely, any strict monoidal functor
$F:\mathcal{C}\rightarrow \mathrm{End}(\mathcal{M})$
defines a strict left $\mathcal{C}$-module category structure on $\mathcal{M}$ by setting $V\otimes  M:=F(V)(M).$
Thus, module categories categorify the classical fact that a module over a ring is equivalently a representation of that ring on an abelian group.
\end{rmk}

With the preceding preparations, we are now ready to state the main result of this section.
		
\begin{theor}\label{maintheorem}
Let $(A,L,\alpha)$ be a Lie-Rinehart monoid in $\calC$. Suppose  $(\mathfrak{L}, \mathfrak{u})$ is a Lie monoid in $\calC$, and $\mathcal{H}: L \rightarrow \mathfrak{L}\otimes A$  a crossed homomorphism. Then the category ${\bf WMod}^{\calC}_A{\big(L\big)}$ is a strict left module category over the monoidal category $\mathrm{ \bf Mod}_{\calC}(\mathfrak{L})$. Specifically, for all $(V,\psi), (V', \psi')\in $ $\mathrm{ \bf Mod}_{\calC}(\mathfrak{L})$, and $(M;\rho, \gamma), (M'; \rho',\gamma') \in$ ${\bf WMod}^{\calC}_A{\big(L\big)}$,  we have a bifunctor
\begin{align*}
				\mathcal{F}_{\mathcal{H}}: \mathrm{ \bf Mod}_{\calC}(\mathfrak{L}) \times {\bf WMod}^{\calC}_A{\big(L\big)} &\rightarrow {\bf WMod}^{\calC}_A{\big(L\big)} \\
				((V,\psi), (M; \rho,\gamma) ) & \mapsto (V\otimes M; \psi \vartriangleright \rho, \psi \vartriangleright \gamma ).
\end{align*}
On morphisms, it is defined by $\mathcal{F}_{\mathcal{H}}(\varphi, \phi)=\varphi\otimes \phi$, where $\varphi:V\to V'$ is a homomorphism of $\mathfrak{L}$-modules and $\phi:M\to M'$ is a homomorphism of weak modules over $(A,L,\alpha)$. 
\end{theor}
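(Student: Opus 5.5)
The plan has four steps: well-definedness on objects, well-definedness and functoriality on morphisms, and the two strict module-category axioms.

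\textbf{Objects.} By Lemma~\ref{main1}, for $(V,\psi)\in{\bf Mod}_{\calC}(\mathfrak L)$ and $(M;\rho,\gamma)\in{\bf WMod}^{\calC}_A(L)$, the triple $(V\otimes M;\psi\vartriangleright\rho,\psi\vartriangleright\gamma)$ is a weak module. So $\mathcal F_{\mathcal H}$ is well defined on objects.

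\textbf{Morphisms.} Let $\varphi:V\to V'$ be an $\mathfrak L$-module map and $\phi:M\to M'$ a weak module map. I will check that $\varphi\otimes\phi$ intertwines both actions.
\begin{itemize}
\item For the $A$-action: $\psi\vartriangleright\rho$ is $(\id_V\otimes\rho)\circ(c_{A,V}\otimes\id_M)$. Naturality of $c_{A,-}$ in $V$ and $A$-linearity of $\phi$ give $(\varphi\otimes\phi)\circ(\psi\vartriangleright\rho)=(\psi'\vartriangleright\rho')\circ(\id_A\otimes\varphi\otimes\phi)$.
\item For the first summand of $\psi\vartriangleright\gamma$: this is $\id_V\otimes\gamma$ precomposed with $c_{L,V}$, and the same naturality argument applies, using $L$-linearity of $\phi$.
\item For the second summand: $\mathcal H$ sits on the $L$-strand, then $\psi$ acts on $V$ and $\rho$ acts on $M$ after braidings. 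Here I use $\varphi\circ\psi=\psi'\circ(\id_{\mathfrak L}\otimes\varphi)$, $A$-linearity of $\phi$, and naturality of $c_{A,V}$ and $c_{L,V}$.
\end{itemize}
Functoriality is then immediate from the interchange law \eqref{defoftensor}: $\mathcal F_{\mathcal H}(\varphi'\circ\varphi,\phi'\circ\phi)=(\varphi'\otimes\phi')\circ(\varphi\otimes\phi)$, and identities go to identities.

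\textbf{Strictness: associativity.} The unit of ${\bf Mod}_{\calC}(\mathfrak L)$ is $I$ with the trivial (zero) action. The tensor product $V\otimes V'$ carries the diagonal action $\psi\otimes\psi'$, namely $\psi\otimes\id_{V'}$ plus the braided term $(\id_V\otimes\psi')\circ(c_{\mathfrak L,V}\otimes\id_{V'})$. I must show the objects $(V\otimes V')\vartriangleright M$ and $V\vartriangleright(V'\vartriangleright M)$ carry the same structure maps, so that $a^{\mathcal M}$ can be the identity.
\begin{itemize}
\item The $A$-actions agree: both are $\rho$ braided past $V\otimes V'$, by \eqref{braid1}.
\item The $L$-action on $V\vartriangleright(V'\vartriangleright M)$ expands into three terms: $\gamma$ braided past $V\otimes V'$; the $\mathcal H$-term acting through $\psi$ on $V$ and $\rho$ on $M$; and the $\mathcal H$-term acting through $\psi'$ on $V'$ and $\rho$ on $M$, with $\mathfrak L$ braided past $V$.
\item The $L$-action on $(V\otimes V')\vartriangleright M$ has the $\gamma$ term plus the $\mathcal H$-term using the diagonal action $\psi\otimes\psi'$. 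Splitting that diagonal action gives exactly the last two terms above.
\item Matching these terms uses only \eqref{braid1}, \eqref{braid2} and naturality of the braiding, in the same style as the computations in Lemma~\ref{main1}.
\end{itemize}

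\textbf{Strictness: unit.} For $I\vartriangleright M$, the $\mathcal H$-term vanishes because the action on $I$ is zero. The remaining braidings with $I$ are identities in the strict setting. Hence $I\vartriangleright M=M$ with the same structure, so $l^{\mathcal M}$ is the identity.

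\textbf{Coherence and the main obstacle.} With $a^{\mathcal M}$ and $l^{\mathcal M}$ identities, diagrams \eqref{module1} and \eqref{module2} commute trivially. Equality on morphisms is $\varphi\otimes\varphi'\otimes\phi$ on both sides. I expect the main obstacle to be the bookkeeping in the associativity step: keeping track of where the $A$-strand coming out of $\mathcal H$ is braided in the diagonal-action term. That is the one place where the string diagrams must be drawn carefully.
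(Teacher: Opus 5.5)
Your proposal is correct and follows the same route as the paper: Lemma~\ref{main1} handles objects, and a naturality-of-braiding computation handles morphisms (using that $\varphi$ is $\mathfrak L$-linear and $\phi$ is $A$- and $L$-linear). You then check that the structure maps on $(V\otimes V')\vartriangleright M$ and $V\vartriangleright(V'\vartriangleright M)$, and on $I\vartriangleright M$ and $M$, coincide on the nose. Your term-matching in the associativity step is more explicit than the paper's sketch, and you are right that it needs only \eqref{braid1}, \eqref{braid2} and naturality of $c_{L,V}$ with respect to $\mathcal H$; the crossed-homomorphism identity enters only through Lemma~\ref{main1}, even though the paper also cites it at this step.
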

\begin{proof}
By Lemma \ref{main1}, the triple $(V\otimes M; \psi \vartriangleright \rho, \psi \vartriangleright \gamma )$ is indeed a weak module over $(A,L,\alpha)$. 
Thus the assignment $\mathcal{F}_{\mathcal H}$ is well-defined on objects. 
We next verify that $\mathcal{F}_{\mathcal H}$ is well-defined on morphisms.
Let $\varphi:(V,\psi)\rightarrow (V',\psi')$
be a homomorphism of $\mathfrak L$-modules, and let
$\phi:(M;\rho,\gamma)\rightarrow (M';\rho',\gamma')$
be a homomorphism of weak modules over $(A,L,\alpha)$. We need to show that
\[
\varphi\otimes \phi:(V\otimes M;\psi \vartriangleright \rho,\psi\vartriangleright \gamma  )\rightarrow (V'\otimes M';\psi'\vartriangleright \rho',\psi'\vartriangleright \gamma' )
\]
is a homomorphism of weak modules. Since $\phi$ is a homomorphism of $A$-modules, it follows immediately from the definition of $\psi\vartriangleright\rho$ that $\varphi\otimes\phi$ is a homomorphism of $A$-modules. It remains to
verify that it is compatible with the $L$-actions, namely that
\[
(\varphi\otimes \phi)\circ(\psi\vartriangleright \gamma)
=
(\psi'\vartriangleright \gamma')\circ
(\mathrm{id}_L\otimes \varphi\otimes \phi).
\]
This identity is established in 
\begin{equation}\label{tensorofhom}
\begin{tikzpicture}[anchorbase]
	\draw[bmod] (0, 0) -- (0,.25) -- (0.5, .75)--(0.5,1.25);
	\draw[bmod] (0.5, 0) -- (.5,.25)--(0,.75)--(0,1.25);
	\draw[bmod] (.75, 0) -- (.75, .5)--(.5,.75);
	\btoken{0, 1}{east}{\varphi};
	\btoken{.5, 1}{west}{\phi};
	\btoken{.5, .75}{north}{\gamma};
\end{tikzpicture} \ + \
\begin{tikzpicture}[anchorbase]
	\draw[bmod] (0.25, 0) -- (0.25,.25) -- (0, .5)--(0.25,.75)--(.25,1.25);
	\draw[bmod] (0.25, 0.25) -- (.75,.75)--(0.75,1.25);
	\draw[bmod] (.75, 0) -- (.75, .25)--(.25,.75);
	\draw[bmod] (1, 0) -- (1, .5)--(.75,.75);
	\btoken{0.25, 1}{east}{\varphi};
	\btoken{.75, 1}{west}{\phi};
	\btoken{.25, .75}{north}{\psi};
	\btoken{.75, .75}{north}{\rho};
	\btoken{.25, .25}{east}{\mathcal{H}};
\end{tikzpicture} \ = \ 
\begin{tikzpicture}[anchorbase]
	\draw[bmod] (0, 0) -- (0,.5) -- (0.5, 1)--(0.5,1.25);
	\draw[bmod] (0.5, 0) -- (.5,.25)--(0.25,.5)--(0.25,1.25);
	\draw[bmod] (.75, 0) -- (.75, .75)--(.5,1);
	\btoken{0.25, 1}{east}{\varphi};
	\btoken{.75, .5}{west}{\phi};
	\btoken{.5, 1}{west}{\gamma'};
\end{tikzpicture} \ + \ 
\begin{tikzpicture}[anchorbase]
	\draw[bmod] (0.25, 0) -- (0.25,.25) -- (0, .5)--(0,.75)--(.25,1)--(.25,1.25);
	\draw[bmod] (0.25, 0.25) -- (.75,.75)--(0.75,1.25);
	\draw[bmod] (.5, 0) -- (.5, .75)--(.25,1);
	\draw[bmod] (1, 0) -- (1, .5)--(.75,.75);
	\btoken{0.5,.75}{east}{\varphi};
	\btoken{.75, 1}{west}{\phi};
	\btoken{.25, 1}{east}{\psi'};
	\btoken{.75, .75}{north}{\rho};
	\btoken{.25, .25}{east}{\mathcal{H}};
\end{tikzpicture}
\end{equation}
\[ =\]
\begin{equation*}
\begin{tikzpicture}[anchorbase]
	\draw[bmod] (0, 0) -- (0,.5) -- (0.5, 1)--(0.5,1.25);
	\draw[bmod] (0.5, 0) -- (.5,.5)--(0.25,.75)--(0.25,1.25);
	\draw[bmod] (.75, 0) -- (.75, .75)--(.5,1);
	\btoken{0.5, .25}{east}{\varphi};
	\btoken{.75, .5}{west}{\phi};
	\btoken{.5, 1}{west}{\gamma'};
\end{tikzpicture}\ + \ 
\begin{tikzpicture}[anchorbase]
	\draw[bmod] (0.25, 0) -- (0.25,.25) -- (0, .5)--(0,.75)--(.25,1)--(.25,1.25);
	\draw[bmod] (0.25, 0.25) -- (.75,.75)--(0.75,1.25);
	\draw[bmod] (.75, 0) -- (.75, .5)--(.25,1);
	\draw[bmod] (1, 0) -- (1, .5)--(.75,.75);
	\btoken{0.75,.25}{east}{\varphi};
	\btoken{.75, 1}{west}{\phi};
	\btoken{.25, 1}{east}{\psi'};
	\btoken{.75, .75}{west}{\rho};
	\btoken{.25, .25}{east}{\mathcal{H}};
\end{tikzpicture} \ = \ 
\begin{tikzpicture}[anchorbase]
	\draw[bmod] (0, 0) -- (0,.5) -- (0.5, 1)--(0.5,1.25);
	\draw[bmod] (0.5, 0) -- (.5,.5)--(0.25,.75)--(0.25,1.25);
	\draw[bmod] (.75, 0) -- (.75, .75)--(.5,1);
	\btoken{0.5, .25}{east}{\varphi};
	\btoken{.75, .5}{west}{\phi};
	\btoken{.5, 1}{west}{\gamma'};
\end{tikzpicture}\ + \ 
\begin{tikzpicture}[anchorbase]
	\draw[bmod] (0.25, 0) -- (0.25,.5) -- (0, .75)--(0.25,1)--(.25,1.25);
	\draw[bmod] (0.25, 0.5) -- (.75,1)--(0.75,1.25);
	\draw[bmod] (.75, 0) -- (.75, .5)--(.25,1);
	\draw[bmod] (1, 0) -- (1, .75)--(.75,1);
	\btoken{0.75,.25}{east}{\varphi};
	\btoken{1, .25}{west}{\phi};
	\btoken{.25, 1}{east}{\psi'};
	\btoken{.75, 1}{west}{\rho'};
	\btoken{.25, .5}{east}{\mathcal{H}};
\end{tikzpicture} \ , 
\end{equation*}
where the
first equality follows from the fact that $\varphi$ is a homomorphism of
$\mathfrak L$-modules and $\phi$ is a homomorphism of $L$-modules. The second
equality follows from the naturality of the braiding with respect to the
morphisms involved, in particular from the naturality relation for
$(c_{A,V},\mathrm{id}_A,\varphi)$. The last equality follows from the fact
that $\phi$ is a homomorphism of $A$-modules. Hence $\varphi\otimes\phi$ is a
homomorphism of weak modules. Therefore $\mathcal{F}_{\mathcal H}$ is a
bifunctor.

It remains to verify the strict module category axioms. Since the ambient
monoidal category is assumed to be strict, it suffices to show that the
associativity constraint for the action is the identity. Equivalently, for any
$(V,\psi),(W,\theta)\in {\bf Mod}_{\mathcal C}(\mathfrak L)$
and any
$(M;\rho,\gamma)\in {\bf WMod}^{\mathcal C}_A(L),$
we have
\begin{align}
	\label{iden1}
	(\psi\boxplus \theta)\vartriangleright \rho
	&=
	\psi\vartriangleright(\theta\vartriangleright \rho),
	\\
	\label{iden2}
	(\psi\boxplus \theta)\vartriangleright\gamma
	&=
	\psi\vartriangleright(\theta\vartriangleright \gamma).
\end{align}
Here $\psi\boxplus \theta$ denotes the $\mathfrak L$-module structure on
$V\otimes W$ induced by the Lie monoid structure of $\mathfrak L$.

The identity \eqref{iden1} follows directly from the definition of the
$A$-action in \eqref{newhom}. The proof of \eqref{iden2} is obtained by using the
definition of the induced $L$-action, the compatibility of the crossed
homomorphism $\mathcal H$, and the naturality of the braiding, in particular
the naturality relation involving $c_{A,V}$ and $\mathcal H$.
Finally, by the definition of the unit action in \eqref{newhom}, the left unit
constraint is the identity:
\[
l^{\mathcal M}_Z=\mathrm{id}_Z \ \text{for all} \ 
Z\in {\bf WMod}^{\mathcal C}_A(L).
\]
Consequently, ${\bf WMod}^{\mathcal C}_A(L)$ is a strict left module category
over ${\bf Mod}_{\mathcal C}(\mathfrak L)$.
\end{proof}
\begin{rmk}
Let $\mathcal{H} $ and $\mathcal{H}'$ be  crossed homomorphisms from $L$ to $ \mathfrak{L}\otimes A$.
If
\begin{equation*}
\begin{tikzpicture}[anchorbase]
	\draw[bmod] (0.25, 0) -- (0.25,.25) -- (0, .5)--(0.25,.75)--(.25,1);
	\draw[bmod] (0.25, 0.25) -- (.75,.75)--(0.75,1);
	\draw[bmod] (.75, 0) -- (.75, .25)--(.25,.75);
    \draw[bmod] (1, 0) -- (1, .5)--(.75,.75);
	\btoken{0.75, .75}{west}{\rho};
	\btoken{.25, .75}{east}{\psi};
	\btoken{.25, .25}{east}{\mathcal{H}};
\end{tikzpicture} \ = \ 
\begin{tikzpicture}[anchorbase]
	\draw[bmod] (0.25, 0) -- (0.25,.25) -- (0, .5)--(0.25,.75)--(.25,1);
	\draw[bmod] (0.25, 0.25) -- (.75,.75)--(0.75,1);
	\draw[bmod] (.75, 0) -- (.75, .25)--(.25,.75);
    \draw[bmod] (1, 0) -- (1, .5)--(.75,.75);
	\btoken{0.75, .75}{west}{\rho};
	\btoken{.25, .75}{east}{\psi};
	\btoken{.25, .25}{east}{\mathcal{H}'};
\end{tikzpicture}
\end{equation*}
for all $(V,\psi)\in $ $\mathrm{ \bf Mod}_{\calC}(\mathfrak{L})$ and $(M;\rho, \gamma) \in$ ${\bf WMod}^{\calC}_A{\big(L\big)}$, then we have $\mathcal{F}_{\mathcal{H}}= \mathcal{F}_{\mathcal{H}'}$. 
\end{rmk}

As an immediate application of Theorem~\ref{maintheorem} to the iterated Lie-Rinehart monoids constructed above, we obtain the following result.		
\begin{coro}
Let $(A,L,\alpha)$ be a Lie-Rinehart monoid in $\calC$. For every
$r\in \mathbb Z_{\geq 1}$, let $(A,L_A^{[r]},\alpha_A^{[r]})$
be the $r$-th iterated Lie-Rinehart monoid associated with $(A,L,\alpha)$.
Let $(\mathfrak L,\mathfrak u)$ be a Lie monoid in $\mathcal C$, and let
$\mathcal H:L_r^A\longrightarrow \mathfrak L\otimes A$
be a crossed homomorphism. Then the category
${\bf WMod}^{\mathcal C}_A(L_A^{[r]})$
is a strict left module category over the monoidal category
${\bf Mod}_{\mathcal C}(\mathfrak L).$
\end{coro}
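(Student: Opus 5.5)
The corollary follows by applying Theorem~\ref{maintheorem} with $(A,L_A^{[r]},\alpha_A^{[r]})$ in place of $(A,L,\alpha)$. The theorem's hypotheses are a Lie-Rinehart monoid in $\calC$ with commutative $A$, a Lie monoid $(\mathfrak L,\mathfrak u)$, and a crossed homomorphism from the Lie part into $\mathfrak L\otimes A$. So I will check that each of these is available for the iterated data and then quote the theorem verbatim. (The map written $\mathcal H:L_r^A\to\mathfrak L\otimes A$ in the statement should be read as $\mathcal H:L_A^{[r]}\to \mathfrak L\otimes A$.)

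First, I invoke the corollary following Proposition~\ref{mainofanchor}. It says that for every $r\geq 0$ the tuple
\[
(A,m,\eta,L_A^{[r]},\mu_A^{[r]},\alpha_A^{[r]},\beta_A^{[r]})
\]
is a Lie-Rinehart monoid in $\calC$. I will make the induction on $r$ explicit: the case $r=0$ is the hypothesis, and the inductive step is Proposition~\ref{mainofanchor} applied to the $(r-1)$-st tuple. The underlying monoid stays $A$ at every step, so commutativity of $A$, which Theorem~\ref{maintheorem} and Lemma~\ref{main1} use, is preserved automatically.

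Second, I read the crossed homomorphism condition for $\mathcal H$ with respect to the bracket $\mu_A^{[r]}$ and the anchor $\alpha_A^{[r]}$. This is precisely the hypothesis of Theorem~\ref{maintheorem} for the Lie-Rinehart monoid $(A,L_A^{[r]},\alpha_A^{[r]})$. The theorem then yields the bifunctor
\[
\mathcal F_{\mathcal H}:{\bf Mod}_{\calC}(\mathfrak L)\times {\bf WMod}^{\calC}_A(L_A^{[r]})\to {\bf WMod}^{\calC}_A(L_A^{[r]}),
\]
together with identity associativity and unit constraints. This gives the strict left module category structure.

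There is no genuine obstacle, because the proof of Theorem~\ref{maintheorem} uses only the abstract axioms of a Lie-Rinehart monoid and of weak modules, never a specific form of $\mu$, $\alpha$ or $\beta$. The only point needing care is confirming that the iterated tuple satisfies \emph{all} axioms \eqref{derivation}--\eqref{compatible1}, including the $A$-module structure $\beta_A^{[r]}$. This is exactly what the iterated corollary asserts, so I will cite it rather than re-verify it. Optionally, I would remark that weak modules over $L_A^{[r]}$ exist in abundance via the functor $\mathcal F_r$ of Corollary~\ref{weakfunctor}, so the module category is nontrivial, but this is not needed for the statement.
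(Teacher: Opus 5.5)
Your proposal is correct and is exactly the paper's argument: the paper states this corollary as an immediate application of Theorem~\ref{maintheorem} to the iterated Lie-Rinehart monoid $(A,L_A^{[r]},\alpha_A^{[r]})$, which is supplied by the corollary to Proposition~\ref{mainofanchor}. As a minor refinement, the proof of Theorem~\ref{maintheorem} never uses $\beta$ or the axioms \eqref{homofa} and \eqref{compatible1}; it uses only that $(L_A^{[r]},\mu_A^{[r]})$ is a Lie monoid, that $A$ is commutative, and the morphism $\alpha_A^{[r]}$ as it appears in the weak-module and crossed-homomorphism conditions, so the ``point needing care'' you identify asks for more than the argument actually needs.
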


Another useful special case is obtained by choosing the trivial Lie monoid.
More precisely, if we take $\mathfrak L$ to be the Lie monoid $(I,{\bf 0})$ in Theorem~\ref{maintheorem}, then we obtain the following corollary.
		
\begin{coro}\label{newfunctor}
Let $(A,L,\alpha)$ be a Lie-Rinehart monoid in $\calC$. Suppose that $\mathcal H:L\rightarrow A\cong I\otimes A$
is a crossed homomorphism with respect to the trivial Lie monoid
$(I,\mathbf 0),$ i.e., 
			\begin{equation*}
				\begin{tikzpicture}[anchorbase]
					\draw[bmod] (0, 0) -- (0, .25) -- (.25, .5)--(.25,1);
					\draw[bmod] (.5,0) -- (0.5,.25)--(.25,.5);
					\btoken{.25, .5}{west}{\mu};
					\btoken{0.25, .75}{east}{\mathcal{H}};
				\end{tikzpicture} \ = \ 
				\begin{tikzpicture}[anchorbase]
					\draw[bmod] (0, 0) -- (0, .5) -- (.25, .75)--(.25,1);
					\draw[bmod] (.5,0) -- (0.5,.5)--(.25,.75);
					\btoken{.25, .75}{east}{\alpha};
					\btoken{0.5, .5}{west}{\mathcal{H}};
				\end{tikzpicture} \ - \ 
				\begin{tikzpicture}[anchorbase]
					\draw[bmod] (0, 0) -- (0.5, .5) -- (.25, .75)--(.25,1);
					\draw[bmod] (.5,0) -- (0,.5)--(.25,.75);
					\btoken{.25, .75}{east}{\alpha};
					\btoken{0.5, .5}{west}{\mathcal{H}};
				\end{tikzpicture}\ . 
\end{equation*}
Then the category ${\bf WMod}^{\mathcal C}_A(L)$
admits a strict left module category structure over
${\bf Mod}_{\mathcal C}(I) \cong \calC.$
\end{coro}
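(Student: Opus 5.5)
The plan is to obtain Corollary~\ref{newfunctor} as the special case $\mathfrak L=I$ of Theorem~\ref{maintheorem}. Concretely, I will check three things: that $(I,\mathbf 0)$ is a Lie monoid, that the displayed identity is exactly the crossed homomorphism axiom for this $\mathfrak L$, and that ${\bf Mod}_{\calC}(I)\cong\calC$ as monoidal categories.

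First, $(I,\mathbf 0)$ is a Lie monoid. Antisymmetry and the Jacobi identity hold trivially for the zero bracket. Under the strictness convention $I\otimes A=A$, a morphism $\mathcal H:L\to A$ is the same thing as a morphism $L\to I\otimes A$. Substituting $\mathfrak u=\mathbf 0$ into the crossed homomorphism axiom kills the last diagram, the one involving $\mathfrak u$ and $m$. The other two diagrams collapse to $\alpha\circ(\id_L\otimes\mathcal H)$ and $\alpha\circ(\id_L\otimes\mathcal H)\circ c_{L,L}$, because the strand carrying $I$ disappears and braidings with $I$ are identities. The result is precisely the displayed condition, so $\mathcal H$ is a crossed homomorphism.

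Theorem~\ref{maintheorem} then makes ${\bf WMod}^{\calC}_A(L)$ a strict left module category over ${\bf Mod}_{\calC}(I,\mathbf 0)$. It remains to identify this with $\calC$.
\begin{itemize}
\item An $I$-module structure $\psi:I\otimes V\to V$ for the zero bracket only has to satisfy $\psi\circ(\id_I\otimes\psi)-\psi\circ(\id_I\otimes\psi)\circ c=0$. Since $c_{I,I}=\id$, this holds for every $\psi$, so ${\bf Mod}_{\calC}(I)$ consists of pairs $(V,\psi)$ with $\psi$ arbitrary.
\item Sending $V\mapsto (V,0)$ embeds $\calC$ strictly monoidally. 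Restricting the action along this embedding already yields a strict left $\calC$-module structure on ${\bf WMod}^{\calC}_A(L)$. Concretely, $V\vartriangleright M$ has $A$-action $\rho$ after braiding $A$ past $V$, and $L$-action $\gamma$ after braiding past $V$.
\item The zero action is closed under the tensor product: the diagonal action $\psi\boxplus\theta$ of two zero actions is zero, and the unit carries the zero action. Hence the embedding is strict monoidal, and the restricted action is again strict.
\end{itemize}

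The main obstacle is the identification ${\bf Mod}_{\calC}(I)\cong\calC$. If the paper's convention for Lie modules imposes no extra condition, the modules over $(I,\mathbf 0)$ are pairs $(V,\psi)$ with arbitrary $\psi$, not bare objects. In that case I will interpret the statement as restricting to the full monoidal subcategory of trivial modules $(V,0)$, which is isomorphic to $\calC$. The zero-bracket computation in the second paragraph also needs care with signs under $c_{I,I}=\id$, but this is routine.
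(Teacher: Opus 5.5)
Your core step matches the paper's: set $\mathfrak L=(I,\mathbf 0)$ in Theorem~\ref{maintheorem}. Your reduction of the crossed homomorphism axiom to the displayed identity is right. The second summand also needs naturality of the braiding, $c_{A,L}\circ(\mathcal H\otimes\id_L)=(\id_L\otimes\mathcal H)\circ c_{L,L}$, not only $c_{I,-}=\id$.

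Where you differ from the paper is the identification with $\calC$, which the paper simply asserts. You are correct that an $(I,\mathbf 0)$-module is an object with an \emph{arbitrary} endomorphism; for $\calC=\mathcal{V}ec(\bbC)$ these are $\bbC[x]$-modules. So ${\bf Mod}_{\calC}(I)\not\cong\calC$ in general. Restricting along the strict monoidal embedding $V\mapsto(V,\mathbf 0)$ is a valid repair, and it proves the stated conclusion.

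Note, however, what the repair buys. With $\psi=\mathbf 0$, the second summand of $\psi\vartriangleright\gamma$ in \eqref{newhom} vanishes. Your $\calC$-action is then just $V\otimes M$ with $\rho$ and $\gamma$ braided past $V$. It never sees $\mathcal H$, and it exists for every Lie-Rinehart monoid.

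The $\mathcal H$-dependent endofunctor the paper later draws from this corollary, $\gamma\mapsto\gamma+\rho\circ(\mathcal H\otimes\id_M)$, comes instead from $\psi=\id$. But $V\mapsto(V,\id_V)$ is not monoidal: $\id_V\boxplus\id_W=2\,\id_{V\otimes W}$, and the unit of ${\bf Mod}_{\calC}(I,\mathbf 0)$ is $(I,\mathbf 0)$.

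So the $\mathcal H$-sensitive content lives in the ${\bf Mod}_{\calC}(I,\mathbf 0)$-action given by the theorem itself. Only its $\mathcal H$-independent part is a strict $\calC$-action.
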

		
Thanks to Theorem~\ref{maintheorem}, the module category structure on ${\bf WMod}^{\mathcal C}_A(L)$ gives rise to several natural functors.
		
\begin{coro}
Let $(A,L,\alpha)$ be a Lie-Rinehart monoid,   $(\mathfrak{L}, \mathfrak{u})$ be a Lie monoid in $\calC$, and let $\mathcal{H}: L \rightarrow \mathfrak{L}\otimes A$ be a crossed homomorphism. Then we have

(1) for every weak module $(M; \rho,\gamma) \in$ ${\bf WMod}^{\calC}_A{\big(L\big)}$, there is a functor 
\begin{align*}
\mathcal{F}_{\mathcal{H}}^{(M;\rho,\gamma)}: \mathrm{ \bf Mod}_{\calC}(\mathfrak{L})  &\rightarrow {\bf WMod}^{\calC}_A{\big(L\big)} \\
(V,\psi) & \mapsto (V\otimes M; \psi \vartriangleright \rho, \psi \vartriangleright \gamma );
\end{align*}

(2) for every $(V,\psi)\in$ {\bf Mod}$_{\calC}(\mathfrak{L})$, there is an endofunctor 
\begin{align*}
\mathcal{F}_{\mathcal{H}}^{(V,\psi)}: {\bf WMod}^{\calC}_A{\big(L\big)}  &\rightarrow {\bf WMod}^{\calC}_A{\big(L\big)} \\
(M;\rho,\gamma) & \mapsto (V\otimes M; \psi \vartriangleright \rho, \psi \vartriangleright \gamma ).
\end{align*}
\end{coro}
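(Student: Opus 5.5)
The plan is to read both functors off the bifunctor $\mathcal{F}_{\mathcal H}$ of Theorem~\ref{maintheorem} by freezing one argument. A bifunctor $\mathcal{F}_{\mathcal H}:\mathbf{Mod}_{\calC}(\mathfrak L)\times{\bf WMod}^{\calC}_A(L)\to{\bf WMod}^{\calC}_A(L)$ restricts to a functor in each variable once the other is fixed: keep the object, and use its identity morphism in the frozen slot.

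For (1), I fix $(M;\rho,\gamma)$ and set $\mathcal{F}_{\mathcal H}^{(M;\rho,\gamma)}(V,\psi)=\mathcal{F}_{\mathcal H}((V,\psi),(M;\rho,\gamma))$. On a morphism $\varphi:(V,\psi)\to(V',\psi')$ of $\mathfrak L$-modules I take $\mathcal{F}_{\mathcal H}(\varphi,\id_M)=\varphi\otimes\id_M$.
\begin{itemize}
\item Lemma~\ref{main1} shows the object $(V\otimes M;\psi\vartriangleright\rho,\psi\vartriangleright\gamma)$ is a weak module.
\item The computation \eqref{tensorofhom} in the proof of Theorem~\ref{maintheorem}, with $\phi=\id_M$, shows $\varphi\otimes\id_M$ is a homomorphism of weak modules.
\item Functoriality comes from \eqref{defoftensor}: $\id_V\otimes\id_M=\id_{V\otimes M}$, and $(\varphi'\circ\varphi)\otimes\id_M=(\varphi'\otimes\id_M)\circ(\varphi\otimes\id_M)$.
\end{itemize}

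For (2), I fix $(V,\psi)$ and send $\phi:M\to M'$ to $\id_V\otimes\phi$. Well-definedness again follows from \eqref{tensorofhom}, now with $\varphi=\id_V$. Preservation of identities and composition again follows from \eqref{defoftensor}. This is the endofunctor $\mathcal{F}(V,\psi)$ of the remark on strict monoidal functors into $\mathrm{End}({\bf WMod}^{\calC}_A(L))$.

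There is no genuine obstacle here. The only point to check is that the morphisms $\varphi\otimes\id_M$ and $\id_V\otimes\phi$ are covered by the morphism verification in Theorem~\ref{maintheorem}, which holds because identities are module homomorphisms.
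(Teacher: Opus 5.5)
Your proposal is correct and follows the paper's route: the paper gives no separate proof and obtains both functors directly from the bifunctor $\mathcal{F}_{\mathcal H}$ of Theorem~\ref{maintheorem}, with identity morphisms placed in the frozen slot. You also note that $\varphi\otimes\id_M$ and $\id_V\otimes\phi$ are special cases of the morphism verification \eqref{tensorofhom}, since identities are module homomorphisms. That observation is exactly what justifies restricting the bifunctor in each variable.
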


\subsection{Monoidal structure on the category ${\bf WMod}^{\calC}_A{\big(L\big)}$ }
Throughout this subsection, we assume that $(A,m,\eta,\Delta,\varepsilon)$ is a bimonoid and $(A,L,\alpha)$ is a Lie-Rinehart monoid in $\calC$. 
By equations \eqref{tensorofasso} and \eqref{tensoroflmodule}, the categories {\bf Mod}$_{\calC}(A)$ and {\bf Mod}$_{\calC}(L)$ are equipped with monoidal structures with respect to the tensor products $\boxtimes$ and $\boxplus$, respectively. Explicitly, for any $(M;\rho,\gamma), (M';\rho',\gamma') \in$ ${\bf WMod}^{\calC}_A{\big(L\big)}$, we have $(M\otimes M', \rho \boxtimes \rho') \in$ {\bf Mod}$_{\calC}(A)$ and $(M\otimes M', \gamma \boxplus \gamma' ) \in$ {\bf Mod}$_{\calC}(L)$. 
		
A natural question is whether these two induced structures are compatible in the sense of weak modules. In other words, one may ask under what additional conditions the object
\[
(M\otimes M';\rho\boxtimes \rho',\gamma\boxplus \gamma')
\]
is again a weak module over $(A,L,\alpha)$. If this holds functorially, then
the category ${\bf WMod}^{\mathcal C}_A(L)$ admits a natural monoidal structure. To answer this question, we impose additional compatibility conditions on the Lie-Rinehart monoid $(A,L,\alpha)$. This leads to the notion of a
Lie-Rinehart bimonoid, which will be introduced below.
		
\begin{defi}
The Lie-Rinehart monoid $(A,L,\alpha)$ is called a \emph{Lie-Rinehart bimonoid} if it satisfies the following additional axioms:
\begin{equation}\label{compatibleofh}
				\begin{tikzpicture}[anchorbase]
					\draw[bmod] (0, 0) -- (0.25, .25) -- (.25, .75)--(0,1);
					\draw[bmod] (.25,.75) -- (0.5,1);
					\draw[bmod] (.5,0) -- (0.25,.25);
					\btoken{.25, .25}{west}{\alpha};
					\btoken{.25, .75}{west}{\Delta};
				\end{tikzpicture} \ = \ 
				\begin{tikzpicture}[anchorbase]
					\draw[bmod] (0, 0) -- (0, .5) -- (.25, .75)--(0.25,1);
					\draw[bmod] (.5,0) -- (0.5,.5)--(.25,.75);
					\draw[bmod] (.5,0.5) -- (0.75,.75)--(.75,1);
					\btoken{.25, .75}{east}{\alpha};
					\btoken{.5, .5}{west}{\Delta};
				\end{tikzpicture} \ + \ 
				\begin{tikzpicture}[anchorbase]
					\draw[bmod] (0, 0) -- (0, .25) -- (.5, .75)--(0.5,1);
					\draw[bmod] (.5,0) -- (0.5,.25)--(0,.75)--(0,1);
					\draw[bmod] (.5,0.25) -- (0.75,.5)--(.5,.75);
					\btoken{.5, .75}{east}{\alpha};
					\btoken{.5, .25}{west}{\Delta};
				\end{tikzpicture} \ \ \text{and} \ \ 
				\begin{tikzpicture}[anchorbase]
					\draw[bmod] (0, 0) -- (0, .25) -- (.25, .5)--(0.25,1);
					\draw[bmod] (.5,0) -- (0.5,.25)--(0.25,.5);;
					\btoken{.25, .5}{west}{\alpha};
					\btoken{.25, 1}{west}{\varepsilon};
				\end{tikzpicture} =0 \ . 
\end{equation}
\end{defi}
		
\begin{lem}\label{comoften}
Assume that $(A,L,\alpha)$ is a Lie-Rinehart bimonoid. Then, for any two weak modules
$(M;\rho,\gamma),\ (M';\rho',\gamma')
\in {\bf WMod}^{\mathcal C}_A(L),$ the triple
$(M\otimes M';\rho\boxtimes \rho',\gamma\boxplus\gamma')$
is again a weak module over $(A,L,\alpha)$.
\end{lem}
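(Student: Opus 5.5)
By the results recalled in the appendix (equations \eqref{tensorofasso} and \eqref{tensoroflmodule}), $(M\otimes M',\rho\boxtimes\rho')$ is already an $A$-module and $(M\otimes M',\gamma\boxplus\gamma')$ is already an $L$-module. So the only thing to verify is the Leibniz-type compatibility \eqref{compatible2} for the triple $(M\otimes M';\rho\boxtimes\rho',\gamma\boxplus\gamma')$. Explicitly,
\[
\rho\boxtimes\rho'=(\rho\otimes\rho')\circ(\mathrm{id}_A\otimes c_{A,M}\otimes \mathrm{id}_{M'})\circ(\Delta\otimes\mathrm{id}_{M\otimes M'})
\]
and
\[
\gamma\boxplus\gamma'=\gamma\otimes\mathrm{id}_{M'}+(\mathrm{id}_M\otimes\gamma')\circ(c_{L,M}\otimes\mathrm{id}_{M'}).
\]
The plan is to expand both sides of \eqref{compatible2} as sums of string diagrams on $L\otimes A\otimes M\otimes M'$ and match the terms.

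\textbf{Step 1: expand the left-hand side.} Composing $\gamma\boxplus\gamma'$ after $\mathrm{id}_L\otimes(\rho\boxtimes\rho')$ gives two diagrams:
\begin{itemize}
\item[(i)] $\gamma$ acting on $\rho(a_{(1)}\otimes m)$, tensored with $\rho'(a_{(2)}\otimes m')$;
\item[(ii)] $\rho(a_{(1)}\otimes m)$, tensored with $\gamma'$ acting on $\rho'(a_{(2)}\otimes m')$, where $L$ is braided past the $M$-strand.
\end{itemize}
We apply \eqref{compatible2} for $(M;\rho,\gamma)$ to term (i), and \eqref{compatible2} for $(M';\rho',\gamma')$ to term (ii). This produces four diagrams:
\begin{itemize}
\item[(a)] $\rho(\alpha(x\otimes a_{(1)})\otimes m)\otimes\rho'(a_{(2)}\otimes m')$;
\item[(b)] $\rho(a_{(1)}\otimes\gamma(x\otimes m))\otimes\rho'(a_{(2)}\otimes m')$, with $x$ braided past $a_{(1)}$;
\item[(c)] $\rho(a_{(1)}\otimes m)\otimes\rho'(\alpha(x\otimes a_{(2)})\otimes m')$;
\item[(d)] $\rho(a_{(1)}\otimes m)\otimes\rho'(a_{(2)}\otimes\gamma'(x\otimes m'))$, with $x$ braided past $a_{(1)}$, $m$ and $a_{(2)}$.
\end{itemize}
Naturality of the braiding lets us rewrite each of these in a normal form in which $\Delta$ is applied first and the $L$-strand is braided into position.

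\textbf{Step 2: expand the right-hand side.} The right-hand side is
\[
(\rho\boxtimes\rho')\circ(\alpha\otimes\mathrm{id})+(\rho\boxtimes\rho')\circ(\mathrm{id}_A\otimes(\gamma\boxplus\gamma'))\circ(c_{L,A}\otimes\mathrm{id}).
\]
The second summand splits into two terms, which after naturality of the braiding are exactly (b) and (d); this uses the symmetric braiding axioms \eqref{braid1} and \eqref{braid2} and Lemma \ref{identityofbraid} to reorder the crossings. The first summand contains $\Delta\circ\alpha$. Here we invoke the first axiom of \eqref{compatibleofh}, which writes $\Delta\circ\alpha$ as the sum $(\alpha\otimes\mathrm{id})\circ(\mathrm{id}_L\otimes\Delta)$ plus the term with $\alpha$ on the second tensor factor, the $L$-strand being braided past the first factor. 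Substituting this yields precisely (a) and (c). The counit condition $\varepsilon\circ\alpha=0$ is not needed for this lemma; it will matter later for the unit object of the monoidal structure.

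\textbf{Main obstacle.} The essential input is the single use of the coderivation axiom \eqref{compatibleofh}. The real difficulty is bookkeeping of braidings, especially in term (d) versus its counterpart, where $x$ must cross $a_{(1)}$, $m$ and $a_{(2)}$ in different orders on the two sides. I would handle this by moving all crossings to the bottom of each diagram using naturality of $c$ with respect to $\rho$ and $\Delta$, and then comparing the resulting permutation braids via the hexagon identities. In a symmetric category, two such braids with the same underlying permutation are equal.
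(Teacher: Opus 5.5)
Your proposal is correct and follows the paper's proof essentially step by step. The paper also expands the left-hand side into the two terms of $\gamma\boxplus\gamma'$ and applies \eqref{compatible2} for $M$ and $M'$ to obtain your four diagrams (a)--(d). It then reorganizes the crossings in (b), (c), (d) by naturality of the braiding together with Lemma \ref{identityofbraid} and Lemma \ref{natof1}, and finally merges (a) and (c) into $(\rho\boxtimes\rho')\circ(\alpha\otimes\mathrm{id})$ using the first identity in \eqref{compatibleofh}. You are also right that $\varepsilon\circ\alpha=0$ is not used in this argument.
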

\begin{proof}
The proof is given by 
\begin{equation*}
\begin{tikzpicture}[anchorbase]
	\draw[bmod] (0, 0) -- (0,1.25) -- (0.25, 1.5)--(0.25,1.75);
	\draw[bmod] (0.5, 0) -- (.5,.75)--(0.25,1)--(.5,1.25)--(.25,1.5);
	\draw[bmod] (1, 0) -- (1,.75)--(0.5,1.25);
	\draw[bmod] (.5, 0.75) -- (1.25, 1.5)--(1.25,1.75);
	\draw[bmod] (1.5, 0) -- (1.5, 1.25)--(1.25,1.5);
	\btoken{0.25, 1.5}{east}{\gamma};
	\btoken{1.25, 1.5}{west}{\rho'};
	\btoken{.5, 1.25}{west}{\rho};
	\btoken{.5, .75}{east}{\Delta};
\end{tikzpicture} \ + \ 
\begin{tikzpicture}[anchorbase]
	\draw[bmod] (0, 0) -- (0,.75) -- (0.75, 1.5)--(0.75,1.75);
	\draw[bmod] (0.5, 0) -- (.5,.25)--(0.25,.5)--(.25,.75)--(.5,1)--(.5,1.75);
	\draw[bmod] (.5, 0.25) -- (1.25,1)--(0.75,1.5);
	\draw[bmod] (1.5, 0) -- (1.5, .75)--(1.25,1);
	\draw[bmod] (1, 0) -- (1, .5)--(.5,1);
	\btoken{0.75, 1.5}{west}{\gamma'};
	\btoken{1.25, 1}{west}{\rho'};
	\btoken{.5, 1}{west}{\rho};
	\btoken{.5, .25}{east}{\Delta};
\end{tikzpicture}\ = \ 
\begin{tikzpicture}[anchorbase]
	\draw[bmod] (0, 0) -- (0,1) -- (0.5, 1.5)--(0.5,1.75);
	\draw[bmod] (0.5, 0) --(.5,1)--(.25,1.25);
	\draw[bmod] (1, 0) -- (1,1)--(0.5,1.5);
	\draw[bmod] (.5, 1) -- (1, 1.5)--(1,1.75);
	\draw[bmod] (1.25, 0) -- (1.25, 1.25)--(1,1.5);
	\btoken{0.25, 1.25}{north}{\alpha};
	\btoken{1, 1.5}{west}{\rho'};
	\btoken{.5, 1.5}{east}{\rho};
	\btoken{.5, 1}{west}{\Delta};
\end{tikzpicture} \ + \ 
\begin{tikzpicture}[anchorbase]
	\draw[bmod] (0, 0) -- (0,.75) -- (0.5, 1.25)--(0.25,1.5)--(.25,1.75);
	\draw[bmod] (0.25, 0) --(.25,.75)--(0,1)--(0,1.25)--(.25,1.5)--(.25,1.75);
	\draw[bmod] (.25, .75) -- (1, 1.5)--(1,1.75);
	\draw[bmod] (.75, 0) -- (.75,1)--(0.25,1.5);
	\draw[bmod] (1.25, 0) -- (1.25, 1.25)--(1,1.5);
	\btoken{0.5, 1.25}{south}{\gamma};
	\btoken{1, 1.5}{west}{\rho'};
	\btoken{.25, 1.5}{east}{\rho};
	\btoken{.25, .75}{west}{\Delta};
\end{tikzpicture} \ + \ 
\begin{tikzpicture}[anchorbase]
	\draw[bmod] (0, 0) -- (0,.75) -- (0.75, 1.5)--(0.75,1.75);
	\draw[bmod] (0.5, 0) --(.5,.25)--(.25,.5)--(.5,.75)--(.25,1)--(.25,1.75);
	\draw[bmod] (.5, .25) -- (.75, .5)--(.75,1)--(.5,1.25);
	\draw[bmod] (1, 0) -- (1,.25)--(0.5,.75);
	\draw[bmod] (1.25, 0) -- (1.25, 1)--(.75,1.5);
	\btoken{0.5, 1.25}{north}{\alpha};
	\btoken{.75, 1.5}{west}{\rho'};
	\btoken{.5, .75}{east}{\rho};
	\btoken{.5, .25}{west}{\Delta};
\end{tikzpicture} \ + \ 
\begin{tikzpicture}[anchorbase]
	\draw[bmod] (0, 0) -- (0,.5) -- (1, 1.5)--(0.75,1.75)--(.75,2);
	\draw[bmod] (0.5, 0) --(.5,.25)--(.25,.5)--(.5,.75)--(.5,1)--(.25,1.25)--(.25,2);
	\draw[bmod] (.5, .25) -- (1, .75)--(1,1)--(.5,1.5)--(.75,1.75);
	\draw[bmod] (1, 0) -- (1,.25)--(0.5,.75);
	\draw[bmod] (1.25, 0) -- (1.25, 1.25)--(1,1.5);
	\btoken{1, 1.5}{west}{\gamma'};
	\btoken{.75, 1.75}{east}{\rho'};
	\btoken{.5, .75}{west}{\rho};
	\btoken{.5, .25}{west}{\Delta};
\end{tikzpicture}
\end{equation*}
\[ =\]
\begin{equation*}
\begin{tikzpicture}[anchorbase]
	\draw[bmod] (0, 0) -- (0,1) -- (0.5, 1.5)--(0.5,1.75);
	\draw[bmod] (0.5, 0) --(.5,1)--(.25,1.25);
	\draw[bmod] (1, 0) -- (1,1)--(0.5,1.5);
	\draw[bmod] (.5, 1) -- (1, 1.5)--(1,1.75);
	\draw[bmod] (1.25, 0) -- (1.25, 1.25)--(1,1.5);
	\btoken{0.25, 1.25}{north}{\alpha};
	\btoken{1, 1.5}{west}{\rho'};
	\btoken{.5, 1.5}{east}{\rho};
	\btoken{.5, 1}{west}{\Delta};
\end{tikzpicture} \ + \ 
\begin{tikzpicture}[anchorbase]
	\draw[bmod] (0, 0) -- (0,.75) -- (0.5, 1.25)--(0.25,1.5)--(.25,1.75);
	\draw[bmod] (0.25, 0) --(.25,.75)--(0,1)--(0,1.25)--(.25,1.5)--(.25,1.75);
	\draw[bmod] (.25, .75) -- (1, 1.5)--(1,1.75);
	\draw[bmod] (.75, 0) -- (.75,1)--(0.25,1.5);
	\draw[bmod] (1.25, 0) -- (1.25, 1.25)--(1,1.5);
	\btoken{0.5, 1.25}{south}{\gamma};
	\btoken{1, 1.5}{west}{\rho'};
	\btoken{.25, 1.5}{east}{\rho};
	\btoken{.25, .75}{west}{\Delta};
\end{tikzpicture} \ + \ 
\begin{tikzpicture}[anchorbase]
	\draw[bmod] (0, 0) -- (0,.75) -- (0.75, 1.5)--(0.75,1.75);
	\draw[bmod] (0.5, 0) --(.5,.25)--(.25,.5)--(.5,.75)--(.25,1)--(.25,1.75);
	\draw[bmod] (.5, .25) -- (.75, .5)--(.75,1)--(.5,1.25);
	\draw[bmod] (1, 0) -- (1,.25)--(0.5,.75);
	\draw[bmod] (1.25, 0) -- (1.25, 1)--(.75,1.5);
	\btoken{0.5, 1.25}{north}{\alpha};
	\btoken{.75, 1.5}{west}{\rho'};
	\btoken{.5, .75}{east}{\rho};
	\btoken{.5, .25}{west}{\Delta};
\end{tikzpicture} \ + \ 
\begin{tikzpicture}[anchorbase]
	\draw[bmod] (0.25, 0) -- (0.25,.75) -- (0, 1)--(0.5,1.5)--(.5,1.75);
	\draw[bmod] (0.25, 0.75) --(1,1.5)--(1,1.75);
	\draw[bmod] (.75, 0) -- (.75, .75)--(1.25,1.25)--(1,1.5);
	\draw[bmod] (1.25, 0) -- (1.25,.75)--(0.5,1.5); 		
	\draw[bmod] (1.5, 0) -- (1.5, 1)--(1,1.5);
	\btoken{1.25, 1.25}{north}{\gamma'};
	\btoken{1, 1.5}{west}{\rho'};
	\btoken{.5, 1.5}{east}{\rho};
	\btoken{.25, .75}{east}{\Delta};
\end{tikzpicture}
\end{equation*}
\[=\]
\begin{equation*}
\begin{tikzpicture}[anchorbase]
	\draw[bmod] (0, 0) -- (0,1) -- (0.5, 1.5)--(0.5,1.75);
	\draw[bmod] (0.5, 0) --(.5,1)--(.25,1.25);
	\draw[bmod] (1, 0) -- (1,1)--(0.5,1.5);
	\draw[bmod] (.5, 1) -- (1, 1.5)--(1,1.75);
	\draw[bmod] (1.25, 0) -- (1.25, 1.25)--(1,1.5);
	\btoken{0.25, 1.25}{north}{\alpha};
	\btoken{1, 1.5}{west}{\rho'};
	\btoken{.5, 1.5}{east}{\rho};
	\btoken{.5, 1}{west}{\Delta};
\end{tikzpicture} \ + \ 
\begin{tikzpicture}[anchorbase]
	\draw[bmod] (0, 0) -- (1,1) -- (0.5, 1.5)--(0.5,1.75);
	\draw[bmod] (0.25, 0) --(.25,.75)--(0,1)--(.5,1.5);
	\draw[bmod] (.25, .75) -- (1,1.5)--(1,1.75);
	\draw[bmod] (1.25, 0) -- (1.25, .75)--(1,1);
	\draw[bmod] (1.5, 0) -- (1.5, 1)--(1,1.5);
	\btoken{1, 1}{north}{\gamma};
	\btoken{1, 1.5}{west}{\rho'};
	\btoken{.5, 1.5}{east}{\rho};
	\btoken{.25, .75}{east}{\Delta};
\end{tikzpicture}\ + \ 
\begin{tikzpicture}[anchorbase]
	\draw[bmod] (0, 0) -- (0,.25) -- (1.25, 1.5)--(1.25,1.75);
	\draw[bmod] (0.75, 0) --(.75,.25)--(0.25,.75)--(.25,1.25)--(.5,1.5)--(.5,1.75);
	\draw[bmod] (.75, .25) -- (1.25,.75)--(1.25,1)--(1,1.25);
	\draw[bmod] (1.5, 0) -- (1.5, .5)--(.5,1.5);
	\draw[bmod] (1.75, 0) -- (1.75, 1)--(1.25,1.5);
	\btoken{1, 1.25}{south}{\alpha};
	\btoken{1.25, 1.5}{west}{\rho'};
	\btoken{.5, 1.5}{east}{\rho};
	\btoken{.75, .25}{east}{\Delta};
\end{tikzpicture} \ + \ 
\begin{tikzpicture}[anchorbase]
	\draw[bmod] (0.25, 0) -- (0.25,.75) -- (0, 1)--(0.5,1.5)--(.5,1.75);
	\draw[bmod] (0.25, 0.75) --(1,1.5)--(1,1.75);
	\draw[bmod] (.75, 0) -- (.75, .75)--(1.25,1.25)--(1,1.5);
	\draw[bmod] (1.25, 0) -- (1.25,.75)--(0.5,1.5); 		
	\draw[bmod] (1.5, 0) -- (1.5, 1)--(1,1.5);
	\btoken{1.25, 1.25}{north}{\gamma'};
	\btoken{1, 1.5}{west}{\rho'};
	\btoken{.5, 1.5}{east}{\rho};
	\btoken{.25, .75}{east}{\Delta};
\end{tikzpicture}
\end{equation*}
\[ =\]
\begin{equation*}
	\begin{tikzpicture}[anchorbase]
		\draw[bmod] (0, 0) -- (0,1) -- (0.5, 1.5)--(0.5,1.75);
		\draw[bmod] (0.5, 0) --(.5,1)--(.25,1.25);
		\draw[bmod] (1, 0) -- (1,1)--(0.5,1.5);
		\draw[bmod] (.5, 1) -- (1, 1.5)--(1,1.75);
		\draw[bmod] (1.25, 0) -- (1.25, 1.25)--(1,1.5);
		\btoken{0.25, 1.25}{north}{\alpha};
		\btoken{1, 1.5}{west}{\rho'};
		\btoken{.5, 1.5}{east}{\rho};
		\btoken{.5, 1}{west}{\Delta};
	\end{tikzpicture} \ + \ 
	\begin{tikzpicture}[anchorbase]
		\draw[bmod] (0, 0) -- (1,1) -- (0.5, 1.5)--(0.5,1.75);
		\draw[bmod] (0.25, 0) --(.25,.75)--(0,1)--(.5,1.5);
		\draw[bmod] (.25, .75) -- (1,1.5)--(1,1.75);
		\draw[bmod] (1.25, 0) -- (1.25, .75)--(1,1);
		\draw[bmod] (1.5, 0) -- (1.5, 1)--(1,1.5);
		\btoken{1, 1}{north}{\gamma};
		\btoken{1, 1.5}{west}{\rho'};
		\btoken{.5, 1.5}{east}{\rho};
		\btoken{.25, .75}{east}{\Delta};
	\end{tikzpicture}\ + \ 
	\begin{tikzpicture}[anchorbase]
		\draw[bmod] (0, 0) -- (0,.5) -- (1, 1.5)--(1,1.75);
		\draw[bmod] (0.5, 0) --(.5,.25)--(0.25,.5)--(.25,1.25)--(.5,1.5)--(.5,1.75);
		\draw[bmod] (.5, .25) -- (.75,.5)--(.75,.75)--(.5,1);
		\draw[bmod] (1, 0) -- (1, 1)--(.5,1.5);
		\draw[bmod] (1.25, 0) -- (1.25, 1.25)--(1,1.5);
		\btoken{.5, 1}{west}{\alpha};
		\btoken{1, 1.5}{west}{\rho'};
		\btoken{.5, 1.5}{east}{\rho};
		\btoken{.5, .25}{east}{\Delta};
	\end{tikzpicture} \ + \ 
	\begin{tikzpicture}[anchorbase]
		\draw[bmod] (0.25, 0) -- (0.25,.75) -- (0, 1)--(0.5,1.5)--(.5,1.75);
		\draw[bmod] (0.25, 0.75) --(1,1.5)--(1,1.75);
		\draw[bmod] (.75, 0) -- (.75, .75)--(1.25,1.25)--(1,1.5);
		\draw[bmod] (1.25, 0) -- (1.25,.75)--(0.5,1.5); 		
		\draw[bmod] (1.5, 0) -- (1.5, 1)--(1,1.5);
		\btoken{1.25, 1.25}{north}{\gamma'};
		\btoken{1, 1.5}{west}{\rho'};
		\btoken{.5, 1.5}{east}{\rho};
		\btoken{.25, .75}{east}{\Delta};
	\end{tikzpicture}
\end{equation*}
\[=\]
\begin{equation*}
	\begin{tikzpicture}[anchorbase]
		\draw[bmod] (0, 0) -- (0,.25) -- (0.25, .5)--(0.25,.75)--(0,1)--(.5,1.5)--(.5,1.75);
		\draw[bmod] (0.25, 0.75) --(1,1.5)--(1,1.75);
		\draw[bmod] (.5, 0) -- (.5,.25)--(.25,.5);
		\draw[bmod] (1, 0) -- (1, 1)--(.5,1.5);
		\draw[bmod] (1.25, 0) -- (1.25, 1.25)--(1,1.5);
		\btoken{.25, .5}{west}{\alpha};
		\btoken{1, 1.5}{west}{\rho'};
		\btoken{.5, 1.5}{east}{\rho};
		\btoken{.25, .75}{east}{\Delta};
	\end{tikzpicture} \ + 
\begin{tikzpicture}[anchorbase]
	\draw[bmod] (0, 0) -- (1,1) -- (0.5, 1.5)--(0.5,1.75);
	\draw[bmod] (0.25, 0) --(.25,.75)--(0,1)--(.5,1.5);
	\draw[bmod] (.25, .75) -- (1,1.5)--(1,1.75);
	\draw[bmod] (1.25, 0) -- (1.25, .75)--(1,1);
	\draw[bmod] (1.5, 0) -- (1.5, 1)--(1,1.5);
	\btoken{1, 1}{north}{\gamma};
	\btoken{1, 1.5}{west}{\rho'};
	\btoken{.5, 1.5}{east}{\rho};
	\btoken{.25, .75}{east}{\Delta};
\end{tikzpicture} \ + \ 
\begin{tikzpicture}[anchorbase]
	\draw[bmod] (0.25, 0) -- (0.25,.75) -- (0, 1)--(0.5,1.5)--(.5,1.75);
	\draw[bmod] (0.25, 0.75) --(1,1.5)--(1,1.75);
	\draw[bmod] (.75, 0) -- (.75, .75)--(1.25,1.25)--(1,1.5);
	\draw[bmod] (1.25, 0) -- (1.25,.75)--(0.5,1.5); 		
	\draw[bmod] (1.5, 0) -- (1.5, 1)--(1,1.5);
	\btoken{1.25, 1.25}{north}{\gamma'};
	\btoken{1, 1.5}{west}{\rho'};
	\btoken{.5, 1.5}{east}{\rho};
	\btoken{.25, .75}{east}{\Delta};
\end{tikzpicture} \ .
\end{equation*}
The first equality follows from the compatibility conditions for $(M;\rho,\gamma)$ and $(M';\rho',\gamma')$. The natural isomorphism of $$\Big(c_{L,A\otimes M}, \id_L, (\rho\otimes \id_A)\circ (\id_A \otimes c_{A,M})\circ (\Delta\otimes \id_M) \Big)$$ implies the next equality. Applying the natural isomorphism of $(c_{L,A\otimes M}, \id_L,\rho)$ together with Lemma \ref{natof1}, we have the third equality. Finally, the natural isomorphism of $(c_{L\otimes A,M},\alpha,\id_M)$ and the condition \eqref{compatibleofh} ensure last two equality. 
\end{proof}
		
It is clear that $(I,\varepsilon)$ is a left $A$-module and $(I, {\bf 0})$ is an $L$-module, where ${\bf 0}$ denotes the zero morphism. 
		
\begin{lem}\label{comofunit}
Assume that $(A,L,\alpha)$ is a Lie-Rinehart bimonoid. Then the triple  $(I;\varepsilon, \textbf{0}) \in {\bf WMod}^{\calC}_A{\big(L\big)}$.
\end{lem}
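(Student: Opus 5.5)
We verify the three defining conditions of a weak module for the triple $(I;\varepsilon,\mathbf{0})$ one at a time; each reduces to a short identity.

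First, $(I,\varepsilon)$ is a left $A$-module. Since $\calC$ is strict, $A\otimes I=A$, so the action $\varepsilon:A\to I$ is a morphism of the right type. Associativity of the action reads $\varepsilon\circ m=\varepsilon\otimes\varepsilon$, which is the bimonoid axiom saying that the counit is multiplicative. The unit axiom reads $\varepsilon\circ\eta=\id_I$, which is the bimonoid axiom saying that the counit is unital. Both hold because $(A,m,\eta,\Delta,\varepsilon)$ is a bimonoid.

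Second, $(I,\mathbf{0})$ is an $L$-module, where $\mathbf{0}:L\otimes I=L\to I$. Every term in the $L$-module identity (compare the representation identity used for $(M,\gamma)$ in the proof of Lemma~\ref{main1}) contains a factor of $\mathbf{0}$. Since $\calC$ is linear, composition and $\otimes$ are bilinear, so all terms vanish and the identity becomes $0=0$.

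Third, and this is the only place the Lie-Rinehart bimonoid hypothesis enters, we check the compatibility condition \eqref{compatible2} with $\rho=\varepsilon$ and $\gamma=\mathbf{0}$. The left-hand side is $\mathbf{0}\circ(\id_L\otimes\varepsilon)=0$. The second term on the right is $\varepsilon\circ(\id_A\otimes\mathbf{0})\circ(c_{L,A}\otimes\id_I)=0$. What remains is the first term on the right, which is $\varepsilon\circ\alpha:L\otimes A\to I$. This vanishes by the second identity in \eqref{compatibleofh}. Hence \eqref{compatible2} holds as $0=0+0$.

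No step is a genuine obstacle. The only point needing care is to keep the strict identifications $A\otimes I=A$ and $L\otimes I=L$ explicit, so that the diagrams in \eqref{compatible2} specialize exactly to the morphisms written above, and to note that the counit axiom $\varepsilon\circ\alpha=0$ is precisely what makes the anchor term disappear.
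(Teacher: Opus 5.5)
Your proposal is correct and follows the paper's route. The paper treats the $A$-module and $L$-module conditions as clear and reduces the compatibility condition \eqref{compatible2}, exactly as you do, to $\varepsilon\circ\alpha=0$, which is the second identity in \eqref{compatibleofh}. Your explicit checks via $\varepsilon\circ m=\varepsilon\otimes\varepsilon$, $\varepsilon\circ\eta=\id_I$, and bilinearity simply fill in the steps the paper leaves to the reader.
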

\begin{proof}
It remains only to verify the compatibility condition \eqref{compatible2}.
This follows immediately from the defining compatibility axiom of a
Lie-Rinehart bimonoid together with the fact that the $L$-action on $I$ is the zero action. Hence $(I;\varepsilon,{\bf 0})$ is a weak module over $(A,L,\alpha)$.
\end{proof}
Combining Proposition XV. 1.2 in \cite{Kas95} with Lemmas \ref{comoften}--\ref{comofunit}, the following result is obtained. 
\begin{theor}
The category ${\bf WMod}^{\mathcal C}_A(L)$ is a strict monoidal category under the tensor product
$(M;\rho,\gamma)\otimes (M';\rho',\gamma')
=
(M\otimes M';\rho\boxtimes \rho',\gamma\boxplus \gamma')$
with unit object $(I;\varepsilon,{\bf 0})$ if and only if $(A,L,\alpha)$ is a Lie-Rinehart bimonoid.
\end{theor}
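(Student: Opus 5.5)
The plan is to prove the two directions separately. The backbone is the standard fact (Proposition XV.1.2 in \cite{Kas95}) that a strict monoidal structure on a category of objects of $\calC$ with extra structure, whose tensor product, unit and morphisms are those of $\calC$, reduces to two checks: the tensor product of two objects and the unit object must again lie in the category, and the tensor product of morphisms must again be a morphism.

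For the ``if'' direction, assume $(A,L,\alpha)$ is a Lie-Rinehart bimonoid.
\begin{enumerate}
\item Lemma \ref{comoften} shows that $(M\otimes M';\rho\boxtimes\rho',\gamma\boxplus\gamma')$ is a weak module.
\item Lemma \ref{comofunit} shows that $(I;\varepsilon,\mathbf 0)$ is a weak module.
\item If $\phi,\phi'$ are homomorphisms of weak modules, then $\phi\otimes\phi'$ is simultaneously an $A$-module map for $\boxtimes$ and an $L$-module map for $\boxplus$. This holds because ${\bf Mod}_{\calC}(A)$ and ${\bf Mod}_{\calC}(L)$ are already monoidal via \eqref{tensorofasso} and \eqref{tensoroflmodule}.
\item Strictness is inherited from $\calC$. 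Associativity $(\rho\boxtimes\rho')\boxtimes\rho''=\rho\boxtimes(\rho'\boxtimes\rho'')$ and $(\gamma\boxplus\gamma')\boxplus\gamma''=\gamma\boxplus(\gamma'\boxplus\gamma'')$ hold by coassociativity of $\Delta$ and by the braid relations \eqref{braid1}, \eqref{braid2}. The unit laws $\varepsilon\boxtimes\rho=\rho$ and $\mathbf 0\boxplus\gamma=\gamma$ hold by counitality.
\end{enumerate}

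For the ``only if'' direction, assume the tensor product of weak modules and the unit $(I;\varepsilon,\mathbf 0)$ are always weak modules, and specialize to the canonical weak module $(A;m,\alpha)$ noted after the definition of weak modules.
\begin{enumerate}
\item Compatibility \eqref{compatible2} for $(I;\varepsilon,\mathbf 0)$ reads $\varepsilon\circ m\circ(\ldots)$ with the $L$-action zero, so the left side is $0$ and the right side is $\varepsilon\circ\alpha$. Hence $\varepsilon\circ\alpha=0$.
\item Apply \eqref{compatible2} to $A\otimes A$ with actions $m\boxtimes m$ and $\alpha\boxplus\alpha$, and precompose with $\id_L\otimes\id_A\otimes\eta\otimes\eta$.
\item Use the unit axiom of $m$ and $\Delta\circ\eta=\eta\otimes\eta$ (a bimonoid axiom). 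The identity then reduces to
\[
\Delta\circ\alpha=(\alpha\otimes\id_A)\circ(\id_L\otimes\Delta)+(\id_A\otimes\alpha)\circ(c_{L,A}\otimes\id_A)\circ(\id_L\otimes\Delta)+\text{(terms involving }\alpha\circ(\id_L\otimes\eta)).
\]
\item By \eqref{derivation} applied with both $A$-inputs equal to $\eta$, we get $\alpha\circ(\id_L\otimes\eta)=0$, so the extra terms vanish. What remains is exactly the first axiom of \eqref{compatibleofh}.
\end{enumerate}

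The main obstacle is the diagrammatic bookkeeping in step 3 of the converse: tracking the braidings introduced by $\boxtimes$ and $\boxplus$ carefully enough to confirm that inserting units really isolates $\Delta\circ\alpha$ with the braiding in the correct place. For this I would reuse the naturality manipulations already carried out in the proof of Lemma \ref{comoften}, run in reverse.
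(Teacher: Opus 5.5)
\textbf{Comparison with the paper.} Your ``if'' direction is the paper's proof. The paper only cites Proposition~XV.1.2 of \cite{Kas95} together with Lemmas~\ref{comoften} and~\ref{comofunit}. Your items on morphisms and strictness spell out what that citation is meant to cover. One small slip there: $\mathbf 0\boxplus\gamma=\gamma$ does not follow from counitality. It holds because the first summand vanishes and $c_{L,I}=\id_L$.

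The genuine difference is the ``only if'' direction. The paper gives no argument for it: the two lemmas establish sufficiency only. Your converse is therefore what makes the stated equivalence actually proved, and it is correct.

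\textbf{Checking your converse.} Testing on the unit gives $\varepsilon\circ\alpha=0$ at once. Testing on $(A;m,\alpha)\otimes(A;m,\alpha)$ is legitimate, because $(A;m,\alpha)$ is a weak module without any bimonoid hypothesis, by \eqref{derivation} and \eqref{liehom}.

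The bookkeeping you flag as the main obstacle is short.
\begin{enumerate}
\item The unit axiom of $m$ and $c_{A,A}\circ(\id_A\otimes\eta)=\eta\otimes\id_A$ give $(m\boxtimes m)\circ(\id_A\otimes\eta\otimes\eta)=\Delta$.
\item Precompose \eqref{compatible2} with $\id_L\otimes\id_A\otimes\eta\otimes\eta$. The left side becomes $(\alpha\boxplus\alpha)\circ(\id_L\otimes\Delta)$. This is exactly the right-hand side of the first identity in \eqref{compatibleofh}, with $c_{L,A}$ in the correct place.
\item The first right-hand term becomes $\Delta\circ\alpha$.
\item The second right-hand term becomes $(m\boxtimes m)\circ\bigl(\id_A\otimes[(\alpha\boxplus\alpha)\circ(\id_L\otimes\eta\otimes\eta)]\bigr)\circ c_{L,A}$.
\item This last term vanishes because $\alpha\circ(\id_L\otimes\eta)=0$, by your argument from \eqref{derivation}.
\end{enumerate}
Note that the bimonoid axiom $\Delta\circ\eta=\eta\otimes\eta$ is never actually used.
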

		
In what follows, we assume that ${\bf WMod}^{\calC}_A{\big(L\big)}$ is a strict monoidal category. By Corollary~\ref{newfunctor}, for each fixed
object $V\in \mathrm{ob}(\mathcal C),$
we obtain an endofunctor
\[
\mathcal F_{\mathcal H}^{V}:
{\bf WMod}^{\mathcal C}_A(L)
\rightarrow
{\bf WMod}^{\mathcal C}_A(L).
\]
Recall that a functor $F:\calC \to \mathcal{D}$ between two monoidal categories is a \emph{strict monoidal functor} if we have 
\[F(I)=I \ \mathrm{and} \ F(V\otimes W)=F(V)\otimes F(W)  \]
for all $V,W\in \ob(\calC)$.

It is then natural to ask under what conditions this endofunctor is strict monoidal.		
In the present situation, since
$\mathcal F_{\mathcal H}^{V}(I;\varepsilon,{\bf 0})=V,$
the unit condition forces $V=I$. Therefore, among these endofunctors, only $\mathcal F_{\mathcal H}^{I}$
can possibly be strict monoidal. We shall denote this functor simply by $\mathcal F_{\mathcal H}.$ Moreover, for any $(M;\rho,\gamma)\in$ ${\bf WMod}^{\calC}_A{\big(L\big)}$,  we have 
\[ \mathcal{F}_{\mathcal{H}}(\rho)=\rho \ \mathrm{and} \  \mathcal{F}_{\mathcal{H}}(\gamma)=\gamma+\rho\circ (\mathcal{H}\otimes \id_M).\] It remains to determine when this functor is compatible with the tensor
product on ${\bf WMod}^{\mathcal C}_A(L)$. The answer is governed by the compatibility of $\mathcal H$ with the comultiplication of $A$. 
		
\begin{theor}
The functor $\mathcal{F}_{\mathcal{H}}$ is a strict monoidal functor if and only if 
\begin{equation}\label{comoftensor}
\begin{tikzpicture}[anchorbase]
					\draw[bmod] (0.25, 0) -- (0.25, .5) -- (0, .75);
					\draw[bmod] (.25,.5) -- (0.5,.75);
					\btoken{.25, .25}{west}{\mathcal{H}};
					\btoken{.25, .5}{east}{\Delta};
				\end{tikzpicture} \ = \ 
				\begin{tikzpicture}[anchorbase]
					\draw[bmod] (0, 0) -- (0, .75);
					\draw[bmod] (.25,.25) -- (0.25,.75);
					\btoken{0, .25}{east}{\mathcal{H}};
					\btoken{.25, .25}{west}{\eta};
				\end{tikzpicture} \ + \ 
				\begin{tikzpicture}[anchorbase]
					\draw[bmod] (0, 0.25) -- (0, .75);
					\draw[bmod] (.25,0) -- (0.25,.75);
					\btoken{0.25, .25}{west}{\mathcal{H}};
					\btoken{0, .25}{east}{\eta};
				\end{tikzpicture}\ .
\end{equation}
\end{theor}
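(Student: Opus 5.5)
The plan is to compare the two sides of the strict monoidal condition directly. Here $\mathcal F_{\mathcal H}$ is identity on morphisms and on the $A$-actions, and it sends $\gamma\mapsto\gamma+\rho\circ(\mathcal H\otimes\mathrm{id}_M)$, with $\mathcal H:L\to A$. Compatibility on morphisms is therefore automatic, and the tensor products agree on the $A$-part: both are $\rho\boxtimes\rho'$. What remains are the unit condition and the equality of the two induced $L$-actions on $M\otimes M'$.

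\emph{Comparing the $L$-actions.} For weak modules $(M;\rho,\gamma)$ and $(M';\rho',\gamma')$, write out both sides.
\begin{itemize}
\item $\mathcal F_{\mathcal H}(M\otimes M')$ has $L$-action $\gamma\boxplus\gamma'+(\rho\boxtimes\rho')\circ(\mathcal H\otimes\mathrm{id}_{M\otimes M'})$.
\item $\mathcal F_{\mathcal H}(M)\otimes\mathcal F_{\mathcal H}(M')$ has $L$-action $(\gamma+\rho(\mathcal H\otimes\mathrm{id}))\boxplus(\gamma'+\rho'(\mathcal H\otimes\mathrm{id}))$.
\end{itemize}
Since $\boxplus$ is additive in each slot, the $\gamma\boxplus\gamma'$ terms cancel. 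The condition reduces to
\[
(\rho\boxtimes\rho')\circ(\mathcal H\otimes\mathrm{id})=\rho\circ(\mathcal H\otimes\mathrm{id}_M)\otimes\mathrm{id}_{M'}+(\mathrm{id}_M\otimes\rho'\circ(\mathcal H\otimes\mathrm{id}_{M'}))\circ(c_{L,M}\otimes\mathrm{id}_{M'}).
\]
Recall that $\rho\boxtimes\rho'=(\rho\otimes\rho')\circ(\mathrm{id}_A\otimes c_{A,M}\otimes\mathrm{id}_{M'})\circ(\Delta\otimes\mathrm{id})$. So the left side is $(\rho\otimes\rho')\circ(\mathrm{id}\otimes c\otimes\mathrm{id})\circ(\Delta\mathcal H\otimes\mathrm{id})$.

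\emph{Sufficiency.} Assume \eqref{comoftensor}. Substitute $\Delta\mathcal H=\mathcal H\otimes\eta+\eta\otimes\mathcal H$ into this expression.
\begin{itemize}
\item The first summand gives $\rho(\mathcal H\otimes\mathrm{id})\otimes\rho'(\eta\otimes\mathrm{id})$. By the unit axiom $\rho'(\eta\otimes\mathrm{id})=\mathrm{id}$, and naturality of $c_{A,M}$ moves $\eta$ past $M$.
\item The second summand, by naturality of the braiding $(c_{A,M},\mathcal H,\mathrm{id}_M)$ and $c_{I,M}=\mathrm{id}$, becomes $(\mathrm{id}_M\otimes\rho'(\mathcal H\otimes\mathrm{id}))\circ(c_{L,M}\otimes\mathrm{id})$.
\end{itemize}
This is exactly the right-hand side.

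For the unit, $\mathcal F_{\mathcal H}(I;\varepsilon,\mathbf 0)=(I;\varepsilon,\varepsilon\circ\mathcal H)$, so we need $\varepsilon\circ\mathcal H=0$. Apply $\varepsilon\otimes\mathrm{id}_A$ to \eqref{comoftensor} and use the counit axiom together with $\varepsilon\eta=\mathrm{id}_I$. This gives $\mathcal H=(\varepsilon\mathcal H)\,\eta+\mathcal H$, so $(\varepsilon\mathcal H)\,\eta=0$. Applying $\varepsilon$ once more yields $\varepsilon\circ\mathcal H=0$.

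\emph{Necessity.} This is the step needing a good test object; I expect it to be the main point. Take $M=M'=(A;m,\alpha)$, which is a weak module by \eqref{derivation}, as noted after the definition of weak modules. The displayed identity must hold for this pair. Precompose both sides with $\mathrm{id}_L\otimes\eta\otimes\eta$.
\begin{itemize}
\item On the left, naturality of $c_{A,A}$ with respect to $\eta$ and the unit axioms of $m$ collapse $(m\otimes m)\circ(\mathrm{id}\otimes c\otimes\mathrm{id})\circ(\Delta\mathcal H\otimes\eta\otimes\eta)$ to $\Delta\circ\mathcal H$.
\item On the right, the same manipulations give $\mathcal H\otimes\eta+\eta\otimes\mathcal H$, using $c_{L,I}=\mathrm{id}$ and naturality for the braided $\eta$.
\end{itemize}
Hence \eqref{comoftensor} holds. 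The only care needed is bookkeeping of the braidings with the unit, which the strictness assumptions make identities. Unit preservation is already implied by \eqref{comoftensor}, so no separate condition arises.
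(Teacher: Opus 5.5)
Your proof is correct, and the sufficiency direction is the same computation as the paper's. The paper starts from $\mathcal F_{\mathcal H}(\gamma)\boxplus\mathcal F_{\mathcal H}(\gamma')$ and passes $\mathcal H$ through the braiding by naturality of $c_{L,M}$. It then inserts units via the $A$-module unit axioms and $c_{I,M}=\mathrm{id}$, and applies $\Delta\circ\mathcal H=\mathcal H\otimes\eta+\eta\otimes\mathcal H$ to reach $\mathcal F_{\mathcal H}(\gamma\boxplus\gamma')$. That is your substitution read backwards.

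Where you go beyond the paper is in the two points it skips. First, the paper simply asserts ``$\mathcal F_{\mathcal H}(I)=I$''. But $\mathcal F_{\mathcal H}(I;\varepsilon,\mathbf 0)=(I;\varepsilon,\varepsilon\circ\mathcal H)$, so unit preservation requires $\varepsilon\circ\mathcal H=0$. Your derivation of this from the condition (apply $\varepsilon\otimes\mathrm{id}_A$, then $\varepsilon$, using $\varepsilon\circ\eta=\mathrm{id}_I$) closes that gap. Second, the paper gives no argument for the ``only if'' direction. Your choice of test object, the weak module $(A;m,\alpha)$ (a weak module by \eqref{liehom} and \eqref{derivation}) precomposed with $\mathrm{id}_L\otimes\eta\otimes\eta$, cleanly extracts the condition from the reduced identity. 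I checked both computations and they work.

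One small wording point: $\boxplus$ is not additive in each slot separately, since $(\gamma_1+\gamma_2)\boxplus\gamma'\neq\gamma_1\boxplus\gamma'+\gamma_2\boxplus\gamma'$. What you actually use is that $(\gamma,\gamma')\mapsto\gamma\otimes\mathrm{id}_{M'}+(\mathrm{id}_M\otimes\gamma')\circ(c_{L,M}\otimes\mathrm{id}_{M'})$ is linear in the pair. That does give the cancellation of the $\gamma\boxplus\gamma'$ terms you claim.
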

\begin{proof}
Since $\mathcal{F}_{\mathcal{H}}(I)=I$, it remains to show that $ \mathcal{F}_{\mathcal{H}}(M) \otimes \mathcal{F}_{\mathcal{H}}(M') =\mathcal{F}_{\mathcal{H}}(M\otimes M')$ for any $(M;\rho,\gamma), (M';\rho', \gamma')\in$ ${\bf WMod}^{\calC}_A{\big(L\big)}$. This reduces to proving the equality \[\mathcal{F}_{\mathcal{H}}(\gamma \boxplus \gamma')=\mathcal{F}_{\mathcal{H}}(\gamma) \boxplus \mathcal{F}_{\mathcal{H}}(\gamma').\] The proof is given by
\begin{equation*}
\begin{tikzpicture}[anchorbase]
	\draw[bmod] (0, 0) -- (0,.5) -- (0.25, .75)--(.25,1);
	\draw[bmod] (0.5, 0) --(.5,.5)--(.25,.75);
	\draw[bmod] (.75, 0) -- (.75, 1);
	\btoken{.25, .75}{east}{\gamma};
\end{tikzpicture}\ + \ 
\begin{tikzpicture}[anchorbase]
	\draw[bmod] (0, 0) -- (0,.5) -- (0.25, .75)--(.25,1);
	\draw[bmod] (0.5, 0) --(.5,.5)--(.25,.75);
	\draw[bmod] (.75, 0) -- (.75, 1);
	\btoken{.25, .75}{east}{\rho};
    \btoken{0, .5}{east}{\mathcal{H}};
\end{tikzpicture} \ + \ 
\begin{tikzpicture}[anchorbase]
	\draw[bmod] (0, 0) -- (0,.5) -- (0.25, .75)--(.25,1);
	\draw[bmod] (0.5, 0) --(.5,.5)--(.25,.75);
	\draw[bmod] (.25, 0) -- (.25, .5)--(0,.75)--(0,1);
	\btoken{.25, .75}{west}{\gamma'};
\end{tikzpicture} \ + \ 
\begin{tikzpicture}[anchorbase]
	\draw[bmod] (0.25, 0) -- (0.25,.5) -- (0.5, .75)--(.5,1);
	\draw[bmod] (0.75, 0) --(.75,.5)--(.5,.75);
	\draw[bmod] (.5, 0) -- (0, .5)--(0,1);
	\btoken{.5, .75}{west}{\rho'};
    \btoken{.25, .5}{west}{\mathcal{H}};
\end{tikzpicture}
\end{equation*}
\[=\]
\begin{equation*}
\begin{tikzpicture}[anchorbase]
	\draw[bmod] (0, 0) -- (0,.5) -- (0.25, .75)--(.25,1);
	\draw[bmod] (0.5, 0) --(.5,.5)--(.25,.75);
	\draw[bmod] (.75, 0) -- (.75, 1);
	\btoken{.25, .75}{east}{\gamma};
\end{tikzpicture}\ + \ 
\begin{tikzpicture}[anchorbase]
	\draw[bmod] (0, 0) -- (0,.5) -- (0.25, .75)--(.25,1);
	\draw[bmod] (0.5, 0) --(.5,.5)--(.25,.75);
	\draw[bmod] (.75, 0) -- (.75, 1);
	\btoken{.25, .75}{east}{\rho};
    \btoken{0, .5}{east}{\mathcal{H}};
\end{tikzpicture} \ + \ 
\begin{tikzpicture}[anchorbase]
	\draw[bmod] (0, 0) -- (0,.5) -- (0.25, .75)--(.25,1);
	\draw[bmod] (0.5, 0) --(.5,.5)--(.25,.75);
	\draw[bmod] (.25, 0) -- (.25, .5)--(0,.75)--(0,1);
	\btoken{.25, .75}{west}{\gamma'};
\end{tikzpicture} \ + \ 
\begin{tikzpicture}[anchorbase]
	\draw[bmod] (0, 0) -- (0,.5) -- (0.25, .75)--(.25,1);
	\draw[bmod] (0.5, 0) --(.5,.5)--(.25,.75);
	\draw[bmod] (.25, 0) -- (0.25, .5)--(0,.75)--(0,1);
	\btoken{.25, .75}{west}{\rho'};
    \btoken{0, .5}{east}{\mathcal{H}};
\end{tikzpicture}
\end{equation*}
\[=\]
\begin{equation*}
\begin{tikzpicture}[anchorbase]
	\draw[bmod] (0, 0) -- (0,.5) -- (0.25, .75)--(.25,1);
	\draw[bmod] (0.5, 0) --(.5,.5)--(.25,.75);
	\draw[bmod] (.75, 0) -- (.75, 1);
	\btoken{.25, .75}{east}{\gamma};
\end{tikzpicture} \ + \ 
\begin{tikzpicture}[anchorbase]
	\draw[bmod] (0, 0) -- (0,.5) -- (0.25, .75)--(.25,1);
	\draw[bmod] (0.5, 0) --(.5,.5)--(.25,.75);
	\draw[bmod] (.25, 0) -- (.25, .5)--(0,.75)--(0,1);
	\btoken{.25, .75}{west}{\gamma'};
\end{tikzpicture} \ + \ 
\begin{tikzpicture}[anchorbase]
	\draw[bmod] (0, 0) -- (0,.5) -- (0.25, .75)--(.25,1);
	\draw[bmod] (0.5, 0) --(.5,.5)--(.25,.75);
	\draw[bmod] (.75, 0) -- (.75, .5)--(1,.75)--(1,1);
    \draw[bmod] (1.25, 0) -- (1.25, 0.5)--(1,.75);
	\btoken{.25, .75}{east}{\rho};
    \btoken{0, .5}{east}{\mathcal{H}};
    \btoken{1, .75}{east}{\rho'};
    \btoken{0.75, 0}{west}{\eta};
\end{tikzpicture} \ + \ 
\begin{tikzpicture}[anchorbase]
	\draw[bmod] (0, 0) -- (0,.5) -- (0.25, .75)--(.25,1);
	\draw[bmod] (0.25, 0) --(.25,.25)--(.75,.75)--(0.75,1);
	\draw[bmod] (.75, 0) -- (.75, .25)--(.25,.75);
    \draw[bmod] (1, 0) -- (1, 0.5)--(.75,.75);
	\btoken{.25, .75}{east}{\rho};
    \btoken{0.25, .25}{west}{\mathcal{H}};
    \btoken{.75, .75}{west}{\rho'};
    \btoken{0, 0}{east}{\eta};
\end{tikzpicture}
\end{equation*}
\[=\]
\begin{equation*}
\begin{tikzpicture}[anchorbase]
	\draw[bmod] (0, 0) -- (0,.5) -- (0.25, .75)--(.25,1);
	\draw[bmod] (0.5, 0) --(.5,.5)--(.25,.75);
	\draw[bmod] (.75, 0) -- (.75, 1);
	\btoken{.25, .75}{east}{\gamma};
\end{tikzpicture} \ + \ 
\begin{tikzpicture}[anchorbase]
	\draw[bmod] (0, 0) -- (0,.5) -- (0.25, .75)--(.25,1);
	\draw[bmod] (0.5, 0) --(.5,.5)--(.25,.75);
	\draw[bmod] (.25, 0) -- (.25, .5)--(0,.75)--(0,1);
	\btoken{.25, .75}{west}{\gamma'};
\end{tikzpicture} \ + \ 
\begin{tikzpicture}[anchorbase]
	\draw[bmod] (0, 0) -- (0,.5) -- (0.25, .75)--(.25,1);
	\draw[bmod] (0.25, 0) --(.25,.25)--(.75,.75)--(.75,1);
	\draw[bmod] (.75, 0) -- (.75, .25)--(.25,.75);
    \draw[bmod] (1, 0) -- (1, 0.5)--(.75,.75);
	\btoken{.25, .75}{east}{\rho};
    \btoken{0, .5}{east}{\mathcal{H}};
    \btoken{.75, .75}{west}{\rho'};
    \btoken{0.25, 0}{west}{\eta};
\end{tikzpicture} \ + \ 
\begin{tikzpicture}[anchorbase]
	\draw[bmod] (0, 0) -- (0,.5) -- (0.25, .75)--(.25,1);
	\draw[bmod] (0.25, 0) --(.25,.25)--(.75,.75)--(0.75,1);
	\draw[bmod] (.75, 0) -- (.75, .25)--(.25,.75);
    \draw[bmod] (1, 0) -- (1, 0.5)--(.75,.75);
	\btoken{.25, .75}{east}{\rho};
    \btoken{0.25, .25}{west}{\mathcal{H}};
    \btoken{.75, .75}{west}{\rho'};
    \btoken{0, 0}{east}{\eta};
\end{tikzpicture}
\end{equation*}
\[=\]
\begin{equation*}
\begin{tikzpicture}[anchorbase]
	\draw[bmod] (0, 0) -- (0,.5) -- (0.25, .75)--(.25,1);
	\draw[bmod] (0.5, 0) --(.5,.5)--(.25,.75);
	\draw[bmod] (.75, 0) -- (.75, 1);
	\btoken{.25, .75}{east}{\gamma};
\end{tikzpicture} \ + \ 
\begin{tikzpicture}[anchorbase]
	\draw[bmod] (0, 0) -- (0,.5) -- (0.25, .75)--(.25,1);
	\draw[bmod] (0.5, 0) --(.5,.5)--(.25,.75);
	\draw[bmod] (.25, 0) -- (.25, .5)--(0,.75)--(0,1);
	\btoken{.25, .75}{west}{\gamma'};
\end{tikzpicture} \ + \ 
\begin{tikzpicture}[anchorbase]
	\draw[bmod] (0.25, 0) -- (0.25,.5) -- (0, .75)--(.25,1)--(.25,1.25);
	\draw[bmod] (0.25, 0.5) --(.75,1)--(.75,1.25);
	\draw[bmod] (.75, 0) -- (.75, .5)--(.25,1);
    \draw[bmod] (1, 0) -- (1, 0.75)--(.75,1);
	\btoken{.25, 1}{east}{\rho};
    \btoken{0.25, .25}{east}{\mathcal{H}};
    \btoken{.75, 1}{west}{\rho'};
    \btoken{0.25, 0.5}{west}{\Delta};
\end{tikzpicture} \ ,
\end{equation*}
where the first equality follows immediately from the natural isomorphism of the braiding $(c_{L,M},\mathcal{H},\id_M)$. Since $(M, \rho)$ and $(M', \rho')$ are $A$-modules, the second equality holds. The next equality is due to the fact that $c_{I,M}=\id_M$ and the natural isomorpism of the braiding $(c_{I,M},\eta,\id_M)$. Using the  condition \eqref{comoftensor}, the proof is complete.
\end{proof}
Next, we give some examples about Lie-Rinehart bimonoids. 
\begin{exam}
Let $\mathcal C=\mathcal{V}ec(\mathbb C)$. Let $A$ be a commutative bimonoid and let $\mathfrak g$ be a Lie
algebra. Set $L=\mathfrak g \otimes A$ with $A$-module structure given by multiplication on the first factor, Lie bracket
\[
	[a\otimes x,\ b\otimes y]=ab\otimes [x,y],
\]
and zero anchor $\alpha=0$. Then $(A,L,\alpha)$ is a Lie-Rinehart bimonoid.
\end{exam}

\begin{exam}
Let $\mathcal C=s\mathcal{V}ec(\mathbb C)$ and let
$A=\Lambda(\xi)=\mathbb C\oplus \mathbb C\xi$
be the exterior algebra on one odd generator $\xi$. 
We equip $A$ with the standard super-commutative bialgebra structure given by
\[
\Delta(1)=1\otimes 1 \ \ \text{and} \ \ 
\Delta(\xi)=\xi\otimes 1+1\otimes \xi,
\]
and
\[
	\varepsilon(1)=1,
	\quad
	\varepsilon(\xi)=0.
\]	
Define a derivation $d:A\longrightarrow A$ by $d(1)=0$ and $d(\xi)=\xi.$
Let $L=Ae$ be the free rank-one $A$-module generated by an even element $e$. The Lie-bracket is given by 
\[[ae,be]=ad(b)-(-1)^{|a||b|}bd(a)e   \]
for all $a,b\in A.$ Define the anchor $\alpha:L\otimes A\longrightarrow A$ by
\[
	\alpha(ae\otimes b)=a\,d(b),
	\forall a,b\in A.
\]
Then the additional compatibility condition \eqref{compatibleofh} is satisfied.
\end{exam}

\subsection{Rigid monoidal categories}
In this subsection, we study a dual version of Theorem \ref{maintheorem}.  
\begin{defi}
Fix an object $V\in \ob(\calC)$. An object $V^*\in\ob(\calC)$ is called a \emph{left duality} of $V$, if there are morphisms $b_V:I\to V\otimes V^*$ and $d_V:V^* \otimes V \to I$ such that 
\begin{align}\label{duality}
				(\id_V\otimes d_V)(b_V\otimes \id_V)=\id_V \ \mathrm{and} \ (d_V\otimes \id_{V^*})(\id_{V^*}\otimes b_V)=\id_{V^*}. 
\end{align}  
\end{defi}
The category $\calC$ is called a \emph{left rigid monoidal category} if every object $V\in \ob(\calC)$ has a left duality. 
From now on, we assume that $\calC$ is a left rigid monoidal category. 
Diagrapically, the morphisms $b_V$ and $d_V$ are represented by 
\begin{equation*}
			b_V \ = \ \dcup \ \ \text{and} \ \ d_V \ = \ \dcap \ . 
\end{equation*}
Then equation \eqref{duality} is equivalent to 
\begin{equation*}
			\begin{tikzpicture}[anchorbase]
				\draw[-, thick] (0,0) -- (0,0.6) arc(180:0:0.2) -- (0.4,0.4) arc(180:360:0.2) -- (0.8,1);
			\end{tikzpicture}
			\ =\
			\begin{tikzpicture}[anchorbase]
				\draw[-, thick] (0,0) -- (0,1);
			\end{tikzpicture}\  \ \text{and} \ \ 
			\begin{tikzpicture}[anchorbase]
				\draw[-, thick] (0, 1) -- (0, 0.4) arc(180:360:0.2) -- (0.4, 0.6) arc(180:0:0.2) -- (0.8, 0);
			\end{tikzpicture}
			\ =\
			\begin{tikzpicture}[anchorbase]
				\draw[-, thick] (0,0) -- (0,1);
			\end{tikzpicture} \ . 
\end{equation*}
By the naturality of the braiding, we have 
\begin{equation}\label{dualofu1}
			\begin{tikzpicture}[anchorbase]
				\draw[bmod] (0,0) -- (0,0.25) arc(180:0:0.25) -- (0.5,0);
				\draw[bmod] (0,0.5) -- (0.75,0);
			\end{tikzpicture} \ = \ 
			\begin{tikzpicture}[anchorbase]
				\draw[bmod] (0,0) -- (0,0.125) arc(180:0:0.125) -- (0.25,0);
				\draw[bmod] (0,0.5) -- (0.5,0);
			\end{tikzpicture}  \ \ \text{and} \ \ 
			\begin{tikzpicture}[anchorbase]
				\draw[bmod] (0,0) -- (0,0.25) arc(180:0:0.25) -- (0.5,0);
				\draw[bmod] (-0.25,0) -- (0.5,0.5);
			\end{tikzpicture} \ = \ 
			\begin{tikzpicture}[anchorbase]
				\draw[bmod] (0,0) -- (0,0.125) arc(180:0:0.125) -- (0.25,0);
				\draw[bmod] (-0.25,0) -- (0.25,0.5);
			\end{tikzpicture}   
\end{equation}
and
\begin{equation}\label{dualofu2}
			\begin{tikzpicture}[anchorbase]
				\draw[bmod] (0,0.25) -- (0,0) arc(180:360:0.25) -- (0.5,0.25);
				\draw[bmod] (-0.25,0.25) -- (0.5,-0.25);
			\end{tikzpicture} \ = \ 
			\begin{tikzpicture}[anchorbase]
				\draw[bmod] (0,0.125) -- (0,0) arc(180:360:0.125) -- (0.25,0.125);
				\draw[bmod] (0.25,-0.375) -- (-0.25,0.125);
			\end{tikzpicture}  \ \ \text{and} \ \ 
			\begin{tikzpicture}[anchorbase]
				\draw[bmod] (0,0.25) -- (0,0) arc(180:360:0.25) -- (0.5,0.25);
				\draw[bmod] (0,-0.25) -- (0.75,0.25);
			\end{tikzpicture} \ = \ 
			\begin{tikzpicture}[anchorbase]
				\draw[bmod] (0,0.125) -- (0,0) arc(180:360:0.125) -- (0.25,0.125);
				\draw[bmod] (0,-0.375) -- (0.5,0.125);
			\end{tikzpicture} \ . 
\end{equation}
The following lemma is used frequently: 
			\begin{equation*}
				\begin{tikzpicture}[anchorbase]
					\draw[bmod] (0,0) -- (0,0.25) arc(180:0:0.25) -- (0.5,0);
					\draw[bmod] (0.25,0) -- (0.5,0.5);
				\end{tikzpicture} \ = \ 
				\begin{tikzpicture}[anchorbase]
					\draw[bmod] (0,0) -- (0,0.25) arc(180:0:0.25) -- (0.5,0);
					\draw[bmod] (0,0.5) -- (0.25,0);
				\end{tikzpicture} \ \ \text{and} \ \ 
				\begin{tikzpicture}[anchorbase]
					\draw[bmod] (0,0.25) -- (0,0) arc(180:360:0.25) -- (0.5,0.25);
					\draw[bmod] (0.25,0.25) -- (0.5,-0.25);
				\end{tikzpicture} \ = \ 
				\begin{tikzpicture}[anchorbase]
					\draw[bmod] (0,0.25) -- (0,0) arc(180:360:0.25) -- (0.5,0.25);
					\draw[bmod] (0.25,0.25) -- (0,-0.25);
				\end{tikzpicture}\ .
			\end{equation*}
		
\begin{defi}
Let $f:U \to V$ be a morphism. Define $f^*:V^* \to U^*$ by 
			\begin{equation*}
				\begin{tikzpicture}[anchorbase]
					\draw[-, thick] (0,0) -- (0,0.6) arc(180:0:0.2) -- (0.4,0.4) arc(180:360:0.2) -- (0.8,1);
					\btoken{.4, .5}{west}{f};
				\end{tikzpicture} \ .
			\end{equation*}
\end{defi}
		
For any $V,W\in \ob(\calC)$, define the following morphisms $\lambda_{V,W}:W^*\otimes V^* \to (V\otimes W)^*$ and $\lambda_{V,W}^{-1}:(V\otimes W)^* \to W^* \otimes V^*$:
\begin{align*}
			\lambda_{V,W}&=(d_W \otimes \mathrm{id}_{(V\otimes W)^*}) \circ (\mathrm{id}_{W^*} \otimes d_V \otimes \mathrm{id}_{W\otimes (V\otimes W)^*}) \circ (\mathrm{id}_{W^* \otimes V^*} \otimes b_{V\otimes W}), \\
			\lambda_{V,W}^{-1}&= (d_{V\otimes W}\otimes \mathrm{id}_{W^* \otimes V^*} ) \circ (\mathrm{id}_{(V\otimes W)^*\otimes V} \otimes b_W\otimes \mathrm{id}_{V^*}) \circ (\mathrm{id}_{(V\otimes W)^*} \otimes b_V).
\end{align*}
It is straightforward to verify that $\lambda_{V,W} \circ \lambda_{V,W}^{-1}=\id_{(V\otimes W)^*}$ and $\lambda_{V,W}^{-1}\circ \lambda_{V,W}=\id_{W^* \otimes V^*}$. This implies that $(V\otimes W)^* \cong W^* \otimes V^*.$
Next, we give the dual version of the Lie monoid and Lie-Rinehart monoid in $\calC.$
\begin{defi}
A pair $(L, \delta)$ is called a \emph{Lie comonoid}  in $\calC$ if $L\in \ob(\calC)$ and $\delta : L \rightarrow L\otimes L$ is a morphism such that the following conditions are satisfied:
\[\delta= -c_{L,L} \circ \delta \ \mathrm{and} \ (\id_L\otimes \delta) \circ \delta= (\delta \otimes \id_L) \circ \delta+(c_{L,L} \otimes \id_L) \circ (\id_L \otimes \delta) \circ \delta.   \] 
\end{defi}
It can be easily known that if $(L,\delta)$ is a Lie comonoid in $\calC,$ then $\left(L^*, \delta^{\circledast}  \right)$ is a Lie monoid in $\calC$, where $\delta^{\circledast}=\delta^* \circ \lambda_{L,L}$.
Let $(L,\delta)$ and $(L',\delta')$ be two Lie comonoids in $\calC,$ a morphism $f:L\to L'$ is said to be a homomorphism of Lie comonoids if it satisfies 
\[(f\otimes f) \circ \delta =\delta' \circ f.   \]
\begin{defi}
Let $(L,\delta)$ be a Lie comonoid in $\calC$. A pair $(W,\nu )$ is called a (right) $L$-comodule if $\nu: W \to W\otimes L$ is a morphism such that the following condition is satisfied:
\[ (\nu \otimes \id_L) \circ \nu= (\id_W \otimes \delta ) \circ \nu + (\id_W \otimes c_{L,L})\circ (\nu \otimes \id_L) \circ \nu.  \]
\end{defi}
Again, if $(W,\nu)$ is an $L$-comodule, then $(W^*, \nu^{\circledast})$ is an $L^*$-module, where $\nu^{\circledast}=\nu^* \circ \lambda_{W,L}$. 
In particular, $(L,\delta)$ itself is an $L$-comodule. 
Assume that $(W,\nu)$ and $(W',\nu')$ are $L$-comodules. A morphism $f:W \to W'$ is called a homomorphism of $L$-comodules if it satisfies
\[(f \otimes \id_L) \circ \nu = \nu' \circ f.  \]
Composition of homomorphisms of comodules is also a homomorphism. Thus, all $L$-comodules form a category, denoted by {\bf CMod}$_{\calC}(L)$.
Next, we show that the category {\bf CMod}$_{\calC}(L)$ admits a monoidal structure.
		
For $(M,p), (N,q) \in $ {\bf CMod}$_{\calC}(L)$, define a morphism $p \odot q : M\otimes N \to M\otimes N \otimes L$ as follows:
\begin{equation*}
			p \odot q \ = \
			\begin{tikzpicture}[anchorbase]
				\draw[bmod] (0, 0) -- (0, .75) -- (0, .75);
				\draw[bmod] (.5,0) -- (0.5,.5)--(.25,.75);
				\draw[bmod] (0.5,.5)--(.75,.75);
				\btoken{.5, .5}{west}{q};
			\end{tikzpicture} \ + \ 
			\begin{tikzpicture}[anchorbase]
				\draw[bmod] (0.25, 0) -- (0.25, .25) -- (0, .5)-- (0, .75);
				\draw[bmod] (.25,.25) -- (0.5,.5)--(.5,.75);
				\draw[bmod] (.75,0) -- (0.75,.25)-- (0.25,.75);
				\btoken{.25, .25}{west}{p};
			\end{tikzpicture} \ . 
\end{equation*}
It is easily known that $(M \otimes N, p\odot q) \in$ {\bf CMod}$_{\calC}(L)$. Moreover, the pair $(I,\textbf{0})$ also lies in {\bf CMod}$_{\calC}(L)$. Thus, the category {\bf CMod}$_{\calC}(L)$ forms a monoidal category.
		
\begin{defi}
Let $(A,\Delta,\varepsilon)$ be a comonoid in $\calC.$ The pair $(M, \tau)$ is called a (right) $A$-\emph{comodule}, if $\tau: M \to M\otimes A$ is a morphism such that the following conditions are satisfied:
\[(\id_M \otimes \Delta) \circ \tau = (\tau \otimes \id_A) \circ \tau \ \mathrm{and}\ (\id_M \otimes \varepsilon) \circ\tau = \id_M.  \] 
\end{defi}
In particular, if we define  $\Delta^{\circledast}=  \Delta^* \circ \lambda_{A,A},    \tau^{\circledast}=\tau^* \circ \lambda_{M,A}$, then $(A^*, \Delta^{\circledast}, \varepsilon^*)$ forms a monoid, and the pair  $(M^*,\tau^{\circledast})$ is an $A^*$-module. 
		
\begin{defi}
Let $(A, \Delta, \varepsilon)$ be a cocommutative comonoid in $\calC$. Assume that $(L,\delta)$ is a Lie comonoid in $\calC$.  A \emph{Lie-Rinehart comonoid} in $\calC$ is a triple $(A,L,\mathfrak{a})$ together with an $A$-comodule structure on $L$, denoted by $(L,\mathfrak{b})$, and a morphism  $\mathfrak{a}:A \to A \otimes L$ such that the following conditions are satisfied:
\begin{align*}
				(\id_A \otimes \mathfrak{b}) \circ \mathfrak{a} &= (\mathfrak{a} \otimes \id_A) \circ \Delta,\\
				(\Delta \otimes \id_L) \circ \mathfrak{a} &=(\id_A \otimes \mathfrak{a}) \circ \Delta +(\id_A\otimes c_{L,A}) \circ (\mathfrak{a} \otimes \id_A) \circ \Delta,\\
				(\mathfrak{a} \otimes \id_L) \circ \mathfrak{a} &= (\id_A \otimes \delta) \circ \mathfrak{a}+ (\id_A \otimes c_{L,L}) \circ (\mathfrak{a} \otimes \id_L) \circ \mathfrak{a}, \\
				(\mathfrak{b} \otimes \id_L) \circ \delta &= (\id_L \otimes \mathfrak{a}) \circ \mathfrak{b} + (\id_L \otimes c_{L,A}) \circ (\delta \otimes \id_A) \circ \mathfrak{b}.
\end{align*}
\end{defi}
In particular, the triple $(A^*,L^*,\mathfrak{a}^{\circledast})$ is a Lie-Rinehart monoid, where $\mathfrak{a}^{\circledast}=\mathfrak{a}^* \circ \lambda_{A,L},$ and the $A^*$-module structure on $L^*$ is given by $\mathfrak{b}^{\circledast}=\mathfrak{b}^* \circ \lambda_{L,A}$. 
		
\begin{defi}
Let $(A,L,\mathfrak{a})$ be a Lie-Rinehart comonoid in $\calC$. The triple $(M;p,q)$ is called a weak comodule over $(A,L,\mathfrak{a})$ if 
\begin{itemize}
				\item [(1)] $(M,p)$ is an $A$-comodule
				
				\item [(2)] $(M,q)$ is an $L$-comodule 
\end{itemize}
such that 
\begin{equation*}
				\begin{tikzpicture}[anchorbase]
					\draw[bmod] (0.5, 0) -- (0.5, .25) -- (0, .75)-- (0, .75);
					\draw[bmod] (.25,.5) -- (0.5,.75);
					\draw[bmod] (.5,0.25) -- (0.75,.5)-- (0.75,.75);
					\btoken{.25, .5}{east}{p};
					\btoken{.5, .25}{west}{q};
				\end{tikzpicture}\ = \
				\begin{tikzpicture}[anchorbase]
					\draw[bmod] (0.25, 0) -- (0.25, .25) -- (0, .5)-- (0, .75);
					\draw[bmod] (.25,.25) -- (0.75,.75);
					\draw[bmod] (.5,0.5) -- (0.25,.75);
					\btoken{.25, .25}{west}{p};
					\btoken{.5, .5}{west}{\mathfrak{a}};
				\end{tikzpicture} \ + \   
				\begin{tikzpicture}[anchorbase]
					\draw[bmod] (0.5, 0) -- (0.5, .25) -- (0, .75)-- (0, 1);
					\draw[bmod] (.25,.5) -- (0.75,1);
					\draw[bmod] (.5,0.25) -- (0.75,.5)-- (0.75,.75)-- (0.5,1);
					\btoken{.5, .25}{west}{p};
					\btoken{.25, .5}{east}{q};
				\end{tikzpicture} \ . 
\end{equation*}
\end{defi}

Note that $(M^*;p^{\circledast},q^{\circledast})$ is a weak module over the Lie-Rinehart monoid $(A^*,L^*,\mathfrak{a}^{\circledast})$, where $p^{\circledast}=p^* \circ \lambda_{M,A}$ and $q^{\circledast}=q^* \circ \lambda_{M,L}$.
		
A homomorphism of weak comodules over $(A,L,\mathfrak{a})$ is both a homomorphism of $L$-comodules and a homomorphism of $A$-comodules. Moreover, the composition of homomorphisms of weak comodules is also a homomorphism.  Thus, all weak comodules over the Lie-Rinehart comonoid form a category, denoted by {\bf WCMod}$^{\calC}_A(L)$.
		
\begin{defi}
Let $(A,\Delta, \varepsilon, L,\delta, \mathfrak{a})$ be a Lie-Rinehart comonoid and $(\mathcal{L}, \mathfrak{d})$ be a Lie comonoid in $\calC$. A morphism $\mathcal{J}:A\otimes \mathcal{L} \to L$ is called a \emph{cocrossed homomorphism} if it satisfies 
\begin{equation*}
				\begin{tikzpicture}[anchorbase]
					\draw[bmod] (0, 0) -- (0.25, 0.5) -- (0.25, 1)--(0,1.5);
					\draw[bmod] (0.5, 0) -- (0.25, 0.5);
					\draw[bmod] (0.5, 1.5) -- (0.25, 1);
					\btoken{0.25, 1}{west}{\delta};
					\btoken{0.25, 0.5}{west}{\mathcal{J}};
				\end{tikzpicture} \ = \ 
				\begin{tikzpicture}[anchorbase]
					\draw[bmod] (0.25, 0) -- (0.25, .25) -- (0, .5)--(0,1)--(.25,1.25)--(.25,1.5);
					\draw[bmod] (0.25, 0.
					25) --(0.75,.75)-- (.75, 1.5);
					\draw[bmod] (0.5, 0) -- (0.5, 1) -- (0.25, 1.25);
					\btoken{0.25, 1.25}{east}{\mathcal{J}};
					\btoken{0.25, 0.25}{east}{\mathfrak{a}};
				\end{tikzpicture}\ - \ 
				\begin{tikzpicture}[anchorbase]
					\draw[bmod] (0.25, 0) -- (0.25, 0.25) -- (0, 0.5)--(0,.75)--(.5,1.25)--(.5,1.5);
					\draw[bmod] (0.25, 0.25) -- (0.5, 0.5) -- (0.5, .75)--(0,1.25)--(0,1.5);
					\draw[bmod] (0.75, 0) -- (0.75, 1) -- (0.5, 1.25);
					\btoken{0.5, 1.25}{west}{\mathcal{J}};
					\btoken{0.25, 0.25}{east}{\mathfrak{a}};
				\end{tikzpicture} \ + \ 
				\begin{tikzpicture}[anchorbase]
					\draw[bmod] (0, 0) -- (0, 0.25) -- (-0.25, 0.5)--(-0.25,1)--(0,1.25)--(0,1.5);
					\draw[bmod] (0, 0.25) -- (0.75,1.25)--(0.75,1.5);
					\draw[bmod] (0.75, 0.25) -- (1, 0.5) -- (1, 1)--(0.75,1.25);
					\draw[bmod] (0.75, 0) -- (0.75, 0.25) --(0,1.25);
					\btoken{0, 1.25}{east}{\mathcal{J}};
					\btoken{0.75, 1.25}{west}{\mathcal{J}};
					\btoken{0, 0.25}{east}{\Delta};
					\btoken{0.75, 0.25}{west}{\mathcal{D}};
				\end{tikzpicture} \ .
\end{equation*}
\end{defi}
		
\begin{lem}\label{duallem}
Let $(A,\Delta, \varepsilon, L,\delta, \mathfrak{a})$ be a Lie-Rinehart comonoid and $(\mathcal{L}, \mathfrak{d})$ be a Lie comonoid in $\calC$.  Assume that $(M;p,q)$ is a weak comodule over $(A,L,\mathfrak{a})$. For any $\mathcal{L}$-comodule $(V,\psi)$ and cocrossed homomorphism $\mathcal{J}: A \otimes \mathcal{L} \to L$, define 
\begin{equation}\label{boxdot}
				q\vartriangleleft \psi =  \ 
				\begin{tikzpicture}[anchorbase]
					\draw[bmod] (0.5, 0) -- (0.5, 0.25) -- (0, .75)--(0,1.25);
					\draw[bmod] (0.5, .25) -- (1,.75)--(1,1.25);
					\draw[bmod] (1, 0) -- (1, 0.25) -- (.5, 0.75)--(.5,1.25);
					\btoken{0.5, .25}{west}{q};
				\end{tikzpicture} \ + \ 
				\begin{tikzpicture}[anchorbase]
					\draw[bmod] (0.25, 0) -- (0.25, 0.25) -- (0, 0.5)--(0,1.25);
					\draw[bmod] (0.25, 0.25) -- (1,1)--(1,1.25);
					\draw[bmod] (1, 0) -- (1, 0.25) -- (.5, 0.75)--(.5,1.25);
					\draw[bmod] (1, 0.25) -- (1.25, 0.5) --(1.25,.75)--(1,1);
					\btoken{1, .25}{west}{\psi};
					\btoken{0.25, .25}{east}{p};
					\btoken{1, 1}{west}{\mathcal{J}};
				\end{tikzpicture} \ \ \ \ \text{and} \ \ \ \ 
				p \vartriangleleft \psi  = \ 
				\begin{tikzpicture}[anchorbase]
					\draw[bmod] (0.5, 0) -- (0.5, 0.25) -- (0, .75)--(0,1.25);
					\draw[bmod] (0.5, .25) -- (1,.75)--(1,1.25);
					\draw[bmod] (1, 0) -- (1, 0.25) -- (.5, 0.75)--(.5,1.25);
					\btoken{0.5, .25}{east}{p};
				\end{tikzpicture}  \ .
\end{equation}
Then, the triple $(M\otimes V; p \vartriangleleft \psi, q \vartriangleleft \psi)$ forms a weak comodule over $(A,L,\mathfrak{a})$. 
\end{lem}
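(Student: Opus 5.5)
The plan is to dualize the proof of Lemma~\ref{main1} step by step, reading every string diagram upside down. Cross-checking by passing to $(-)^{\circledast}$ is attractive but fragile: the $\mathcal L$-comodule $V$ would become $V^*$ and the cocrossed homomorphism $\mathcal J$ would need to dualize to a crossed homomorphism $L^*\to\mathfrak L'\otimes A^*$ with reversed tensor orders via $\lambda$. I would use it only as a sanity check and carry out a direct verification of three statements.

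\textbf{Step 1: $p\vartriangleleft\psi$ is an $A$-coaction.} The morphism $p\vartriangleleft\psi$ is $(\id_M\otimes c_{A,V})\circ(p\otimes\id_V)$. Coassociativity and counitality follow from those of $p$, together with naturality of the braiding and the hexagon identities \eqref{braid1}--\eqref{braid2}. This is the comodule analogue of Lemma~\ref{tensorofamod}.

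\textbf{Step 2: $q\vartriangleleft\psi$ is an $L$-comodule.} I expand both sides of the comodule identity
\[
(\nu\otimes\id_L)\circ\nu=(\id\otimes\delta)\circ\nu+(\id\otimes c_{L,L})\circ(\nu\otimes\id_L)\circ\nu
\]
with $\nu=q\vartriangleleft\psi$. Writing $\nu=\nu_1+\nu_2$, where $\nu_1$ is the $q$-term and $\nu_2$ the $(p,\psi,\mathcal J)$-term, each side splits into four pieces, mirroring \eqref{fig:ten items}. The terms are then grouped as follows.
\begin{itemize}
\item The $\nu_1\nu_1$ terms cancel against $(\id\otimes\delta)\circ\nu_1$ by the $L$-comodule axiom of $q$.
\item The mixed terms $\nu_1\nu_2$ and $\nu_2\nu_1$ are rewritten using the weak comodule compatibility of $(M;p,q)$. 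This produces extra terms containing $\mathfrak a$. Naturality of the braiding matches all but those $\mathfrak a$-terms with their counterparts on the right.
\item The $\nu_2\nu_2$ terms are rewritten using coassociativity of $p$ and cocommutativity of $A$, the dual of \eqref{fig:ten}. They are combined with the $\mathcal L$-comodule identity for $\psi$, which introduces $\mathfrak d$, the dual of \eqref{fig:four-ten}.
\end{itemize}
The leftover terms, namely $(\id\otimes\delta)\circ\nu_2$, the $\mathfrak a$-terms, and the $\mathfrak d$-term, then assemble exactly into the cocrossed homomorphism identity for $\mathcal J$. That identity closes the computation.

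\textbf{Step 3: the weak comodule compatibility for $(M\otimes V;p\vartriangleleft\psi,q\vartriangleleft\psi)$.}
\begin{itemize}
\item The $\nu_1$ part follows from the compatibility of $(M;p,q)$ after sliding the $V$-strand across with braiding naturality.
\item The $\nu_2$ part must match the extra term from applying $p\vartriangleleft\psi$ after $\nu_2$. This is the dual of \eqref{compatible222}: use naturality of $c_{A,V}$, the identity of Lemma~\ref{identityofbraid}, coassociativity of $p$, and cocommutativity of $\Delta$.
\end{itemize}

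The main obstacle is Step 2. There, the braiding bookkeeping must be handled carefully so that the $\mathfrak a$-terms appear with the exact crossings and signs of the two middle terms in the cocrossed axiom. I would first fix the precise reading of the cocrossed diagram; note that the label $\mathcal D$ there should be $\psi$-independent, namely $\mathfrak d$. I would then reduce every term to a normal form in which $p$ and $\psi$ act first, followed by a single block of braidings, then $\mathcal J$. Comparing coefficients in this normal form should make the reduction to the cocrossed identity mechanical.
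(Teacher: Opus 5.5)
Your proposal is correct and is essentially the paper's argument: the paper's proof only says the result is the dual of Lemma~\ref{main1}, and your three steps carry out that dualization. The term matching in each step checks out: the $A$-coaction follows from coassociativity of $p$ and the hexagon identity; the $L$-comodule axiom follows once you group the $qq$, mixed, and $(p,\psi,\mathcal{J})$-terms, leaving exactly the cocrossed identity; and the compatibility follows from the weak comodule condition together with cocommutativity of $\Delta$. You are also right that the label $\mathcal{D}$ in the cocrossed axiom must be read as $\mathfrak{d}$.
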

\begin{proof}
The proof is similar to Lemma \ref{main1}. 
\end{proof}
		
With the preparations above, we are now in a position to present the dual version of Theorem \ref{maintheorem}. 
		
\begin{theor}\label{rightmodule}
Let $(A,L,\mathfrak{a})$ be a Lie-Rinehart comonoid in $\calC$. For any Lie comonoid $(\mathcal{L}, \mathfrak{d})$ in $\calC$ and cocrossed homomorphism $\mathcal{J}:A\otimes \mathcal{L} \to L$, the category {\bf WCMod}$^{\calC}_A(L)$ is a strict right module category over the monoidal category {\bf CMod}$_{\calC}(\mathcal{L})$. Specifically, for any $(W,\nu), (W',\nu') \in$ {\bf CMod}$_{\calC}(\mathcal{L})$ and $(M;p,q), (M';p',q')\in$ {\bf WCMod}$^{\calC}_A(L)$, we have a bifunctor 
\begin{align*}
				\mathcal{F}_{\mathcal{J}}:   \textbf{WCMod}^{\calC}_A(L) \times \textbf{CMod}_{\calC}(\mathcal{L}) &\to  \textbf{WCMod}^{\calC}_A(L) \\
				\Big( (M;p,q), (W,\nu) \Big) & \mapsto (M\otimes W; p \vartriangleleft \nu, q \vartriangleleft \nu )\\
				(\varphi, \phi) & \mapsto \varphi \otimes \phi,
\end{align*}
where $\varphi: W \to W'$ is a homomorphism of $\mathcal{L}$-comodules, and $\phi:M\to M'$ is a homomorphism of weak comodules of $(A,L)$. 
\end{theor}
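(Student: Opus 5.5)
The plan mirrors the proof of Theorem~\ref{maintheorem}, reading every diagram upside down. There are four steps: well-definedness on objects, well-definedness on morphisms, the strict associativity identities, and the unit.

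\emph{Objects.} For $(M;p,q)\in{\bf WCMod}^{\calC}_A(L)$ and $(W,\nu)\in{\bf CMod}_{\calC}(\mathcal L)$, Lemma~\ref{duallem} already says that $(M\otimes W;p\vartriangleleft\nu,q\vartriangleleft\nu)$ is a weak comodule. So $\mathcal F_{\mathcal J}$ is well defined on objects.

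\emph{Morphisms.} Let $\phi:M\to M'$ be a homomorphism of weak comodules and $\varphi:W\to W'$ a homomorphism of $\mathcal L$-comodules. We must check two identities:
\[
((\phi\otimes\varphi)\otimes\id_A)\circ(p\vartriangleleft\nu)=(p'\vartriangleleft\nu')\circ(\phi\otimes\varphi),
\]
\[
((\phi\otimes\varphi)\otimes\id_L)\circ(q\vartriangleleft\nu)=(q'\vartriangleleft\nu')\circ(\phi\otimes\varphi).
\]
The first follows from naturality of $c_{A,W}$ with respect to $\varphi$, together with the fact that $\phi$ is an $A$-comodule map. For the second, split $q\vartriangleleft\nu$ into its two summands. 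The first summand is handled as before, now using that $\phi$ is an $L$-comodule map and naturality of $c_{L,W}$. For the second summand (the one containing $p$, $\nu$ and $\mathcal J$), slide $\phi$ through $p$ using the $A$-comodule map property and slide $\varphi$ through $\nu$ using the $\mathcal L$-comodule map property. Then use naturality of the braiding that moves the $A$-strand past $W$ to bring the picture to the form of $q'\vartriangleleft\nu'$. This is the exact dual of \eqref{tensorofhom}.

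\emph{Associativity.} Write $\nu\odot\nu'$ for the coaction on $W\otimes W'$. We must show
\[
p\vartriangleleft(\nu\odot\nu')=(p\vartriangleleft\nu)\vartriangleleft\nu',
\qquad
q\vartriangleleft(\nu\odot\nu')=(q\vartriangleleft\nu)\vartriangleleft\nu'.
\]
The first identity is immediate: both sides equal $p$ followed by moving the $A$-strand past $W\otimes W'$, and \eqref{braid1}/\eqref{braid2} show the two braidings agree.

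For the second identity, expand the right-hand side. It has three terms: the pure $q$-term, the term with $\nu$ and $\mathcal J$, and a term in which $\nu'$ is fed together with the $A$-output of $p\vartriangleleft\nu$ into $\mathcal J$. The left-hand side has the $q$-term plus a $\mathcal J$-term evaluated on $\nu\odot\nu'$. By definition of $\odot$, that $\mathcal J$-term splits into two pieces, one involving $\nu'$ and one involving $\nu$. I would match these pieces term by term, using naturality of braidings and Lemma~\ref{identityofbraid} to reposition the $A$-strand produced by $p$.

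I expect this matching of the $\mathcal J$-terms to be the main obstacle. The braidings must be arranged so that the $A$-leg from $p$ crosses $W$ in the same way on both sides. The cocrossed condition should not be needed here; it is used only in Lemma~\ref{duallem}. If a mismatch appears, I would first try the coassociativity-type identity $(p\otimes\id_A)\circ p=(\id_M\otimes\Delta)\circ p$.

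\emph{Unit.} For the unit object $(I,\mathbf 0)$, both coactions reduce to $p$ and $q$ themselves, because the extra $\mathcal J$-term vanishes when $\nu=\mathbf 0$. Hence the right unit constraint is the identity, which gives the strict right module category structure.
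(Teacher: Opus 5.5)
Your proposal is correct and follows the route the paper indicates when it says the proof is ``similar to Theorem~\ref{maintheorem}'': Lemma~\ref{duallem} for objects, naturality of the braiding together with the comodule-map properties for morphisms, the hexagon identities for strict associativity, and vanishing of the $\mathcal J$-term at $(I,\mathbf 0)$ for the unit. You are also right that the cocrossed condition is not needed for associativity, and the coassociativity fallback is unnecessary: the $\nu$-part of $q\vartriangleleft(\nu\odot\nu')$ and the term of $(q\vartriangleleft\nu)\vartriangleleft\nu'$ in which the output of $\mathcal J$ is braided past $W'$ agree by naturality of $c_{-,W'}$ with respect to $\mathcal J$, followed by \eqref{braid2} and the interchange law.
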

\begin{proof}
The proof is similar to Theorem \ref{maintheorem}.
\end{proof}
		
Coupled with Lie-Rinehart monoids and Lie-Rinehart comonoids, we now introduce the concept of Lie-Rinehart bimonoids. 
\begin{defi}
The tuple $(A,L,\alpha,\mathfrak{a})$ is called a  \emph{Lie-Rinehart double monoid} if $(A,L,\alpha)$ is a Lie-Rinehart monoid and $(A,L,\mathfrak{a})$ is a Lie-Rinehart comonoid.
\end{defi}
The tuple $(M;\rho,\gamma;p,q)$ is called a \emph{weak double module} over the Lie-Rinehart double monoid if $(M;\rho,\gamma)\in \textbf{WMod}^{\calC}_A(L)$ and $(M;p,q)\in \textbf{WCMod}^{\calC}_A(L)$.   A homomorphism of weak double modules over the Lie-Rinehart double monoid  $(A,L,\alpha,\mathfrak{a})$ is both a homomorphism of weak modules and a homomorphism of weak comodules. Clearly, all weak double modules over a Lie-Rinehart double monoid form a category, denoted by $\textbf{WDMod}^{\calC}_A(L)$. 
Now we arrive at the main result of this subsection. 
\begin{theor}
Let $(A,L,\alpha,\mathfrak{a})$ be a Lie-Rinehart double monoid. Assume that $(\mathfrak{L},\mathfrak{u})$ is a Lie monoid and $(\mathcal{L},\mathfrak{d})$ is a Lie comonoid. For any crossed homomorphism $\mathcal{H}: L \rightarrow \mathfrak{L}\otimes A$, and cocrossed homomorphism $\mathcal{J}:A\otimes \mathcal{L} \to L$, the category $\textbf{WDMod}^{\calC}_A(L)$ is a strict  bimodule category over  $\Big(\mathrm{ \bf Mod}_{\calC}(\mathfrak{L}), \mathrm{ \bf CMod}_{\calC}(\mathcal{L})\Big)$. 
\end{theor}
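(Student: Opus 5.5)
We need to show that $\textbf{WDMod}^{\calC}_A(L)$ is a strict bimodule category over $\big(\mathrm{\bf Mod}_{\calC}(\mathfrak L),\mathrm{\bf CMod}_{\calC}(\mathcal L)\big)$. The plan is to reduce as much as possible to Theorem~\ref{maintheorem} and Theorem~\ref{rightmodule}, and then check the one genuinely new ingredient: the two actions are compatible with each other and with the double structure.

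\textbf{Defining the actions.} For $(V,\psi)\in\mathrm{\bf Mod}_{\calC}(\mathfrak L)$ and $(M;\rho,\gamma;p,q)\in\textbf{WDMod}^{\calC}_A(L)$, define the left action by
\[V\vartriangleright M=(V\otimes M;\psi\vartriangleright\rho,\psi\vartriangleright\gamma;\,p_V,q_V),\]
where $p_V,q_V$ are the coactions on $V\otimes M$ obtained by letting $V$ be inert: apply $\id_V\otimes p$ or $\id_V\otimes q$. Dually, for $(W,\nu)\in\mathrm{\bf CMod}_{\calC}(\mathcal L)$, define the right action by
\[M\vartriangleleft W=(M\otimes W;\,\rho^W,\gamma^W;\,p\vartriangleleft\nu,q\vartriangleleft\nu),\]
where $\rho^W,\gamma^W$ act on $M$ alone, with the braiding used to pass the $A$ and $L$ strands past nothing, since $M$ is on the left.

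\textbf{Well-definedness.} Lemma~\ref{main1} already gives the weak-module half of $V\vartriangleright M$, and Lemma~\ref{duallem} gives the weak-comodule half of $M\vartriangleleft W$. The remaining halves are easy. A weak comodule $(M;p,q)$ tensored on the left by an inert object $V$ stays a weak comodule: each axiom is just $\id_V\otimes(\text{axiom for }M)$. The same holds for a weak module tensored on the right by an inert $W$. Homomorphisms $\varphi\otimes\phi$ are handled by the morphism computations in the proofs of Theorems~\ref{maintheorem} and~\ref{rightmodule}. The extra check that $\varphi\otimes\phi$ respects the inert structures is immediate from naturality.

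\textbf{Module category structures.} Strictness of the left action $a^{\mathcal M}$ and $l^{\mathcal M}$ comes from Theorem~\ref{maintheorem} on the module part. On the comodule part it is trivial, because $V\otimes V'$ is inert. Symmetrically, strictness of the right action $b^{\mathcal M}$ and $r^{\mathcal M}$ comes from Theorem~\ref{rightmodule}.

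\textbf{The main obstacle: identity middle constraint.} It remains to show that $d^{\mathcal M}$ can be taken to be the identity, i.e.
\[(V\vartriangleright M)\vartriangleleft W=V\vartriangleright(M\vartriangleleft W)\]
as objects of $\textbf{WDMod}^{\calC}_A(L)$. Both sides have underlying object $V\otimes M\otimes W$. I will compare the four structure maps separately.

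For the $A$-action, both sides give $\rho$ applied to the middle strand after braiding $A$ past $V$. For the $L$-action, both sides give $\id_V\otimes\gamma\otimes\id_W$ plus the $\mathcal H$-term $\psi\otimes\rho\otimes\id_W$. These agree by naturality of the braiding $c_{L,V}$ and $c_{A,V}$, using the strict associativity of $\calC$. The coactions $p$ and $q\vartriangleleft\nu$ are dual: the $\mathcal J$-term involves only $M\otimes W$, and the inert $V$ on the left sits outside it. This uses naturality of $c$ with respect to $\nu$ and $\mathcal J$.

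Once the structure maps coincide, $d^{\mathcal M}=\id$ is a homomorphism, and both pentagon-type coherence diagrams commute trivially since every arrow is an identity. The only delicate point is the bookkeeping of braidings when the $\mathcal H$-term and the $\mathcal J$-term appear simultaneously. I expect this to be resolved by Lemma~\ref{identityofbraid} together with naturality, in the style of the computations in \eqref{fig:nine b}.
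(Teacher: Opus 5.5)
Your proposal is correct and follows essentially the same route as the paper. It uses the same two actions ($V$ inert on the comodule data $p,q$; $W$ inert on the module data $\rho,\gamma$), reduces each one-sided structure to Theorems~\ref{maintheorem} and~\ref{rightmodule}, and checks componentwise that $(V\vartriangleright M)\vartriangleleft W=V\vartriangleright(M\vartriangleleft W)$, which the paper reads off directly from \eqref{newhom} and \eqref{boxdot}. The ``delicate point'' you anticipate does not arise: the $\mathcal H$-term appears only in the $L$-action and the $\mathcal J$-term only in the $L$-coaction, so each of the four comparisons is a literal equality by functoriality of $\otimes$, with no braiding naturality or Lemma~\ref{identityofbraid} needed.
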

		
\begin{proof}
By Theorem \ref{maintheorem} and Theorem \ref{rightmodule}, we obtain that $\textbf{WDMod}^{\calC}_A(L)$ is a left module category over $\mathrm{\bf Mod}^{\calC}_A(L)$ and right module category over $\mathrm{\bf CMod}^{\calC}_A(L)$. The actions are given by 
\begin{align*}
				\mathrm{\bf Mod}_{\calC}( \mathfrak{L}) \times \mathrm{\bf WDMod}^{\calC}_A(L)& \to \mathrm{\bf WDMod}^{\calC}_A(L)\\
				\Big((V,\psi), (M;\rho,\gamma;p,q)      \Big)& \mapsto \Big(V\otimes M; \psi \vartriangleright \rho, \psi \vartriangleright \gamma; \id_V\otimes p, \id_V \otimes q\Big)
\end{align*}
and
\begin{align*}
				\mathrm{\bf WDMod}^{\calC}_A(L) \times \mathrm{\bf CMod}_{\calC}(\mathcal{L}) &\to \mathrm{\bf WDMod}^{\calC}_A(L)\\
				\Big((M;\rho,\gamma;p,q), (W,\nu)   \Big) & \mapsto \Big(M\otimes W; \rho \otimes \id_W, \gamma\otimes \id_W;p\vartriangleleft \nu, q \vartriangleleft \nu \Big).
\end{align*}
To show that the category $\mathrm{\bf WDMod}^{\calC}_A(L)$ is a strict bimodule category, it suffices to prove 
\begin{align*}
				\Big( (\psi\vartriangleright \rho) \otimes \id_W, (\psi \vartriangleright \gamma)\otimes \id_W&, (\id_V\otimes p ) \vartriangleleft \nu, (\id_V \otimes q) \vartriangleleft \nu    \Big) \\
				&=\\
				\Big( \psi \vartriangleright (\rho \otimes \id_W), \psi \vartriangleright (\gamma \otimes \id_W)&,  \id_V\otimes(p \vartriangleleft \nu), \id_V\otimes (q\vartriangleleft \nu)    \Big).
\end{align*} 
However, this equation is deduced from equations \eqref{newhom} and \eqref{boxdot}. 
\end{proof}

\section{Applications}\label{app}
In this section, we work with the symmetric monoidal category $s\mathcal{V}ec(\bbC)$. Under this assumption, the notions of monoids, commutative monoids, Lie monoids and Lie-Rinehart monoids specialize respectively to  unital associative superalgebras,  unital commutative associative superalgebras,  Lie superalgebras and Lie-Rinehart superalgebras over $\bbC$.
Recall that a unital associative algebra $A$ is called a superalgebra if it is a $\bbZ_2$-graded vector space such that $A_i A_j \subset A_{i+j}$ for all $i,j \in \bbZ_2$. A vector space $M$ is called an $A$-module, if $M$ is a $\bbZ_2$-graded vector space such that $A_i \cdot M_j \subset M_{i+j}$. Let $M$ and $M'$ be two $A$-modules. A map $f:M\to M'$ is called a homomorphism of $A$-modules if $f$ is even and $f(a \cdot m)=a\cdot f(m)$ for all $a\in A,m\in M.$
A superalgebra $A$ is called supercommutative if $ab=(-1)^{\mid a \mid \mid b\mid}ba, \ \forall a,b \in A$. For any superalgebra $A$, a linear map $f:A \to A$ of degree $\mid f\mid$ is a \emph{superderivation} provided that
\[f(ab)=f(a)b +(-1)^{\mid a \mid \mid f \mid } af(b)  \]
for all $a,b\in A.$ We denote by $s\mathrm{Der}(A)_i$ the set of all superderivations of degree $i$ on $A$. 
Clearly, the space $s\mathrm{Der}(A)=s\mathrm{Der}(A)_0 \oplus s\mathrm{Der}(A)_1$ admits a natural Lie superalgebra  structure with bracket defined by 
\[[f,g]:=f \circ g -(-1)^{\mid f \mid \mid g \mid }g \circ f, \ \forall f,g\in s\mathrm{Der}(A).\]
In what follows, we assume that $A$ is a supercommutative superalgebra. 
In this case, the Lie superalgebra $s\mathrm{Der}(A)$ becomes an $A$-module via the action 
\[(a\cdot f)(b):=af(b), \  \forall f\in s\mathrm{Der}(A), a,b\in A.\]
		
\begin{defi}[Chapter $2.1$, \cite{Che95}]
Let $(L, [,])$ be a Lie superalgebra. A \emph{Lie-Rinehart superalgebra} is a triple $(A,L,\tilde{\alpha})$ equipped with an $A$-module structure on $L$, and an even map $\tilde{\alpha}: L \to s\mathrm{Der}(A)$ which is simultaneously a homomorphism of Lie superalgebras and a homomorphism of $A$-modules such that 
\begin{align}\label{lierinehartalgebra}
[x, a \cdot y] =(-1)^{\mid a \mid \mid x \mid}a \cdot [x,y] + \tilde{\alpha}(x)(a) \cdot y, \ \forall x,y,\in L, \ a\in A.   
\end{align}
\end{defi}
\begin{exam}
The triple $(A, s\mathrm{Der}(A),\id_{s\mathrm{Der}(A)} )$ forms a Lie-Rinehart superalgebra. 
\end{exam}
\begin{exam}
Let $M$ be an $A$-module. A first-order differential operator on $M$ is a pair $(D, \sigma)$, where $D:M\to M$ is a linear map of order $| D|$, and $\sigma:=\sigma_D \in s\mathrm{Der}(A)$ satisfying
\[D(a\cdot m)=(-1)^{\mid a \mid \mid D \mid } a \cdot D(m) + \sigma(a) \cdot m,   \]
for all $a\in A,m\in M.$ Denote by $\mathcal{D}(M)_i=\{(D,\sigma) \mid  \ | D | = | \sigma | =i \}$. Then $\mathcal{D}(M):= \mathcal{D}(M)_0 \oplus \mathcal{D}(M)_1$ forms a Lie superalgebra if we define
\[[(D_1,\sigma_1), (D_2,\sigma_2) ]:= \Big(D_1 \circ D_2 - (-1)^{\mid D_1 \mid \mid D_2 \mid } D_2 \circ D_1, \sigma_1 \circ \sigma_2 -(-1)^{\mid \sigma_1 \mid \mid \sigma_2 \mid} \sigma_2 \sigma_1 \Big),     \]
where $(D_1,\sigma_1), (D_2,\sigma_2) \in \mathcal{D}(M).$ Define a map 
\begin{align*}
\mathrm{pr}: \mathcal{D}(M) &\to s\mathrm{Der}(A) \\
(D,\sigma) &\mapsto \sigma.
\end{align*}
Then, the triple $(A, \mathcal{D}(M), \mathrm{pr})$ is a Lie-Rinehart superalgebra. 
\end{exam}

We now establish the relationship between our definition of the Lie-Rinehart monoid in the monoidal category $s\mathcal{V}ec(\bbC)$ and the classical notion of the Lie-Rinehart superalgebra. 
\begin{lem}\label{liemonoid}
Lie-Rinehart monoids in $s\mathcal{V}ec(\bbC)$ are precisely Lie-Rinehart superalgebras.
\end{lem}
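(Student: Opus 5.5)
The plan is to construct the dictionary in both directions explicitly and then check that the axioms match term by term under the Koszul sign rule. Start from a Lie-Rinehart monoid $(A,m,\eta,L,\mu,\alpha,\beta)$ in $s\mathcal{V}ec(\bbC)$. Every structure morphism is even and $\mathbb{C}$-linear. So $(A,m,\eta)$ is a unital associative superalgebra, which we assume commutative as a monoid in the symmetric category; since the braiding is $a\otimes b\mapsto(-1)^{|a||b|}b\otimes a$, the identity $m\circ c_{A,A}=m$ is exactly supercommutativity. Similarly, $(L,\mu)$ is a Lie monoid, and antisymmetry and Jacobi with the braiding inserted become super-antisymmetry and the super Jacobi identity. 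Hence $L$ is a Lie superalgebra, and $\beta$ makes $L$ a graded $A$-module. Define $\tilde\alpha:L\to \mathrm{End}(A)$ by $\tilde\alpha(x)(a)=\alpha(x\otimes a)$. This map is even because $\alpha$ is even.

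Next I would translate each axiom by evaluating the string diagrams on homogeneous elements and tracking the signs produced by each crossing. Applied to $x\otimes a\otimes b$, axiom \eqref{derivation} reads $\tilde\alpha(x)(ab)=\tilde\alpha(x)(a)b+(-1)^{|x||a|}a\tilde\alpha(x)(b)$. The sign comes from the crossing of $x$ past $a$ in the second diagram. This says $\tilde\alpha(x)\in s\mathrm{Der}(A)_{|x|}$, so $\tilde\alpha$ lands in $s\mathrm{Der}(A)$. Axiom \eqref{homofa} gives $\tilde\alpha(a\cdot x)=a\cdot\tilde\alpha(x)$, so $\tilde\alpha$ is $A$-linear. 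Axiom \eqref{liehom} gives $\tilde\alpha(x)\tilde\alpha(y)(a)=\tilde\alpha([x,y])(a)+(-1)^{|x||y|}\tilde\alpha(y)\tilde\alpha(x)(a)$, which says $\tilde\alpha$ is a Lie superalgebra homomorphism. Finally, \eqref{compatible1} on $x\otimes a\otimes y$ gives $[x,a\cdot y]=\tilde\alpha(x)(a)\cdot y+(-1)^{|x||a|}a\cdot[x,y]$, which is \eqref{lierinehartalgebra}.

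For the converse, take a Lie-Rinehart superalgebra $(A,L,\tilde\alpha)$ and set $\alpha(x\otimes a)=\tilde\alpha(x)(a)$, $\mu(x\otimes y)=[x,y]$, and let $\beta$ be the module action. Because the sign computations above are equivalences, reading them backwards shows that every diagrammatic axiom holds. The two assignments are mutually inverse, since a morphism out of $L\otimes A$ is the same thing as an even map $L\to\mathrm{Hom}(A,A)$. This gives the claimed bijection. If needed, it extends to morphisms: the anchor-compatibility diagram becomes $\tilde\alpha'\circ f=\tilde\alpha$.

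I expect the main obstacle to be bookkeeping the signs correctly from the diagrams. In particular, we must confirm that the crossing in the last diagram of \eqref{liehom} and \eqref{compatible1} yields exactly the factors $(-1)^{|x||y|}$ and $(-1)^{|x||a|}$, and that the convention $(f\otimes g)(v\otimes w)=(-1)^{|g||v|}f(v)\otimes g(w)$ introduces no extra signs. It does not, because all structure maps are even. I would handle this by writing out explicitly which strands cross in each diagram before evaluating.
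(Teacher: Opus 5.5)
Your proposal is correct and follows the same route as the paper. Both define $\tilde\alpha(x)(a)=\alpha(x\otimes a)$ and read \eqref{derivation}, \eqref{homofa}, \eqref{liehom}, \eqref{compatible1} on homogeneous elements as, respectively, the superderivation property, $A$-linearity, the Lie superalgebra homomorphism property, and \eqref{lierinehartalgebra}. Your explicit sign checks and your converse go beyond the paper: the converse reads the equivalences backwards, using that even maps $L\otimes A\to A$ correspond to even maps $L\to\mathrm{End}(A)$, whereas the paper writes out only the forward implication despite the word ``precisely''.
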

\begin{proof}
Let $(A,L,\alpha)$ be a Lie-Rinehart monoid in $s\mathcal{V}ec(\bbC)$. Define an even map $\tilde{\alpha}:L \rightarrow \mathrm{End}(A)$ by 
\[\tilde{\alpha}(x)(a):= \alpha( x\otimes a), \ \forall x\in L,a\in A. \]
By equation \eqref{derivation}, we have $\tilde{\alpha}(x) \in s\mathrm{Der}(A)_{|x|}$.  
Equations \eqref{homofa} and \eqref{liehom} imply that $\tilde{\alpha}$ is a homomorphism of $A$-modules and Lie superalgebras, respectively. The condition \eqref{lierinehartalgebra} holds due to the equation \eqref{compatible1}. Thus, $(A,L,\tilde{\alpha})$ forms a Lie-Rinehart superalgebra. 
\end{proof}
		
Applying Theorem \ref{maintheorem} to the case where $\calC=s\mathcal{V}ec(\bbC)$,  we obtain the following result. 
\begin{theor}\label{superver}
Let $(A,L,\alpha)$ be a Lie-Rinehart superalgebra and $\mathfrak{L}$ be a Lie superalgebra. Then any even crossed homomorphism $\mathcal{H}:L\rightarrow \mathfrak{L}\otimes A$ induces a left module category structure of the category of weak representations ${\bf WMod}_A(L)$ over the monoidal category ${\bf Mod}(\mathfrak{L})$:
\[F_{\mathcal{H}}: {\bf Mod}(\mathfrak{L}) \times {\bf WMod}_A(L) \rightarrow {\bf WMod}_A(L).   \]
\end{theor}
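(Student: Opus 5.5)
The plan is to deduce this directly from Theorem~\ref{maintheorem} by specializing $\calC=s\mathcal{V}ec(\bbC)$. The only work is to identify each categorical object with its super counterpart, so that the hypotheses and conclusion of Theorem~\ref{maintheorem} become exactly those of the statement.

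First, I will invoke Lemma~\ref{liemonoid}: the Lie-Rinehart superalgebra $(A,L,\alpha)$ is a Lie-Rinehart monoid in $s\mathcal{V}ec(\bbC)$. The anchor is $\alpha(x\otimes a)=\tilde\alpha(x)(a)$, which is an even map $L\otimes A\to A$. Next I will note the following identifications, each obtained by writing the string diagrams of Section~\ref{pre} with the Koszul braiding.
\begin{itemize}
\item A Lie superalgebra $\mathfrak{L}$ is a Lie monoid in $s\mathcal{V}ec(\bbC)$.
\item ${\bf Mod}(\mathfrak{L})$ is ${\bf Mod}_{s\mathcal{V}ec(\bbC)}(\mathfrak{L})$. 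Its morphisms are even module maps, and its tensor product is the diagonal action with the sign $(-1)^{|x||v|}$, which is exactly $\boxplus$.
\item Weak representations, meaning an $A$-module and an $L$-module satisfying the super Leibniz rule, are precisely the objects of ${\bf WMod}^{s\mathcal{V}ec(\bbC)}_A(L)$. Condition \eqref{compatible2} reads $x\cdot(a\cdot m)=\tilde\alpha(x)(a)\cdot m+(-1)^{|x||a|}a\cdot(x\cdot m)$.
\end{itemize}

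The key check is that an even crossed homomorphism $\mathcal{H}:L\to\mathfrak{L}\otimes A$, in the usual super sense, is a morphism in $s\mathcal{V}ec(\bbC)$ satisfying the diagrammatic crossed-homomorphism axiom. Evenness is needed precisely so that $\mathcal{H}$ is a morphism of the category. Expanding the diagram on homogeneous $x,y$ with $\mathcal{H}(x)=\sum \xi_i\otimes a_i$ and $\mathcal{H}(y)=\sum\zeta_j\otimes b_j$ gives
\[
\mathcal{H}([x,y])=x\cdot\mathcal{H}(y)-(-1)^{|x||y|}y\cdot\mathcal{H}(x)+\sum_{i,j}(-1)^{|a_i||\zeta_j|}[\xi_i,\zeta_j]\otimes a_ib_j.
\]
Here $L$ acts on $\mathfrak{L}\otimes A$ through $\alpha$ on the $A$-factor, with the Koszul sign from passing $x$ across $\mathfrak{L}$. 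I will take this as the definition of a crossed homomorphism of Lie superalgebras, matching \cite{PSTZ23} in the even case. I expect this sign bookkeeping to be the main obstacle: the braidings $c_{L,\mathfrak{L}}$ and $c_{A,L}$ in the diagram must produce exactly the signs of the super definition, and this has to be checked term by term.

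With the identifications in place, Theorem~\ref{maintheorem} gives the bifunctor $F_{\mathcal{H}}=\mathcal{F}_{\mathcal{H}}$ making ${\bf WMod}_A(L)$ a strict left module category over ${\bf Mod}(\mathfrak{L})$. I will write out the action explicitly as a consistency check. Using \eqref{newhom}, on $V\otimes M$ one has $a\cdot(v\otimes m)=(-1)^{|a||v|}v\otimes a\cdot m$. With $\mathcal{H}(x)=\sum\xi_i\otimes a_i$, the $L$-action is
\[
x\cdot(v\otimes m)=(-1)^{|x||v|}v\otimes x\cdot m+\sum_i(-1)^{|a_i||v|}\,\xi_i\cdot v\otimes a_i\cdot m,
\]
which recovers the super Shen-Larsson construction.
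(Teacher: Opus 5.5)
Your proposal is correct and follows the paper's route. The paper obtains this theorem simply by specializing Theorem~\ref{maintheorem} to $\calC=s\mathcal{V}ec(\bbC)$, using Lemma~\ref{liemonoid} to identify Lie-Rinehart superalgebras with Lie-Rinehart monoids, and your explicit sign computations for the crossed homomorphism axiom, condition \eqref{compatible2} and the action \eqref{newhom} all agree with the string diagrams. One small remark: the written proof of Lemma~\ref{liemonoid} only checks the monoid-to-superalgebra direction, whereas you need the converse, which is the same routine term-by-term verification of \eqref{derivation}--\eqref{compatible1} read in reverse.
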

		
\begin{rmk}
When a Lie-Rinehart superalgebra is specialized to an ordinary Lie-Rinehart algebra, we recover   \cite[Theorem $3.26$]{PSTZ23}.
\end{rmk}
		
As noted in \cite{PSTZ23}, it is generally difficult to construct non-trivial crossed homomorphisms for a given Lie-Rinehart superalgebra. In what follows, we present several examples of crossed homomorphisms for Lie-Rinehart superalgebras in some special cases.
In particular, when the Lie superalgebra $\mathfrak{L}$ is taken to be  trivial, any crossed homomorphism $\mathcal{H}: L \to A$ reduces to an element of $\mathrm{sDer}(L,A)_0$. 
For $L=s\mathrm{Der}(A),$ we have the following result.
\begin{coro}
Any $\mathcal{H} \in s\mathrm{Der}(s\mathrm{Der}(A),A)_0$ induces a functor 
\begin{align*}
\mathcal{F}_{\mathcal{H}}: {\bf WMod}_A(s\mathrm{Der}(A)) &\to {\bf WMod}_A(s\mathrm{Der}(A))\\
(M;\rho,\gamma) & \mapsto \Big(M;\rho, \rho \circ (\mathcal{H}\otimes \id_A) +\gamma \Big).
\end{align*} 
\end{coro}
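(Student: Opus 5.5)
This is a direct specialization of Corollary~\ref{newfunctor} together with the endofunctor construction following Theorem~\ref{maintheorem}, applied in $\calC=s\mathcal{V}ec(\bbC)$. By Lemma~\ref{liemonoid}, the Lie-Rinehart superalgebra $(A,s\mathrm{Der}(A),\id_{s\mathrm{Der}(A)})$ is a Lie-Rinehart monoid in $s\mathcal{V}ec(\bbC)$ whose anchor is $\alpha(x\otimes a)=x(a)$. We take $\mathfrak L=(I,\mathbf 0)=(\bbC,0)$ and $V=I$ with the trivial action $\psi=\mathbf 0$.

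\textbf{Step 1: $\mathcal H$ is a crossed homomorphism.} Unwinding the diagrammatic condition of Corollary~\ref{newfunctor} in $s\mathcal{V}ec(\bbC)$, with the Koszul sign from the braiding $c_{L,L}$, it reads
\[
\mathcal H([x,y])=x\bigl(\mathcal H(y)\bigr)-(-1)^{|x||y|}\,y\bigl(\mathcal H(x)\bigr)
\]
for homogeneous $x,y\in s\mathrm{Der}(A)$. Here $\mathcal H$ is even, so no extra signs arise from moving $\mathcal H$ past elements. This is exactly the defining identity of an even superderivation (a $1$-cocycle) of $s\mathrm{Der}(A)$ with values in the module $A$ via the anchor, which is how $s\mathrm{Der}(L,A)_0$ is understood in the preceding paragraph. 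I would simply record this translation, with the sign check, as the only real verification.

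\textbf{Step 2: apply the corollary.} Corollary~\ref{newfunctor} then makes ${\bf WMod}_A(s\mathrm{Der}(A))$ a strict left module category over ${\bf Mod}(I)\cong s\mathcal{V}ec(\bbC)$. Acting by the unit object $V=I$ gives the endofunctor $\mathcal F^{I}_{\mathcal H}$, denoted $\mathcal F_{\mathcal H}$ in the paper.

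\textbf{Step 3: compute the formulas.} From \eqref{newhom}, with $V=I$ and $\psi=\mathbf 0$, the braiding with $I$ is trivial and the $\psi$-action on $I$ is the canonical identification. Hence $\psi\vartriangleright\rho=\rho$ and $\psi\vartriangleright\gamma=\gamma+\rho\circ(\mathcal H\otimes\id_M)$, as already noted before the strict-monoidal theorem. Note that the statement's ``$\id_A$'' should read $\id_M$. Morphisms go to themselves, and functoriality is part of Theorem~\ref{maintheorem}.

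\textbf{Main obstacle.} The only delicate point is the sign bookkeeping in Step 1, namely matching the braided term of the crossed-homomorphism axiom with the superderivation convention. Because $\mathcal H$ is even, this reduces to the single Koszul sign $(-1)^{|x||y|}$.
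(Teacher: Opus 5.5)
Your route is the paper's. The corollary is the endofunctor $\mathcal F_{\mathcal H}=\mathcal F^{I}_{\mathcal H}$ from Corollary~\ref{newfunctor}, with trivial $\mathfrak L=(I,\mathbf 0)$ and acting object $V=I$ in $s\mathcal{V}ec(\bbC)$, together with the formula $\mathcal F_{\mathcal H}(\gamma)=\gamma+\rho\circ(\mathcal H\otimes\id_M)$ recorded before the strict-monoidality theorem. Your Step 1 translation of the crossed-homomorphism axiom into the cocycle identity is right, and so is your correction of $\id_A$ to $\id_M$.

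However, Step 3 does not follow from the choice you make. You set the action of $\mathfrak L=I$ on $V=I$ to be $\psi=\mathbf 0$. In the second summand of \eqref{newhom}, $\psi$ is applied to the $\mathfrak L$-output of $\mathcal H$ together with the $V$-strand. For the zero action that summand vanishes identically, so $\psi\vartriangleright\gamma=\gamma$ and you get the identity functor, not the functor in the statement. Your sentence ``the $\psi$-action on $I$ is the canonical identification'' contradicts your own choice.

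The fix is to take $\psi=l_I\colon I\otimes I\to I$, i.e.\ $\lambda\otimes\mu\mapsto\lambda\mu$ in $s\mathcal{V}ec(\bbC)$. This is a genuine module over the abelian Lie monoid $(I,\mathbf 0)$. The left side of the module axiom is $\psi\circ(\mathbf 0\otimes\id_I)=0$. The right side is $\psi\circ(\id_I\otimes\psi)-\psi\circ(\id_I\otimes\psi)\circ(c_{I,I}\otimes\id_I)$, which is $0$ because $c_{I,I}=\id_{I\otimes I}$.

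The same computation shows that every even endomorphism of an object $V$ is an $(I,\mathbf 0)$-action. So the paper's identification ${\bf Mod}_{\calC}(I)\cong\calC$ must be read as $V\mapsto(V,l_V)$; the zero-action reading makes $\mathcal H$ invisible.

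With $\psi=l_I$, and since $c_{L,I}$ and $c_{A,I}$ are identities, the first summand of \eqref{newhom} reduces to $\gamma$ and the second to $\rho\circ(\mathcal H\otimes\id_M)$. This gives exactly the stated formula. Functoriality then comes from Theorem~\ref{maintheorem} as you say.
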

Let $P_m^{\pm}=\bbC[t_1^{\pm 1},t_2^{\pm 1},\dots,t_m^{\pm 1}]$ denote the Laurent polynomial ring, and let $\Lambda_n=\bbC[\theta_1, \theta_2,\dots, \theta_n]$ be the exterior algebra, characterized by the relations $\theta_i\theta_j=-\theta_j \theta_i$ for all $ i,j=1,2,\dots,n$. Note that $\Lambda_n$ is finite-dimensional. Set $A_{m,n}= P_m^{\pm}\otimes \Lambda_n$. Then $A_{m,n}$ carries a natural superalgebra structure with the $\bbZ_2$-grading defined by  $|t_i^{\pm 1}|=0 $ and $ |\theta_j|=1 $ for all $  i=1,2,\dots,m,\ j=1,2,\dots,n.$ The Lie superalgebra of superderivations $\mathrm{sDer}(A_{m,n})$ is the \emph{Witt superalgebra} $\mathcal{W}(m,n ).$  
		
For $\boldsymbol{\alpha}=(\alpha_1,\alpha_2,\dots,\alpha_m)\in \bbZ^{m}$ and $I=\{i_1,i_2,\dots, i_k\} \subset \{1,2,\dots,n\}$, let $t^{\boldsymbol{\alpha}}=t_1^{\alpha_1}  t_2^{\alpha_2}\cdots t_m^{\alpha_m},$ and ${\theta}_{I}=\theta_{i_1} \theta_{i_2}\cdots \theta_{i_k}.$ Moreover, we set $\theta_{\emptyset}=1.$ For any $f,g\in A_{m,n},$ we define the following superderivations $f\frac{\partial}{\partial t_i}$ and $f\frac{\partial}{\partial \theta_j}$:
\[ f\frac{\partial}{\partial t_i}(g)=f\frac{\partial g}{\partial t_i}, \        f\frac{\partial}{\partial \theta_j}(g)=f\frac{\partial g}{\partial \theta_j}. \]
Then the Witt superalgebra $\mathcal{W}(m,n)$ has a basis as follows:
\[ \Big\{t^{\boldsymbol{\alpha}} \theta_{I} \frac{\partial}{\partial t_i}, \ t^{\boldsymbol{\alpha}} \theta_{I} \frac{\partial}{\partial \theta_j} \ | \ \boldsymbol{\alpha} \in \bbZ^{m}, I\subset  \{1,2,\dots,n\}, i \in \{1,2,\dots,m\}, j \in \{ 1,2, \dots,n\}  \Big\}        \]
with the bracket defined by 
\[ [t^{\boldsymbol{\alpha}_1} \theta_{I_1} \partial_1, t^{\boldsymbol{\alpha}_2} \theta_{I_2} \partial_2 ]= t^{\boldsymbol{\alpha}_1} \theta_{I_1} \partial_1(t^{\boldsymbol{\alpha}_2} \theta_{I_2}) \partial_2 -(-1)^{(|I_1|+|\partial_1|)(|I_2|+|\partial_2|)}     t^{\boldsymbol{\alpha}_2} \theta_{I_2} \partial_2 (t^{\boldsymbol{\alpha}_1} \theta_{I_1}) \partial_1,  \]
where $\partial_1, \  \partial_2 \in \left\{\frac{\partial}{\partial t_1}, \frac{\partial}{\partial t_2}, \dots, \frac{\partial}{\partial t_m}, \frac{\partial}{\partial \theta_1}, \frac{\partial}{\partial \theta_2}, \dots, \frac{\partial}{\partial \theta_n}\right\}.$
		
Let $\mathfrak{gl}(m,n)$ be the general linear Lie superalgebra, with basis $\{ E_{ij} \mid 1 \leq i,j \leq m+n \}$. 
Define a map 
\begin{align*}
\mathcal{H}: \mathcal{W}(m,n) &\rightarrow \mathfrak{gl}(m,n)\otimes A_{m,n} \\
t^{\boldsymbol{\alpha}}\theta_I\frac{\partial}{\partial t_i} & \mapsto \sum\limits_{s=1}^m E_{si}\otimes \frac{\partial}{\partial t_s}(t^{\boldsymbol{\alpha}}\theta_I) + (-1)^{\mid I \mid -1} \sum\limits_{l=1}^n E_{m+l,i}\otimes \frac{\partial}{\partial \theta_l}(t^{\boldsymbol{\alpha}}\theta_I),\\
t^{\boldsymbol{\alpha}}\theta_I\frac{\partial}{\partial \theta_j}& \mapsto \sum\limits_{s=1}^m E_{s,m+j}\otimes \frac{\partial}{\partial t_s}(t^{\boldsymbol{\alpha}}\theta_I) +(-1)^{\mid I \mid -1} \sum\limits_{l=1}^n E_{m+l,m+j}\otimes \frac{\partial}{\partial \theta_l}(t^{\boldsymbol{\alpha}}\theta_I). 
\end{align*}
According to \cite{LX23}, the map $\mathcal{H}$ is a crossed homomorphism.
		
Let $A_m$ denote the $m$-th Weyl algebra, generated by $x_i^{\pm 1}, y_i $ for all $i=1,2,\dots, m$, subject to the relations:
\begin{align*}
x_ix_i^{-1}&=x_i^{-1}x_i=1, \ \forall i=1,2,\dots,m,\\
x_ix_j&=x_jx_i, \ y_iy_j=y_jy_i, \ \forall i,j=1,2,\dots,m,\\
y_ix_i&=x_iy_i+1, x_iy_j=y_jx_i,\ \forall 1 \leq i\neq j\leq m.
\end{align*}
Let $C_n$ denote the $n$-th Clifford algebra, generated by $\psi_i,\xi_i $ for all $ i=1,2,\dots,n,$ with the following relations:
\begin{align*}
			\xi_i\xi_j+\xi_j\xi_i&=0, \ \forall i,j=1,2,\dots,n,\\
			\psi_i\psi_j+\psi_j\psi_i&=0, \ \forall i,j=1,2,\dots,n,\\
			\psi_i\xi_i+\xi_i\psi_i&=1, \ \forall i=1,2,\dots,n,\\
			\xi_i\psi_j+\psi_j\xi_i&=0, \ \forall 1 \leq i\neq j \leq n.
\end{align*}
Set $K_{m,n}=A_m\otimes C_n.$ We call $K_{m,n}$ the Clifford-Weyl superalgebra, where the $\bbZ_2$-grading is given by  $|x_{i}^{\pm 1}|=|y_i|=0, \ |\xi_j|=|\psi_j|=1 $ for all $ i=1,2,\dots,m, j=1,2, \dots,n.$
Let $(P,\rho)$ be a representation of the Clifford-Weyl superalgebra $K_{m,n}$. Then the pair $(P,\rho|_{\mathcal{W}(m,n)})$ is also a representation of $\mathcal{W}(m,n)$. Moreover, we can show that $(P;\rho|_{A_{m,n}}, \rho|_{\mathcal{W}(m,n)} )$ is a weak representation of the Lie-Rinehart superalgebra $\left(A_{m,n}, \mathcal{W}(m,n), \id_{\mathcal{W}(m,n)}\right)$.  
Applying Theorem \ref{superver}, we get
\begin{coro}
We have a functor 
\begin{align*}
				F^{\rho}_{\mathcal{H}}: {\bf Mod}(\mathfrak{gl}(m,n)) &\rightarrow {\bf WMod}_{A_{m,n}}(\mathcal{W}(m,n))\\
				(V,\theta) & \mapsto \Big(V\otimes P; \theta\vartriangleright\rho|_{A_{m,n}}, \theta\vartriangleright \rho|_{\mathcal{W}(m,n)} \Big).
\end{align*}
\end{coro}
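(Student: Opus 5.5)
The corollary follows from Theorem~\ref{superver} once two inputs are checked. Theorem~\ref{superver} gives a bifunctor $F_{\mathcal H}$ for the Lie-Rinehart superalgebra $(A_{m,n},\mathcal W(m,n),\mathrm{id}_{\mathcal W(m,n)})$, the Lie superalgebra $\mathfrak{gl}(m,n)$ and the crossed homomorphism $\mathcal H$ of \cite{LX23}. I would then fix the second argument at the weak module $(P;\rho|_{A_{m,n}},\rho|_{\mathcal W(m,n)})$, exactly as in part (1) of the corollary listing the functors $\mathcal F_{\mathcal H}^{(M;\rho,\gamma)}$. The two inputs are:
\begin{enumerate}
\item[(i)] $\mathcal H$ is an even crossed homomorphism in the sense of the categorical definition, specialized to $s\mathcal Vec(\mathbb C)$;
\item[(ii)] $P$ really is a weak module.
\end{enumerate}

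For (i), evenness is read off the formula. The element $t^{\boldsymbol\alpha}\theta_I\partial_{t_i}$ has parity $|I|$, and its image lies in $E_{si}\otimes\partial_{t_s}(t^{\boldsymbol\alpha}\theta_I)$, of parity $0+|I|$, and in $E_{m+l,i}\otimes\partial_{\theta_l}(t^{\boldsymbol\alpha}\theta_I)$, of parity $1+(|I|-1)$. Both are $|I|$, and the same count works for the $\partial_{\theta_j}$ generators. The crossed homomorphism identity itself I take from \cite{LX23}. I only need to check that their sign conventions agree with the graphical axiom translated through the Koszul braiding, in the same way Lemma~\ref{liemonoid} translates the other axioms.

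For (ii), I embed $A_{m,n}$ and $\mathcal W(m,n)$ into $K_{m,n}$ via
\[
t_i\mapsto x_i,\qquad \theta_j\mapsto \xi_j,\qquad \partial_{t_i}\mapsto y_i,\qquad \partial_{\theta_j}\mapsto \psi_j,\qquad f\,\partial\mapsto f\,\partial .
\]
Then:
\begin{enumerate}
\item[(a)] $P$ is an $A_{m,n}$-module, because this map restricted to $A_{m,n}$ is a superalgebra homomorphism: the $x_i$ commute, the $\xi_j$ anticommute, and the $x_i$ commute with the $\xi_j$.
\item[(b)] $P$ is a $\mathcal W(m,n)$-module. For this I check that $f\partial\mapsto f\partial$ sends the super bracket to the supercommutator in $K_{m,n}$. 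This reduces to the identity $[\partial,g]=\partial(g)$ in $K_{m,n}$ for $g\in A_{m,n}$, which follows from $y_ix_i-x_iy_i=1$ and $\psi_j\xi_j+\xi_j\psi_j=1$.
\item[(c)] The compatibility condition \eqref{compatible2}, with anchor $\mathrm{id}$, reads $(f\partial)\cdot(g\cdot p)=(-1)^{|f\partial||g|}g\cdot(f\partial\cdot p)+(f\partial(g))\cdot p$. This is again the identity $[\partial,g]=\partial(g)$ in $K_{m,n}$.
\end{enumerate}

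The main obstacle is sign bookkeeping. The first place is the comparison in (i) of the \cite{LX23} crossed homomorphism condition with the categorical one, in particular the sign $(-1)^{|I|-1}$ and the order of the tensor factors $\mathfrak L\otimes A$. The second is the super-Leibniz signs in (b). I would first verify both on generators of low degree and then extend by the derivation property. After (i) and (ii), Theorem~\ref{superver} gives $F^\rho_{\mathcal H}=F_{\mathcal H}(-,P)$. On objects it is given by the formulas \eqref{newhom}, and on morphisms it sends $\varphi$ to $\varphi\otimes\mathrm{id}_P$; functoriality is inherited from the bifunctor.
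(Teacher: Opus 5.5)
There is a genuine gap, and it sits exactly at the step you postpone in (i). Your overall route matches the paper's. The paper cites \cite{LX23} for $\mathcal H$, asserts that $P$ is a weak module, and fixes the second argument of the bifunctor of Theorem~\ref{superver}. Your part (ii) is correct and supplies checks the paper omits.

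The comparison of sign conventions does not work out. Use this paper's conventions: the target is $\mathfrak{gl}(m,n)\otimes A_{m,n}$, and $L$ acts through $(\mathrm{id}\otimes\alpha)\circ(c_{L,\mathfrak L}\otimes\mathrm{id})$, so $x\cdot(u\otimes a)=(-1)^{|x||u|}u\otimes x(a)$. With these conventions the printed $\mathcal H$ is \emph{not} a crossed homomorphism once $m,n\ge 1$.

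Take $m=n=1$, $x=\partial_\theta$, $y=t\theta\partial_\theta$. Then $[x,y]=t\partial_\theta$, $\mathcal H(x)=0$, $\mathcal H(y)=E_{12}\otimes\theta+E_{22}\otimes t$ and $\mathcal H(t\partial_\theta)=E_{12}\otimes 1$. So the axiom would say $E_{12}\otimes 1=x\cdot\mathcal H(y)=-E_{12}\otimes 1$; the sign comes from moving the odd $x$ past the odd $E_{12}$.

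This is not an artefact of the formalism. Let $V=\mathbb C^{1|1}$ (basis $e_1$ even, $e_2$ odd) and $P=A_{1,1}$ (the natural $K_{1,1}$-module), and write $F(z)$ for the action of $z$ on $V\otimes P$ given by \eqref{newhom}. Then $F([x,y])(e_2\otimes 1)=e_1\otimes 1$ but $[F(x),F(y)](e_2\otimes 1)=-e_1\otimes 1$. So $V\otimes P$ is not even a $\mathcal W(1,1)$-module, and the corollary as printed fails. For $m=1$, $n=2$ the sign $(-1)^{|I|-1}$ also misbehaves, e.g.\ on $x=\partial_{\theta_1}$, $y=t\theta_1\theta_2\partial_t$. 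Your proof, like the paper's one-line appeal to Theorem~\ref{superver}, therefore rests on an identity that is false for the map as written.

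The printed formula is correct if you read it in $A_{m,n}\otimes\mathfrak{gl}(m,n)$, with $\mathcal W(m,n)$ acting on the first factor without sign. This is the $P\otimes V$ ordering, presumably the convention of \cite{LX23}. To feed it into the left action of Theorem~\ref{superver}, you must post-compose with the braiding, i.e.\ multiply every summand $u\otimes a$ by $(-1)^{|u||a|}$. Let $\partial_a$ run over $\partial_{t_1},\dots,\partial_{\theta_n}$, and let $|a|,|b|$ be the index parities. The result is $\mathcal H(f\partial_a)=\sum_b(-1)^{|a|(|f|+|b|)}E_{ba}\otimes\partial_b f$. Explicitly, $\mathcal H(f\partial_{t_i})=\sum_s E_{si}\otimes\partial_{t_s}f+\sum_l E_{m+l,i}\otimes\partial_{\theta_l}f$ and $\mathcal H(f\partial_{\theta_j})=(-1)^{|f|}\bigl(\sum_s E_{s,m+j}\otimes\partial_{t_s}f-\sum_l E_{m+l,m+j}\otimes\partial_{\theta_l}f\bigr)$.

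For this map the crossed-homomorphism axiom can be verified directly, and in the example above it gives $x\cdot\mathcal H(y)=E_{12}\otimes 1=\mathcal H([x,y])$. The second-derivative terms on the left match the two $\alpha$-terms of the axiom. The products of first derivatives match the two halves of $[E_{ba},E_{dc}]$ in the $\mathfrak u$-term. The two readings coincide when $m=0$ or $n=0$, which is why the classical case hides the problem. With the corrected $\mathcal H$, your evenness check, (a)--(c), and the restriction of the bifunctor to the fixed argument $P$ go through unchanged. Without it, the convention translation you flagged as bookkeeping is precisely where the argument breaks.
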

\begin{rmk}
This functor is an analogue of the Shen-Larsson functor for $(\mathcal{W}(m,n),\mathfrak{gl}(m,n))$.
\end{rmk}

\section{Appendix}
For convenience, we recall the method of graphical calculus (or string diagrams) in the monoidal category \cite{KT95,TV17,Kas95}. In what follows, we work in a symmetric linear monoidal category $\calC$, i.e., $\mathrm{Hom}_{\calC}(X,Y)$ is a vector space for all objects $X,Y\in \ob(\calC)$, the composition of morphisms and tensor product of morphisms are bilinear. 
		
We represent a morphism $f: V \rightarrow W$ in $\calC$ by a dot with two vertical lines upwards as follows:
\begin{equation*}\label{repoff}
			\begin{tikzpicture}[anchorbase]
				\draw[bmod] (0, 0) -- (0, .5);
				\btoken{0, .25}{east}{f};
			\end{tikzpicture} \ . 
\end{equation*}
		
In particular, we omit the vertical line connecting to the unit object $I$ and represent the identity map by a vertical line. 
The picture for the composition of $f:U\rightarrow V$ and $g:V\rightarrow W$ is obtained by putting the picture of $g$ on top of the picture of $f$:    
		\begin{equation*}\label{compositionoffg}
			\begin{tikzpicture}[anchorbase]
				\draw[bmod] (0, 0) -- (0, 1);
				\btoken{0, .5}{east}{g \circ f};
			\end{tikzpicture} \ = \ 
			\begin{tikzpicture}[anchorbase]
				\draw[bmod] (0, 0) -- (0, 1);
				\btoken{0, .25}{east}{ f};
				\btoken{0, .75}{east}{ g};
			\end{tikzpicture} \ . 
		\end{equation*}
The tensor product of two morphisms $f:U \rightarrow V$ and $h:W\rightarrow Z$ is represented by circles placed side by side:
		\begin{equation*}\label{tensoroffh}
			\begin{tikzpicture}[anchorbase]
				\draw[bmod] (0, 0) -- (0, 1);
				\btoken{0, .5}{east}{f \otimes h};
			\end{tikzpicture} \ = \ 
			\begin{tikzpicture}[anchorbase]
				\draw[bmod] (0, 0) -- (0, 1);
				\draw[bmod] (0.5, 0) -- (0.5, 1);
				\btoken{0, .5}{east}{ f};
				\btoken{0.5, .5}{west}{h };
			\end{tikzpicture} \ . 
		\end{equation*}
Thus, by equation \eqref{defoftensor}, we have
		\begin{equation*}\label{pull}
			\begin{tikzpicture}[anchorbase]
				\draw[bmod] (0, 0) -- (0, 1);
				\draw[bmod] (0.5, 0) -- (0.5, 1);
				\btoken{0, .5}{east}{ f};
				\btoken{0.5, .5}{west}{h };
			\end{tikzpicture} \ = \ 
			\begin{tikzpicture}[anchorbase]
				\draw[bmod] (0, 0) -- (0, 1);
				\draw[bmod] (0.5, 0) -- (0.5, 1);
				\btoken{0, .25}{east}{ f};
				\btoken{0.5, .75}{west}{h };
			\end{tikzpicture} \ = \ 
			\begin{tikzpicture}[anchorbase]
				\draw[bmod] (0, 0) -- (0, 1);
				\draw[bmod] (0.5, 0) -- (0.5, 1);
				\btoken{0, .75}{east}{f};
				\btoken{0.5, .25}{west}{h};
			\end{tikzpicture} \ . 
		\end{equation*}
		
This leads to the following ``partial isotopy principle": for any figure presenting a morphism of $\calC$, the part of the figure lying to the left (or to the right) of a vertical line may be pushed up or down without changing the corresponding morphism in $\calC$. We shall use this principle frequently and without any further explanation in the sequel.
		
For any $V,W\in \ob(\calC)$, the braiding $c_{V,W}$ is represented by
		$\begin{tikzpicture}[anchorbase]
			\draw[bmod] (0, 0) -- (0.5, .5);
			\draw[bmod] (0.5, 0) -- (0, .5);
		\end{tikzpicture}\ .$
		Then equations \eqref{braid1}--\eqref{braid2} are represented by
		$\begin{tikzpicture}[anchorbase]
			\draw[bmod] (0, 0) -- (0.5, .5);
			\draw[bmod] (0.5, 0) -- (0, .5);
		\end{tikzpicture} \ = \ 
		\begin{tikzpicture}[anchorbase]
			\draw[bmod] (0, 0) -- (0.25, .5);
			\draw[bmod] (0.5, 0) -- (0, .5);
			\draw[bmod] (0.25, 0) -- (0.5, .5);
		\end{tikzpicture} \ \ \text{and} \ \ 
		\begin{tikzpicture}[anchorbase]
			\draw[bmod] (0, 0) -- (0.5, .5);
			\draw[bmod] (0.5, 0) -- (0, .5);
		\end{tikzpicture} \ = \ 
		\begin{tikzpicture}[anchorbase]
			\draw[bmod] (0, 0) -- (0.5, .5);
			\draw[bmod] (0.25, 0) -- (0, .5);
			\draw[bmod] (0.5, 0) -- (0.25, .5);
		\end{tikzpicture} \ . $
The symmetry of the braiding is represented by 
		\begin{equation*}
			\begin{tikzpicture}[anchorbase]
				\draw[bmod] (0, 0) -- (0.5, .5);
				\draw[bmod] (0.5, 0) -- (0, .5);
				\draw[bmod] (0, 0) -- (0.5, -.5);
				\draw[bmod] (0.5, 0) -- (0, -.5);
			\end{tikzpicture} \ = \ 
			\begin{tikzpicture}[anchorbase]
				\draw[bmod] (0, 0) -- (0, 1);
				\draw[bmod] (0.5, 0) -- (0.5, 1);
			\end{tikzpicture} \ . 
		\end{equation*}
		
For morphisms $f:V\rightarrow V'$ and $g:W\rightarrow W'$, the natural isomorphisms of the braiding are represented by 
		\begin{equation*}\label{natofbraid}
			\begin{tikzpicture}[anchorbase]
				\draw[bmod] (0, 0) -- (0, .5)--(.5,1);
				\draw[bmod] (0.5, 0) -- (0.5, .5)--(0,1);
				\btoken{0, .25}{east}{f};
				\btoken{0.5, .25}{west}{g};
			\end{tikzpicture} \ = \ 
			\begin{tikzpicture}[anchorbase]
				\draw[bmod] (0, 0) -- (0.5, .5)--(.5,1);
				\draw[bmod] (0.5, 0) -- (0, .5)--(0,1);
				\btoken{0.5, .75}{west}{f};
				\btoken{0, .75}{east}{g};
			\end{tikzpicture} \ . 
		\end{equation*}
		
We denote this equation by $(c_{V,W},f,g)$.
The following lemmas are frequently used in our proof.
\begin{lem}\label{identityofbraid}
For any morphism $f:V\rightarrow W\otimes U$ and $Z\in \ob(\calC)$, we have
			\begin{equation*}
				\begin{tikzpicture}[anchorbase]
					\draw[bmod] (0.5, 0) -- (0.5, .5)--(0,1);
					\draw[bmod] (0.5, 0.5) -- (1, 1);
					\draw[bmod] (0.75, 0) -- (.25, .5)--(.25,1);
					\btoken{0.5, .5}{west}{f};
				\end{tikzpicture} \ = \ 
				\begin{tikzpicture}[anchorbase]
					\draw[bmod] (0.5, 0) -- (0.5, .5)--(0,1);
					\draw[bmod] (0.5, 0.5) -- (1, 1);
					\draw[bmod] (0.75, 0) -- (.75, 1);
					\btoken{0.5, .5}{east}{f};
				\end{tikzpicture} \ \ \text{and} \ \ 
				\begin{tikzpicture}[anchorbase]
					\draw[bmod] (0.5, 0) -- (0.5, .5)--(0,1);
					\draw[bmod] (0.5, 0.5) -- (1, 1);
					\draw[bmod] (0.25, 0) -- (.75, .5)--(.75,1);
					\btoken{0.5, .5}{east}{f};
				\end{tikzpicture} \ = \ 
				\begin{tikzpicture}[anchorbase]
					\draw[bmod] (0.5, 0) -- (0.5, .5)--(0,1);
					\draw[bmod] (0.5, 0.5) -- (1, 1);
					\draw[bmod] (0.25, 0) -- (.25, 1);
					\btoken{0.5, .5}{west}{f};
				\end{tikzpicture}\ .
			\end{equation*}
\end{lem}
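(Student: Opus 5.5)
The plan is to deduce both identities from the naturality of the braiding combined with the hexagon relations \eqref{braid1}--\eqref{braid2} and the symmetry $c_{W,V}\circ c_{V,W}=\id_{V\otimes W}$. The idea is that a strand crossing the input of $f$ can be slid through the vertex $f$. After the slide it crosses both outputs $W$ and $U$. One of those two crossings is then undone by symmetry, leaving only the crossing with the other output.

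For the first identity I would read both sides as morphisms $V\otimes Z\to W\otimes Z\otimes U$. The right-hand side is $(\id_W\otimes c_{U,Z})\circ(f\otimes\id_Z)$. The left-hand side is $f$ applied after $Z$ has been braided across the $V$-strand, followed by a crossing of $Z$ with the output $W$, which brings $Z$ to the middle position.
\begin{enumerate}
\item First, apply naturality in the form $(c_{V,Z},f,\id_Z)$. This gives $(\id_Z\otimes f)\circ c_{V,Z}=c_{W\otimes U,Z}\circ(f\otimes \id_Z)$, which moves the crossing from below $f$ to above $f$.
\item Next, expand $c_{W\otimes U,Z}=(c_{W,Z}\otimes\id_U)\circ(\id_W\otimes c_{U,Z})$ using \eqref{braid2}.
\item The remaining crossing of $Z$ with $W$ then meets its inverse crossing in the picture, and the two cancel by symmetry, $c_{Z,W}\circ c_{W,Z}=\id$. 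Only $(\id_W\otimes c_{U,Z})\circ(f\otimes\id_Z)$ survives, which is the right-hand side.
\end{enumerate}

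The second identity is the mirror image, with $Z$ entering from the left of $V$. I would argue in the same way, using naturality $(c_{Z,V},\id_Z,f)$ and the other hexagon \eqref{braid1}, $c_{Z,W\otimes U}=(\id_W\otimes c_{Z,U})\circ(c_{Z,W}\otimes\id_U)$. Here the crossing with $U$ cancels by symmetry, and what survives is $f$ tensored with a vertical $Z$-strand on the left.

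The only real obstacle is bookkeeping. One has to read off correctly from the pictures which crossing appears on each side, i.e.\ whether $c_{V,Z}$ or $c_{Z,V}$ occurs, and which output the moved strand ends up adjacent to. Because $\calC$ is symmetric, over- and under-crossings coincide, so this choice never affects validity; it only determines which hexagon identity is the right one to invoke. Once the composites are written algebraically, each identity is a two-line computation.
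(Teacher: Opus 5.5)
Your proposal is correct and is essentially the paper's argument: the paper proves the first identity from the naturality relation $(c_{V,Z},f,\id_Z)$ followed by the symmetry $c_{Z,W}\circ c_{W,Z}=\id_{W\otimes Z}$, which are your steps (a) and (c). Your step (b), the hexagon identity \eqref{braid2}, is used tacitly in the paper when $c_{W\otimes U,Z}$ is drawn as two separate crossings, and your mirror argument via $(c_{Z,V},\id_Z,f)$ and \eqref{braid1} correctly handles the second identity, which the paper does not prove.
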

\begin{proof}
We just prove the first equality. By the natural isomorphism of the braiding $(c_{V,Z},f,\id_Z)$ and the symmetry of the braiding, it can be obtained. 
\end{proof}
		
\begin{lem}\label{natof1}
For any morphisms $f:V\rightarrow W\otimes U$ and $g:X\otimes Y \rightarrow Z$, we have 
			\begin{equation*}
				\begin{tikzpicture}[anchorbase]
					\draw[bmod] (0.25, 0) -- (0.5, .25)--(0.5,.5)--(.25,.75)--(.25,1);
					\draw[bmod] (0.5, 0) -- (.25, .25)--(.25,.5)--(0,.75)--(0,1);
					\draw[bmod] (0.25, 0.5) -- (.5, .75)--(.5,1);
					\draw[bmod] (0.75, 0) -- (.5, .25);
					\btoken{0.25, .5}{east}{f};
					\btoken{0.5, .25}{west}{g};
				\end{tikzpicture} \ = \ 
				\begin{tikzpicture}[anchorbase]
					\draw[bmod] (0, 0) -- (0, .25)--(0.5,.75)--(.5,1);
					\draw[bmod] (0.5, 0) -- (.5, .25)--(0,.75)--(0,1);
					\draw[bmod] (0.5, 0.25) -- (1, .75)--(1,1);
					\draw[bmod] (1, 0) -- (1, .25)--(.5,.75);
					\btoken{0.5, .25}{east}{f};
					\btoken{0.5, .75}{west}{g};
				\end{tikzpicture} \ . 
			\end{equation*}
\end{lem}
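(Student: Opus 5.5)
The identity to prove, Lemma~\ref{natof1}, says: for $f:V\to W\otimes U$ and $g:X\otimes Y\to Z$, the left picture (apply $g$ to the last two strands, cross the result under/over the first strand, then apply $f$ to the first strand and cross) equals the right picture (apply $f$, braid the $X\otimes Y$ pair past $W\otimes U$, then apply $g$). My plan is to split the statement into two elementary moves, each an instance of naturality $(c_{\cdot,\cdot},\cdot,\cdot)$, and reduce each move to \eqref{braid1}--\eqref{braid2}.

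First, move $g$ through the crossing. The left-hand side contains a braiding $c_{V,Z}$ (or its mirror) composed after $\id_V\otimes g$. By naturality in the form $(c_{V,Z},\id_V,g)$, i.e. $c_{V,Z}\circ(\id_V\otimes g)=(g\otimes\id_V)\circ c_{V,X\otimes Y}$, I can slide $g$ above the crossing. This leaves a crossing of the single strand $V$ with the composite object $X\otimes Y$.

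Second, move $f$ through the crossing. The crossing of $V$ with $X\otimes Y$ is followed by $f$ on the $V$ strand, so I use naturality $(c_{X\otimes Y,V},\id,f)$ (or $(c_{V,X\otimes Y},f,\id)$) to push $f$ below the crossing. This turns the crossing into a crossing of $W\otimes U$ with $X\otimes Y$.

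Third, resolve the composite crossings. I expand $c_{V,X\otimes Y}$ and $c_{W\otimes U,X\otimes Y}$ strand by strand using \eqref{braid1} and \eqref{braid2}. I then use the partial isotopy principle to align heights, so both sides become the same composite of $f$, $g$ and elementary crossings. Where a crossing reverses direction, symmetry $c_{W,V}c_{V,W}=\id$ (as in Lemma~\ref{identityofbraid}) lets me choose the crossing orientation freely.

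The main obstacle is bookkeeping: reading the pictured strand order correctly (which output of $f$ is crossed, and whether $g$ sits between the outputs of $f$) and choosing the matching braid identity. I would first write both sides as explicit composites of $c$'s, $f$ and $g$ and compare them. If a discrepancy appears, I would insert $\id=c\circ c$ via symmetry to fix it.
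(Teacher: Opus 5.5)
Your plan rests on a misreading of the two pictures, so Steps~1 and~2 refer to crossings that are not there. Read bottom to top, the domain is $X\otimes V\otimes Y$ and the lemma asserts
\[
(\id_W\otimes c_{U,Z})\circ(f\otimes g)\circ(c_{X,V}\otimes\id_Y)=(\id_W\otimes g\otimes\id_U)\circ(c_{X,W}\otimes c_{U,Y})\circ(\id_X\otimes f\otimes\id_Y).
\]
The inputs of $g$ are the first and third strands, not the last two. Only $X$ crosses $V$. On the left, the output $Z$ of $g$ crosses only the output $U$ of $f$. There is no crossing $c_{V,Z}$ and no crossing of $V$ with the pair $X\otimes Y$. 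The right side has no block crossing $c_{W\otimes U,X\otimes Y}$, because $Z$ ends up between $W$ and $U$. Your two moves would prove naturality of a single crossing with respect to $f$ and $g$, which is not what the lemma says.

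The missing idea is that the strand $X$ passes $V$ below $f$ but passes $U$ above $f$, and these two crossings have to be reconciled. The computation goes as follows:
\begin{enumerate}
\item Slide $g$ down through the top crossing by $(c_{U,X\otimes Y},\id_U,g)$.
\item Split $c_{U,X\otimes Y}=(\id_X\otimes c_{U,Y})\circ(c_{U,X}\otimes\id_Y)$ by \eqref{braid1}.
\item Push $f$ below $c_{X,V}$ by naturality.
\item Split $c_{X,W\otimes U}=(\id_W\otimes c_{X,U})\circ(c_{X,W}\otimes\id_U)$ by \eqref{braid1}.
\end{enumerate}
The factor $c_{U,X}\circ c_{X,U}$ now sits in the middle. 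Only symmetry removes it, and then the right-hand side appears.

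So symmetry is the essential step, not a fallback for fixing orientations. Reading every crossing as $c$ and taking $f,g$ to be identities with $W=Y=I$, the two sides are $c_{U,X}\circ c_{X,U}$ and $\id_{X\otimes U}$. The $f$-part of the computation is exactly the second identity of Lemma~\ref{identityofbraid} tensored with $\id_Y$. The paper's proof is precisely this: naturality $(c_{U,X\otimes Y},\id_U,g)$ followed by that lemma. Your toolkit of naturality, \eqref{braid1} and symmetry is enough, but only once the diagram is read correctly and symmetry is applied at this specific point.
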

\begin{proof}
The proof is obtained by the natural isomorphism of the braiding $(c_{U,X\otimes Y}, \id_U,g)$, and Lemma \ref{identityofbraid}. 
\end{proof}
		
We now recall the diagrammatic definitions of monoids, comonoids, bimonoids, and Lie monoids in $\calC$. For more details, we refer to \cite{EGNO15,TV17,Maj93,Maj94}.
\begin{defi}
The triple $(A,m,\eta)$ is called a \emph{monoid} in $\calC$ if $A\in \ob(\calC)$, $m: A\otimes A \rightarrow A$ and $\eta: I \rightarrow A $ are morphisms such that the following axioms are satisfied:
			\begin{equation*}\label{defiofasso}
				\begin{tikzpicture}[anchorbase]
					\draw[bmod] (0, 0) -- (0.5, .5)--(0.5,.75);
					\draw[bmod] (0.5, 0) -- (.25, .25);
					\draw[bmod] (0.75, 0) -- (.75, .25)--(.5,.5);
					\btoken{0.25, .25}{east}{m};
					\btoken{0.5, .5}{west}{m};
				\end{tikzpicture}\ = \ 
				\begin{tikzpicture}[anchorbase]
					\draw[bmod] (0, 0) -- (0, .25)--(0.25,.5)--(0.25,.75);
					\draw[bmod] (0.25, 0) -- (.5, .25)--(.25,.5);
					\draw[bmod] (0.75, 0) -- (.5, .25);
					\btoken{0.25,.5}{east}{m};
					\btoken{0.5,.25}{west}{m};
				\end{tikzpicture} \ \ \text{and} \ \ 
				\begin{tikzpicture}[anchorbase]
					\draw[bmod] (0, 0) -- (0, .25)--(0.25,.5)--(0.25,.75);
					\draw[bmod] (0.5, 0) -- (.5, .25)--(.25,.5);
					\btoken{0.25,.5}{west}{m};
					\btoken{0,0}{east}{\eta};
				\end{tikzpicture} \ = \ 
				\begin{tikzpicture}[anchorbase]
					\draw[bmod] (0, 0) -- (0, .25)--(0.25,.5)--(0.25,.75);
					\draw[bmod] (0.5, 0) -- (.5, .25)--(.25,.5);
					\btoken{0.25,.5}{east}{m};
					\btoken{0.5,0}{west}{\eta};
				\end{tikzpicture} \ = \ 
				\begin{tikzpicture}[anchorbase]
					\draw[bmod] (0, 0) -- (0, .75);
				\end{tikzpicture} \ .
			\end{equation*}
\end{defi}
		
Moreover, the monoid $(A,m,\eta)$ in $\calC$ is called \emph{commutative} if the multiplication satisfies the following axiom:
		\begin{equation*}
			\begin{tikzpicture}[anchorbase]
				\draw[bmod] (0, 0) -- (0.5, .5)--(0.25,.75)--(0.25,1);
				\draw[bmod] (0.5, 0) -- (0, .5)--(.25,.75);
				\btoken{0.25,.75}{east}{m};
			\end{tikzpicture} \ = \ 
			\begin{tikzpicture}[anchorbase]
				\draw[bmod] (0, 0) -- (0, .5)--(0.25,.75)--(0.25,1);
				\draw[bmod] (0.5, 0) -- (0.5, .5)--(.25,.75);
				\btoken{0.25,.75}{west}{m};
			\end{tikzpicture} \ . 
		\end{equation*}
		
Given two monoids $(A,m,\eta)$ and $(A',m',\eta')$, a morphism $f:A \rightarrow A'$ is said to be a \emph{homomorphism of monoids} if it satisfies the following axioms:
		\begin{equation*}
			\begin{tikzpicture}[anchorbase]
				\draw[bmod] (0, 0) -- (0, .25)--(0.25,.5)--(0.25,1);
				\draw[bmod] (0.5, 0) -- (0.5, .25)--(.25,.5);
				\btoken{0.25,.5}{west}{m};
				\btoken{0.25,.75}{west}{f};
			\end{tikzpicture} \ = \ 
			\begin{tikzpicture}[anchorbase]
				\draw[bmod] (0, 0) -- (0, .5)--(0.25,.75)--(0.25,1);
				\draw[bmod] (0.5, 0) -- (0.5, .5)--(.25,.75);
				\btoken{0.25,.75}{west}{m'};
				\btoken{0,.25}{east}{f};
				\btoken{0.5,.25}{west}{f};
			\end{tikzpicture} \ \ \text{and} \ \ 
			\begin{tikzpicture}[anchorbase]
				\draw[bmod] (0, 0) -- (0, 1);
				\btoken{0,.75}{west}{f};
				\btoken{0,0}{west}{\eta};
			\end{tikzpicture} \ = \ 
			\begin{tikzpicture}[anchorbase]
				\draw[bmod] (0, 0) -- (0, 1);
				\btoken{0,0}{west}{\eta'};
			\end{tikzpicture} \ . 
		\end{equation*}
		
\begin{defi}
Let $(A,m,\eta)$ be a monoid in $\calC$.  The pair $(V, \beta)$ is called an \emph{$A$-module} if $V\in \ob(\calC)$ and $\beta:A\otimes V \rightarrow V$ is a morphism such that the following axioms are satisfied: 
			\begin{equation*}
				\begin{tikzpicture}[anchorbase]
					\draw[bmod] (0, 0) -- (0.5, .5)--(0.5,.75);
					\draw[bmod] (0.5, 0) -- (.25, .25);
					\draw[bmod] (0.75, 0) -- (.75, .25)--(.5,.5);
					\btoken{0.25, .25}{east}{m};
					\btoken{0.5, .5}{west}{\beta};
				\end{tikzpicture}\ = \ 
				\begin{tikzpicture}[anchorbase]
					\draw[bmod] (0, 0) -- (0, .25)--(0.25,.5)--(0.25,.75);
					\draw[bmod] (0.25, 0) -- (.5, .25)--(.25,.5);
					\draw[bmod] (0.75, 0) -- (.5, .25);
					\btoken{0.25,.5}{east}{\beta};
					\btoken{0.5,.25}{west}{\beta};
				\end{tikzpicture} \ \ \text{and} \ \ 
				\begin{tikzpicture}[anchorbase]
					\draw[bmod] (0, 0) -- (0, .25)--(0.25,.5)--(0.25,.75);
					\draw[bmod] (0.5, 0) -- (.5, .25)--(.25,.5);
					\btoken{0.25,.5}{west}{\beta};
					\btoken{0,0}{east}{\eta};
				\end{tikzpicture} \  = \ 
				\begin{tikzpicture}[anchorbase]
					\draw[bmod] (0, 0) -- (0, .75);
				\end{tikzpicture} \ .
			\end{equation*}
\end{defi}
		
Note that $(A,m)$ is an $A$-module. The following lemma is immediate.
\begin{lem}\label{tensorofamod}
Let $(A,m,\eta)$ be a monoid in $\calC$. Given an $A$-module $(M,\beta)$, then  $(V\otimes M, \beta')$ is an $A$-module for any $V\in \ob(\calC)$, where the morphism $\beta':A\otimes V\otimes M \rightarrow V\otimes M$ is defined by
			\begin{equation*}
				\beta' \ = \ 
				\begin{tikzpicture}[anchorbase]
					\draw[bmod] (0, 0) -- (0.25, .25)--(0.25,.5);
					\draw[bmod] (0.5, 0) -- (.25, .25);
					\draw[bmod] (0.25, 0) -- (0, .25)-- (0, .5);
					\btoken{0.25,0.25}{west}{\eta};
				\end{tikzpicture}\ .
			\end{equation*} 
			In particular, the pair $\big(V\otimes A, (\id_V\otimes m)\circ (c_{A,V}\otimes \id_A)\big)$ is an $A$-module for any $V\in \ob(\calC)$.
\end{lem}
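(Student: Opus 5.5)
The plan is to verify the two $A$-module axioms for $\beta'=(\id_V\otimes\beta)\circ(c_{A,V}\otimes\id_M)$ directly. I will use only three facts: the interchange law \eqref{defoftensor}, the hexagon identity \eqref{braid2}, and naturality of the braiding (the relations $(c_{V,W},f,g)$ from the Appendix). In diagrams, $\beta'$ is the $A$-strand crossing over the $V$-strand and then acting on $M$. Both axioms should reduce to the corresponding axioms for $(M,\beta)$ once every crossing has been slid below the action morphisms.

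\emph{Associativity.} I will show $\beta'\circ(m\otimes\id_{V\otimes M})=\beta'\circ(\id_A\otimes\beta')$.

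\begin{itemize}
\item[(i)] Left-hand side. Apply the naturality relation $(c_{A\otimes A,V},m,\id_V)$ to get $c_{A,V}\circ(m\otimes\id_V)=(\id_V\otimes m)\circ c_{A\otimes A,V}$. Then expand by \eqref{braid2}: $c_{A\otimes A,V}=(c_{A,V}\otimes\id_A)\circ(\id_A\otimes c_{A,V})$. The left-hand side becomes
\[
(\id_V\otimes\beta)\circ(\id_V\otimes m\otimes\id_M)\circ(c_{A,V}\otimes\id_{A\otimes M})\circ(\id_A\otimes c_{A,V}\otimes\id_M).
\]
\item[(ii)] Right-hand side. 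Use the interchange law (partial isotopy) to slide the inner $\id_V\otimes\beta$ above the outer crossing $c_{A,V}\otimes\id_M$. This is legitimate because the two morphisms act on disjoint strands. The right-hand side becomes
\[
(\id_V\otimes\beta)\circ(\id_{V\otimes A}\otimes\beta)\circ(c_{A,V}\otimes\id_{A\otimes M})\circ(\id_A\otimes c_{A,V}\otimes\id_M).
\]
\item[(iii)] The two expressions have the same braiding part, so they agree by the associativity axiom $\beta\circ(m\otimes\id_M)=\beta\circ(\id_A\otimes\beta)$ for $(M,\beta)$.
\end{itemize}

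\emph{Unit.} I will show $\beta'\circ(\eta\otimes\id_{V\otimes M})=\id_{V\otimes M}$. Naturality $(c_{I,V},\eta,\id_V)$ gives $c_{A,V}\circ(\eta\otimes\id_V)=(\id_V\otimes\eta)\circ c_{I,V}$. In the strict setting $c_{I,V}=\id_V$; this follows from \eqref{braid2} with $U=V=I$ and invertibility of the braiding. The unit axiom of $\beta$ then finishes the argument.

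\emph{The particular case.} Take $M=A$ and $\beta=m$. This is allowed because $(A,m)$ is an $A$-module by the monoid axioms.

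I do not expect a genuine obstacle. The only delicate point is the braiding bookkeeping in step (i), namely applying \eqref{braid2} in the correct order so that the crossing pattern matches the one produced in step (ii). The identity $c_{I,V}=\id_V$ used in the unit check is standard.
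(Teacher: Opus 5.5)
Your verification is correct and is the routine computation the paper has in mind when it calls this lemma ``immediate'' (the paper gives no written proof). Your associativity step uses the naturality relation $(c_{A\otimes A,V},m,\id_V)$, the identity \eqref{braid2} and the interchange law, and your unit step uses $c_{I,V}=\id_V$; these are the same identities the paper invokes in the proof of Lemma~\ref{main1}, and you rightly read the node in the diagram for $\beta'$ as $\beta$ (the label $\eta$ there is a typo), so that $\beta'=(\id_V\otimes\beta)\circ(c_{A,V}\otimes\id_M)$.
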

		
Assume that $(V, \beta)$ and $(V',\beta')$ are two $A$-modules. A morphism $\varphi:V \rightarrow V'$ is called a \emph{homomorphism of $A$-modules} if the following axiom is satisfied: 
		\begin{equation*}
			\begin{tikzpicture}[anchorbase]
				\draw[bmod] (0, 0) -- (0.25, .25)--(0.25,.75);
				\draw[bmod] (0.5, 0) -- (.25, .25);
				\btoken{0.25,0.5}{west}{\varphi};
				\btoken{0.25,0.25}{west}{\beta};
			\end{tikzpicture} \ = \ 
			\begin{tikzpicture}[anchorbase]
				\draw[bmod] (0, 0) -- (0, .25)--(0.25,.5)--(0.25,.75);
				\draw[bmod] (0.5, 0) -- (.5, .25)--(.25,.5);
				\btoken{0.5,0.25}{west}{\varphi};
				\btoken{0.25,0.5}{east}{\beta'};
			\end{tikzpicture}\ . 
		\end{equation*}
		
It is straightforward to verify that the composition of homomorphisms of $A$-modules remains a homomorphism. Consequently, all $A$-modules and their homomorphisms form a category {\bf Mod}$_\calC(A)$. 
		
\begin{defi}
The triple $(C,\Delta,\varepsilon)$ is called a \emph{comonoid} in $\calC$ if $C\in \ob(\calC)$, $\Delta: C \rightarrow C\otimes C$ and $\varepsilon: C \rightarrow I $ are morphisms such that the following axioms are satisfied:
			\begin{equation*}
				\begin{tikzpicture}[anchorbase]
					\draw[bmod] (0.5, 0)--(.5,.25) -- (0,.75);
					\draw[bmod] (0.25, 0.5) -- (.5, .75);
					\draw[bmod] (0.5, 0.25) -- (.75, .5)--(.75,.75);
					\btoken{0.5,0.25}{west}{\Delta};
					\btoken{0.25,0.5}{east}{\Delta};
				\end{tikzpicture} \ = \ 
				\begin{tikzpicture}[anchorbase]
					\draw[bmod] (0.25, 0)--(.25,.25) -- (0,.5)-- (0,.75);
					\draw[bmod] (0.25, 0.25) -- (.75, .75);
					\draw[bmod] (0.5, 0.5) -- (.25, .75);
					\btoken{0.25,0.25}{west}{\Delta};
					\btoken{0.5,0.5}{west}{\Delta};
				\end{tikzpicture} \ \ \text{and} \ \ 
				\begin{tikzpicture}[anchorbase]
					\draw[bmod] (0.25, 0)--(.25,.25) -- (0,.5)-- (0,.75);
					\draw[bmod] (0.25, 0.25) -- (.5, .5)-- (.5, .75);
					\btoken{0.25,0.25}{east}{\Delta};
					\btoken{0.5,0.75}{west}{\varepsilon};
				\end{tikzpicture} \ = \ 
				\begin{tikzpicture}[anchorbase]
					\draw[bmod] (0.25, 0)--(.25,.25) -- (0,.5)-- (0,.75);
					\draw[bmod] (0.25, 0.25) -- (.5, .5)-- (.5, .75);
					\btoken{0.25,0.25}{west}{\Delta};
					\btoken{0,0.75}{east}{\varepsilon};
				\end{tikzpicture} \ = \ 
				\begin{tikzpicture}[anchorbase]
					\draw[bmod] (0, 0)--(0,.75);
				\end{tikzpicture} \ . 
			\end{equation*}
\end{defi}
		
\begin{defi}
The tuple $(H,m,\eta,\Delta,\varepsilon)$ is called a bimonoid if $H\in \ob(\calC)$, $(H,m,\eta)$ is a monoid and $(H,\Delta,\varepsilon)$ is a comonoid such that the following axioms are satisfied:
			\begin{equation*}
				\begin{tikzpicture}[anchorbase]
					\draw[bmod] (0, 0)--(.25,.25) -- (0.25,.75)-- (0,1);
					\draw[bmod] (0.25, 0.75) -- (.5, 1);
					\draw[bmod] (0.5, 0) -- (.25, .25);
					\btoken{0.25,0.75}{west}{\Delta};
					\btoken{0.25,0.25}{east}{m};
				\end{tikzpicture}\ = \
				\begin{tikzpicture}[anchorbase]
					\draw[bmod] (0.25, 0)--(.25,.25) -- (0,.5)-- (0.25,.75)--(.25,1);
					\draw[bmod] (0.25, 0.25) -- (.75, .75)--(.75,1);
					\draw[bmod] (0.75, 0) -- (.75, .25)--(.25,.75);
					\draw[bmod] (0.75, 0.25) -- (1, .5)--(.75,.75);
					\btoken{0.25,0.25}{east}{\Delta};
					\btoken{0.75,0.25}{west}{\Delta};
					\btoken{0.25,0.75}{east}{m};
					\btoken{0.75,0.75}{west}{m};
				\end{tikzpicture}\ , \ 
				\begin{tikzpicture}[anchorbase]
					\draw[bmod] (0.25, 0)--(.25,.75) -- (0,1)-- (0,1);
					\draw[bmod] (0.25, 0.75) -- (.5, 1);
					\btoken{0.25,0}{west}{\eta};
					\btoken{0.25,0.75}{east}{\Delta};
				\end{tikzpicture} \ = \ 
				\begin{tikzpicture}[anchorbase]
					\draw[bmod] (0, 0)--(0,1);
					\draw[bmod] (0.25, 0) -- (.25, 1);
					\btoken{0.25,0}{west}{\eta};
					\btoken{0,0}{east}{\eta};
				\end{tikzpicture} \ , \ 
				\begin{tikzpicture}[anchorbase]
					\draw[bmod] (0, 0)--(.25,.25) -- (0.25,1);
					\draw[bmod] (0.5, 0) -- (.25, .25);
					\btoken{0.25,1}{west}{\varepsilon};
					\btoken{0.25,.25}{east}{m};
				\end{tikzpicture} \ = \ 
				\begin{tikzpicture}[anchorbase]
					\draw[bmod] (0, 0)--(0,1);
					\draw[bmod] (0.25, 0) -- (.25, 1);
					\btoken{0.25,1}{west}{\varepsilon};
					\btoken{0,1}{east}{\varepsilon};
				\end{tikzpicture} \ \ \text{and} \ \ 
				\begin{tikzpicture}[anchorbase]
					\draw[bmod] (0, 0)--(0,1);
					\btoken{0,0}{west}{\eta};
					\btoken{0,1}{west}{\varepsilon};
				\end{tikzpicture} \ = \ \mathrm{id}_I \ . 
			\end{equation*}
\end{defi}
		
Let $(H,m,\eta,\Delta,\varepsilon)$ be a bimonoid in $\calC.$ As established in \cite{Maj93,Maj94}, the category {\bf Mod}$_{\calC}(H)$ of all left $H$-modules admits a monoidal structure. Explicitly, given $(M,\rho), (M',\rho')\in $ {\bf Mod}$_{\calC}(H)$, 
there is a morphism from $H\otimes (M\otimes M')$ to $M\otimes M'$ as follows:
		\begin{equation}\label{tensorofasso}
			\rho \boxtimes \rho' \ = \ 
			\begin{tikzpicture}[anchorbase]
				\draw[bmod] (0.25, 0)--(0.25,.25)--(0, 0.5)--(0.25,.75)--(0.25,1);
				\draw[bmod] (0.25, 0.25) -- (.75, .75)-- (.75, 1);
				\draw[bmod] (0.75, 0) -- (.75, .25)-- (.25, .75);
				\draw[bmod] (1, 0) -- (1, .5)-- (.75, .75);
				\btoken{0.25,.25}{west}{\Delta};
				\btoken{0.25,.75}{east}{\rho};
				\btoken{0.75,.75}{west}{\rho'};
			\end{tikzpicture}\ .
		\end{equation}
		
Then $(M\otimes M', \rho\boxtimes \rho')\in$ {\bf Mod}$_{\calC}(H)$. Moreover, the unit object is $(I,\varepsilon)$, where $\varepsilon: H \to I$ equips $I$ with the trivial module structure. 
		
Recall that a pair $(L, \mu)$ is called a \emph{Lie monoid} in $\calC$ if $L\in \ob(\calC)$ and $\mu : L\otimes L \rightarrow L$ is a morphism satisfying certain additional conditions. Furthermore, one may define not only homomorphisms of Lie monoids, but also modules over $L$ and homomorphisms between such $L$-modules.  For  precise definitions, we refer to Definitions  3.1 and 3.3 in \cite{Sam25}.
		
\begin{exam}\label{exoflie}
Let $(A,m,\eta)$ be a monoid in $\calC$. Then $(A,\mu_-)$ forms a new Lie monoid in $\calC$, where
			\begin{equation*}
				\mu_- \ = \ 
				\begin{tikzpicture}[anchorbase]
					\draw[bmod] (0, 0)--(0,.25)--(0.25, 0.5)--(0.25,.75);
					\draw[bmod] (0.5, 0) -- (.5, .25)-- (.25, .5);
					\btoken{0.25,.5}{west}{m};
				\end{tikzpicture} \ - \ 
				\begin{tikzpicture}[anchorbase]
					\draw[bmod] (0, 0)--(0.5,.25)--(0.25, 0.5)--(0.25,.75);
					\draw[bmod] (0.5, 0) -- (0, .25)-- (.25, .5);
					\btoken{0.25,.5}{west}{m};
				\end{tikzpicture}\ . 
			\end{equation*}
\end{exam}

\begin{rmk}
As noted in {\cite[Theorem 1.27]{AM10}}, the definition of Lie monoids can be formulated in any linear braided monoidal category, not necessarily symmetric. However, the above Example \ref{exoflie} fails at this level of generality. For this and other reasons, we restrict the consideration of Lie monoids to the context of linear symmetric monoidal categories. 
\end{rmk}
		
All $L$-modules form a category ${\bf Mod}_{\calC}(L)$, since a composition of homomorphisms is a homomorphism. By Proposition $3.23$ in \cite{Sam25}, the category {\bf Mod}$_{\calC}(L)$ forms a monoidal category.
For $(V, \psi), (W,\gamma)\in $ {\bf Mod}$_{\calC}(L)$, define
		\begin{equation}\label{tensoroflmodule}
			\psi \boxplus \gamma \ = \ 
			\begin{tikzpicture}[anchorbase]
				\draw[bmod] (0, 0)--(0.5,.25)--(0.25, 0.5)--(0.25,.75);
				\draw[bmod] (0.5, 0) -- (0, .25)-- (.25, .5);
				\btoken{0.25,.5}{west}{m};
			\end{tikzpicture}\ .
		\end{equation}
		
Then the pair $(V\otimes W, \psi \boxplus  \gamma) \in$  {\bf Mod}$_{\calC}(L)$.
The unit object is $(I,{\bf 0})$, where ${\bf 0}$ denotes the zero morphism.

\section{\bf Acknowledgments }
The authors are grateful to   N. Andruskiewitsch, I. Angiono, A. Makhlouf and A. Savage for useful discussions. The second author was supported by NSFC No. 12350710787.


\begin{thebibliography}{99}
	\bibitem{AM10} M. Aguiar, S. Mahajan, Monoidal functors, species and Hopf algebras, CRM Monogr. Ser. 29, American Mathematical Society, Providence, RI, 2010.
	
	\bibitem{Ben63} J. B\'enabou, Cat\'egories avec multiplication, 
	C. R. Acad. Sci. Paris,  {\bf 256} (1963), 1887--1890.
	
	\bibitem{BKS24} X. Bekaert, N. Kowalzig, P. Saracco, Universal enveloping algebras of Lie-Rinehart algebras: crossed products, connections, and curvature, Lett. Math. Phys.,  {\bf 114}  (2024),  73 pp.
	
	
	
	
	
	
	\bibitem{BEMS} S.  Bouarroudj, Q.  Ehret, A. Makhlouf, N. Shyntas, The Superization of Hochschild's Lemma and Restricted Lie-Rinehart Superalgebras, arXiv:2511.18372. 
	
	\bibitem{CLP04}  J. M. Casas, M.  Ladra, T. Pirashvili, Crossed modules for Lie-Rinehart algebras,
	J. Algebra, {\bf 274} (2004),  192--201.
	
	\bibitem{Che95} S. Chemla, Operations for modules on Lie-Rinehart superalgebras, Manuscripta Math., {\bf 87} (1995), 199--223.
	
	\bibitem{EGNO15} P. Etingof, S. Gelaki, D. Nikshych, and V. Ostrik, Tensor categories, Math. Surveys Monogr. 205, American Mathematical Society, Providence, RI, 2015.
	
	
	\bibitem{Hue90} J. Huebschmann, Poisson cohomology and quantization, J. Reine Angew. Math., {\bf 408} (1990), 57--113.
	
	\bibitem{Hue04} J. Huebschmann, Lie-Rinehart algebras, descent, and quantization, in Galois Theory, Hopf Algebras, and Semiabelian Categories, vol. 43 of Fields Inst. Commun., Amer. Math. Soc., Providence, RI, 2004, 295--316.
	
	\bibitem{Hue21} J. Huebschmann, On the history of Lie brackets, crossed modules, and Lie-Rinehart algebras,
	J. Geom. Mech., {\bf 13} (2021), 385--402.
	
	
	
	\bibitem{JS93} A. Joyal, R. Street, Braided tensor categories, Adv. Math., {\bf 102} (1993), 20--78.
	
	\bibitem{Kas95} C. Kassel, Quantum groups, Graduate Texts in Mathematics 155, Springer, New York, 1995.
	\bibitem{KT95} C. Kassel,  V. Turaev,  Double construction for monoidal categories, Acta Math., {\bf 175} (1995),  1--48.	
	
	\bibitem{Kel06} B. Keller, On differential graded categories, 151--190,  International Congress of Mathematicians II, Z\"urich: European Mathematical Society, 2006.
	
	\bibitem{Kel64} G. M. Kelly, On MacLane's conditions for coherence of natural associativities, commutativities, etc, J. Algebra, {\bf 1} (1964), 397--402.
	
	
	
	\bibitem{Lar92} T. A. Larsson, Conformal fields: A class of representations of $\mathrm{Vect}(N)$, Int. J. Mod. Phys. A,  {\bf 7} (1992), 6493--6508. 
	
	\bibitem{LX23} R. L\"{u}, Y. Xue, Bounded weight modules over the Lie superalgebra of Cartan $W$-type, Algebr. Represent. Theory, {\bf 26} (2023),  763--781.
	
	
	
	
	
	\bibitem{Mac63} S. MacLane, Natural associativity and commutativity, Rice Univ. Stud., {\bf 49} (1963), 28--46.
	
	\bibitem{Mac98} S. MacLane,  \textit{Category Theory for the Working Mathematician},  Springer-Verlag, New York, second edition, 1998.
	
	\bibitem{Maj93} S. Majid, Braided groups, J. Pure Appl. Algebra, {\bf 86} (1993), 187--221.
	
	\bibitem{Maj94} S. Majid, Algebras and Hopf algebras in braided categories, In: ``Advances in Hopf Algebras", Marcel Dekker Lecture Notes in Pure and Applied Mathematics, vol. 158, pp. 55--105 (1994).
	
	\bibitem{PSTZ23} Y. Pei,  Y.  Sheng,  R. Tang, K.  Zhao,  Actions of monoidal categories and representations of Cartan type Lie algebras, 
	J. Inst. Math. Jussieu, {\bf 22}  (2023),  2367--2402.
	
	\bibitem{Rin63} G. S. Rinehart, Differential forms on general commutative algebras, Trans. Amer. Math. Soc., {\bf 108} (1963), 195--222.
	
	\bibitem{Sam25} S. Samchuck-Schnarch, Towards interpolating categories for equivariant map algebras, arXiv: 2504.21163v1. 
	
	\bibitem{Sar22} P. Saracco, Universal enveloping algebras of Lie-Rinehart algebras as a left adjoint functor, Mediterr. J. Math., {\bf 19} (2022), 19 pp. 
	
	\bibitem{She86} G. Shen, Graded modules of graded Lie algebras of Cartan type. I. Mixed products of modules, Sci. Sinica Ser. A, {\bf 29} (1986), 570--581. 
	
	
	\bibitem{TV17} V. Turaev, A. Virelizier, Monoidal Categories and Topological Field Theory, Progress in Mathematics, vol. 322, Springer, Cham, 2017.
	
\end{thebibliography}
	\end{document}